\documentclass{amsart}
\usepackage{amscd,amsfonts,amsmath,amssymb,amsthm,curves,enumerate,graphicx,latexsym,multibox}
\usepackage{bbm}
\usepackage{mathrsfs}
\usepackage[mathscr]{eucal}
\usepackage{epsfig,epsf,xypic,epic}

\usepackage[T1]{fontenc}
\usepackage[sc]{mathpazo}
%\linespread{1.05}

\newtheorem{thm}{Theorem}[section]
\newtheorem{lem}{Lemma}[section]

\newtheorem{prop}{Proposition}[section]
\newtheorem{defn}{Definition}[section]
\newtheorem{nb}{Remark}[section]

\numberwithin{equation}{section}

\begin{document}

\title[Mirror symmetry for toric Fano manifolds via SYZ]
{Mirror symmetry for toric Fano manifolds via SYZ transformations}
\author[K.-W. Chan]{Kwokwai Chan}
\address{The Institute of Mathematical Sciences and Department of Mathematics,
The Chinese University of Hong Kong, Shatin, Hong Kong}
\address{\textit{Current address: Department of Mathematics, Harvard
University, 1 Oxford Street, Cambridge, MA 02138, USA}}
\email{kwchan@math.harvard.edu}
\author[N.-C. Leung]{Naichung Conan Leung}
\address{The Institute of Mathematical Sciences and Department of Mathematics,
The Chinese University of Hong Kong, Shatin, Hong Kong}
\email{leung@ims.cuhk.edu.hk}

\begin{abstract}
We construct and apply Strominger-Yau-Zaslow mirror transformations
to understand the geometry of the mirror symmetry between toric Fano
manifolds and Landau-Ginzburg models.
\end{abstract}

\maketitle

\tableofcontents

\section{Introduction}

Mirror symmetry has been extended to the non-Calabi-Yau setting,
notably to Fano manifolds, by the works of
Givental~\cite{Givental94},~\cite{Givental96},~\cite{Givental97a},
Kontsevich~\cite{Kontsevich98} and Hori-Vafa~\cite{HV00}. If
$\bar{X}$ is a Fano manifold, then its mirror is conjectured to be a
pair $(Y,W)$, where $Y$ is a non-compact K\"{a}hler manifold and
$W:Y\rightarrow\mathbb{C}$ is a holomorphic Morse function. In the
physics literature, the pair $(Y,W)$ is called a
\emph{Landau-Ginzburg model}, and $W$ is called the
\emph{superpotential} of the model. One of the very first
mathematical predictions of this mirror symmetry is that there
should be an isomorphism between the small quantum cohomology ring
$QH^*(\bar{X})$ of $\bar{X}$ and the Jacobian ring $Jac(W)$ of the
function $W$. This has been verified (at least) for toric Fano
manifolds by the works of Batyrev~\cite{Batyrev93},
Givental~\cite{Givental97a} and many others. A version of the
\emph{Homological Mirror Symmetry Conjecture} has also been
formulated by Kontsevich~\cite{Kontsevich98}, which again has been
checked in many
cases~\cite{Seidel00},~\cite{Ueda04},~\cite{AKO04},~\cite{AKO05},
~\cite{Abouzaid05},~\cite{Abouzaid06}. However, no direct geometric
explanation for the mirror symmetry phenomenon for Fano manifolds
had been given, until the works of Cho-Oh \cite{CO03}, which showed
that, when $\bar{X}$ is a toric Fano manifold, the superpotential
$W$ can be computed in terms of the counting of Maslov index two
holomorphic discs in $\bar{X}$ with boundary in Lagrangian torus
fibers.

On the other hand, the celebrated \emph{Strominger-Yau-Zaslow (SYZ)
Conjecture}~\cite{SYZ96} suggested that mirror symmetry for
Calabi-Yau manifolds should be understood as a \emph{T-duality},
i.e. dualizing special Lagrangian torus fibrations, modified with
suitable quantum corrections. This will explain the geometry
underlying mirror symmetry \cite{Morrison96}. Recently, Gross and
Siebert~\cite{GS07} made a breakthrough in the study of this
conjecture, after earlier works of Fukaya~\cite{Fukaya02} and
Kontsevich-Soibelman~\cite{KS04}. It is expected that their program
will finally provide a very explicit and geometric way to see how
mirror symmetry works for \textit{both} Calabi-Yau and
non-Calabi-Yau manifolds (more precisely, for varieties with
effective anticanonical class). On the other hand, in
\cite{Auroux07}, Auroux started his program which is aimed at
understanding mirror symmetry in the non-Calabi-Yau setting by
applying the SYZ approach. More precisely, he studied the mirror
symmetry between a general compact K\"{a}hler manifold equipped with
an anticanonical divisor and a Landau-Ginzburg model, and
investigated how the superpotential can be computed in terms
holomorphic discs counting on the compact K\"{a}hler manifold. In
particular, this includes the mirror symmetry for toric Fano
manifolds as a special case.\\

In this paper, we shall again follow the SYZ philosophy and study
the mirror symmetry phenomenon for toric Fano manifolds by using
T-duality. The main point of this work, which is also the crucial
difference between this and previous works, is that, explicit
transformations, which we call \textit{SYZ mirror transformations},
are constructed and used to understand the results (e.g.
$QH^*(\bar{X})\cong Jac(W)$) implied by mirror symmetry. From this
perspective, this paper may be regarded as a sequel to the second
author's work \cite{Leung00}, where semi-flat SYZ mirror
transformations (i.e. fiberwise real Fourier-Mukai transforms) were
used to study mirror symmetry for semi-flat Calabi-Yau manifolds.
While in that case, quantum corrections do not arise because the
Lagrangian torus fibrations are smooth (i.e. they are fiber
bundles), we will have to deal with quantum corrections in the toric
Fano case.

However, we shall emphasize that the quantum corrections which arise
in the toric Fano case are only due to contributions from the
anticanonical toric divisor (the toric boundary); correspondingly,
the Lagrangian torus fibrations do \textit{not} have proper singular
fibers (i.e. singular fibers which are contained in the complement
of the anticanonical divisor), so that their bases are affine
manifolds with boundaries but \textit{without singularities}. This
is simpler than the general non-Calabi-Yau case treated by
Gross-Siebert~\cite{GS07} and Auroux~\cite{Auroux07}, where further
quantum corrections could arise, due to the fact that general
Lagrangian torus fibrations do admit proper singular fibers, so that
their bases are affine manifolds with \textit{both} boundaries and
singularities. Hence, the toric Fano case is in-between the
semi-flat case, which corresponds to nonsingular affine manifolds
without boundary, and the general case. In particular, in the toric
Fano case, we do not need to worry about wall-crossing phenomena,
and this is one of the reasons why we can construct the SYZ mirror
transformations explicitly as fiberwise Fourier-type transforms,
much like what was done in the semi-flat case \cite{Leung00}.
(Another major reason is that holomorphic discs in toric manifolds
with boundary in Lagrangian torus fibers are completely classified
by Cho-Oh \cite{CO03}.) It is interesting to generalize the results
here to non-toric settings, but, certainly, much work needs to be
done before we can see how SYZ mirror transformations are
constructed and used in the general case. For more detailed
discussions of mirror symmetry and the wall-crossing phenomena in
non-toric situations , we refer the reader to the works of
Gross-Siebert~\cite{GS07} and Auroux~\cite{Auroux07}.\\

What follows is an outline of our main results. We will focus on one
half of the mirror symmetry between a complex $n$-dimensional toric
Fano manifold $\bar{X}$ and the mirror Landau-Ginzburg model
$(Y,W)$, namely, the correspondence between the symplectic geometry
(A-model) of $\bar{X}$ and the complex geometry (B-model) of
$(Y,W)$.

To describe our results, let us fix some notations first. Let
$N\cong\mathbb{Z}^n$ be a lattice and
$M=N^\vee=\textrm{Hom}(N,\mathbb{Z})$ the dual lattice. Also let
$N_\mathbb{R}=N\otimes_\mathbb{Z}\mathbb{R}$,
$M_\mathbb{R}=M\otimes_\mathbb{Z}\mathbb{R}$, and denote by
$\langle\cdot,\cdot\rangle:M_\mathbb{R}\times
N_\mathbb{R}\rightarrow\mathbb{R}$ the dual pairing. Let $\bar{X}$
be a toric Fano manifold, i.e. a smooth projective toric variety
such that the anticanonical line bundle $K_{\bar{X}}$ is ample. Let
$v_1,\ldots,v_d\in N$ be the primitive generators of the
1-dimensional cones of the fan $\Sigma$ defining $\bar{X}$. Then a
polytope $\bar{P}\subset M_\mathbb{R}$ defined by the inequalities
$$\langle x,v_i\rangle\geq\lambda_i,\quad i=1,\ldots,d,$$
and with normal fan $\Sigma$, associates a K\"{a}hler structure
$\omega_{\bar{X}}$ to $\bar{X}$. Physicists \cite{HV00} predicted
that the mirror of ($\bar{X}$, $\omega_{\bar{X}}$) is given by the
pair $(Y,W)$, where $Y$, which we call \emph{Hori-Vafa's mirror
manifold}, is biholomorphic to the non-compact K\"{a}hler manifold
$(\mathbb{C}^*)^n$, and $W:Y\rightarrow\mathbb{C}$ is the Laurent
polynomial
$$e^{\lambda_1}z^{v_1}+\ldots+e^{\lambda_d}z^{v_d},$$
where $z_1,\ldots,z_n$ are the standard complex coordinates of
$Y\cong(\mathbb{C}^*)^n$ and $z^v$ denotes the monomial
$z_1^{v^1}\ldots z_n^{v^n}$ if $v=(v^1,\ldots,v^n)\in
N\cong\mathbb{Z}^n$.

The symplectic manifold $(\bar{X},\omega_{\bar{X}})$ admits a
Hamiltonian action by the torus $T_N=N_\mathbb{R}/N$, and the
corresponding moment map $\mu_{\bar{X}}:\bar{X}\rightarrow\bar{P}$
is naturally a Lagrangian torus fibration. While this fibration is
singular (with collapsed fibers) along $\partial\bar{P}$, the
restriction to the open dense $T_N$-orbit $X\subset\bar{X}$ is a
Lagrangian torus bundle
$$\mu=\mu_{\bar{X}}|_X:X\rightarrow P,$$ where
$P=\bar{P}\setminus\partial\bar{P}$ is the interior of the polytope
$\bar{P}$.\footnote{$\mu:X\rightarrow P$ is a special Lagrangian
fibration if we equip $X$ with the standard holomorphic volume form
on $(\mathbb{C}^*)^n$, so that $X$ becomes an almost Calabi-Yau
manifold. See Definition 2.1 and Lemma 4.1 in Auroux
\cite{Auroux07}.} Applying T-duality and the \emph{semi-flat SYZ
mirror transformation} (see Definition~\ref{def3.1}) to this torus
bundle, we can, as suggested by the SYZ philosophy, obtain the
mirror manifold $Y$ (see Proposition~\ref{prop3.1} and
Proposition~\ref{prop3.2}).\footnote{In fact, we prove that
T-duality gives a bounded domain in the Hori-Vafa mirror manifold.
This result also appeared in Auroux's paper (\cite{Auroux07},
Proposition 4.2).} However, we are not going to get the
superpotential $W:Y\rightarrow\mathbb{C}$ because we have ignored
the anticanonical toric divisor $D_\infty=\bigcup_{i=1}^d
D_i=\bar{X}\setminus X$, and hence quantum corrections. Here, for
$i=1,\ldots,d$, $D_i$ denotes the toric prime divisor which
corresponds to $v_i\in N$. To recapture the quantum corrections, we
consider the (trivial) $\mathbb{Z}^n$-cover
$$\pi:LX=X\times N\rightarrow X$$
and various functions on it.\footnote{In the expository paper
\cite{CL08}, we interpreted LX as a finite dimensional subspace of
the free loop space $\mathcal{L}\bar{X}$ of $\bar{X}$.} Let
$\mathcal{K}(\bar{X})\subset H^2(\bar{X},\mathbb{R})$ be the
K\"{a}hler cone of $\bar{X}$. For each
$q=(q_1,\ldots,q_l)\in\mathcal{K}(\bar{X})$ (here
$l=d-n=\textrm{Picard number of $\bar{X}$}$), we define a
$T_N$-invariant function $\Phi_q:LX\rightarrow\mathbb{R}$ in terms
of the counting of Maslov index two holomorphic discs in $\bar{X}$
with boundary in Lagrangian torus fibers of $\mu:X\rightarrow P$
(see Definition~\ref{def2.1} and Remark~\ref{rmk2.1}). If we further
assume that $\bar{X}$ is a product of projective spaces, then this
family of functions $\{\Phi_q\}_{q\in\mathcal{K}(\bar{X})}\subset
C^\infty(LX)$ can be used to compute the small quantum cohomology
ring $QH^*(\bar{X})$ of $\bar{X}$ as follows (see Section~\ref{sec2}
for details).
\begin{prop}\label{prop1.1}$\mbox{}$
\begin{enumerate}
\item[1.] The logarithmic derivatives of $\Phi_q$, with respect to
$q_a$, for $a=1,\ldots,l$, are given by
$$q_a\frac{\partial\Phi_q}{\partial q_a}=\Phi_q\star\Psi_{n+a}.$$
Here, for each $i=1,\ldots,d$, the function
$\Psi_i:LX\rightarrow\mathbb{R}$ is defined, in terms of the
counting of Maslov index two holomorphic discs in $\bar{X}$ with
boundary in Lagrangian torus fibers which intersect the toric prime
divisor $D_i$ at an interior point (see the statement of
Proposition~\ref{prop2.1} and the subsequent discussion), and
$\star$ denotes a convolution product of functions on $LX$ with
respect to the lattice $N$.
\item[2.] We have a natural isomorphism of $\mathbb{C}$-algebras
\begin{equation}\label{isom1.1}
QH^*(\bar{X})\cong\mathbb{C}[\Psi_1^{\pm1},\ldots,\Psi_n^{\pm1}]/\mathcal{L},
\end{equation}
where $\mathbb{C}[\Psi_1^{\pm1},\ldots,\Psi_n^{\pm1}]$ is the
polynomial algebra generated by $\Psi_1^{\pm1},\ldots,\Psi_n^{\pm}$
with respect to the convolution product $\star$, and $\mathcal{L}$
is the ideal generated by linear relations that are defined by the
linear equivalence among the toric divisors $D_1,\ldots,D_d$,
provided that $\bar{X}$ is a product of projective spaces.
\end{enumerate}
\end{prop}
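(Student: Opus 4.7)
Both parts should follow from Cho-Oh's classification of Maslov index two holomorphic discs in $\bar{X}$ with boundary in a Lagrangian torus fiber $L_x = \mu^{-1}(x)$: such discs come in $d$ basic families $\beta_1, \ldots, \beta_d$, in bijection with the primitive generators $v_1, \ldots, v_d$, and a basic disc in class $\beta_i$ has boundary winding in direction $v_i \in N$. Under this picture one expects $\Psi_i \in C^\infty(LX)$ to be concentrated on the $N$-slice $\{v_i\}$, weighted by the exponential of minus the symplectic area of the basic disc, while $\Phi_q$ is essentially the $\star$-exponential of $\sum_i \Psi_i$, i.e.\ the generating function summing over multi-configurations of basic discs graded by their total winding class in $N$.

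For Part 1, I would write $\Phi_q(x,v)$ as a weighted sum over tuples $(k_1,\ldots,k_d) \in \mathbb{Z}_{\geq 0}^d$ with $\sum_i k_i v_i = v$, making the $q$-dependence explicit via Cho-Oh's area formula. After fixing a basis of $\mathrm{Pic}(\bar{X})$ so that $q_a$ is naturally paired with the divisor $D_{n+a}$, the operator $q_a\,\partial/\partial q_a$ extracts the multiplicity of basic discs of type $\beta_{n+a}$ in each configuration. Translating this back into $N$-convolution language yields exactly $\Phi_q \star \Psi_{n+a}$, and the combinatorial bookkeeping is the bulk of the verification.

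For Part 2, the linear equivalences $\sum_{i=1}^d \langle u, v_i\rangle D_i = 0$, $u \in M$, translate under $D_i \leftrightarrow \Psi_i$ into the relations generating $\mathcal{L}$, giving an a priori surjective map from the polynomial algebra onto the convolution subalgebra generated by $\Psi_1^{\pm 1},\ldots,\Psi_n^{\pm 1}$. To identify the quotient with $QH^*(\bar{X})$ when $\bar{X} = \mathbb{P}^{n_1} \times \cdots \times \mathbb{P}^{n_k}$, I would use the standard presentation $QH^*(\bar{X}) \cong \bigotimes_j \mathbb{C}[x_j]/(x_j^{n_j+1} - q_j)$ and verify, factor by factor, that the convolution of the $\Psi_i$'s attached to the $n_j+1$ generators of the $j$-th $\mathbb{P}^{n_j}$-factor equals $q_j$ times the unit of the convolution algebra.

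\textbf{Main obstacle.} The principal difficulty is precisely this last verification: one must show $\Psi_{i_1} \star \cdots \star \Psi_{i_{n_j+1}} = q_j \cdot \mathbf{1}$, which geometrically says that the $n_j+1$ basic discs along a $\mathbb{P}^{n_j}$-factor ``close up'' to the positive generator of $H_2(\mathbb{P}^{n_j})$ and thereby contribute the Novikov parameter $q_j$. This is exactly where the product-of-projective-spaces hypothesis is used, since for a general toric Fano $\bar{X}$ Batyrev's quantum Stanley-Reisner relations carry non-linear quantum corrections not captured by $\mathcal{L}$; handling those corrections would require replacing $\mathcal{L}$ by the full quantum cohomology ideal and lies outside the scope of this proposition.
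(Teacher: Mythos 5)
Your outline for part 1 coincides with the paper's proof of Proposition~\ref{prop2.1}: using Cho-Oh's area formula (\ref{area}) one writes $e^{\langle x,v\rangle}\Phi_q(p,v)$ as the sum of $q_1^{k_{n+1}}\cdots q_l^{k_d}/(k_1!\cdots k_d!)$ over tuples with $\sum_i k_iv_i=v$, and applying $q_a\,\partial/\partial q_a$ produces exactly the index shift $v\mapsto v-v_{n+a}$ weighted by $q_ae^{-\langle x,v_{n+a}\rangle}=\Psi_{n+a}(p,v_{n+a})$, i.e.\ convolution with $\Psi_{n+a}$. This part of your plan is sound as stated.

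For part 2 your route differs from the paper's in one respect, and you have the location of the difficulty inverted. You import the presentation $QH^*(\bar{X})\cong\bigotimes_j\mathbb{C}[x_j]/(x_j^{n_j+1}-q_j)$ from the literature, whereas the paper derives it from scratch (Lemma~\ref{lem2.2}): it classifies the primitive collections of a product of projective spaces and computes $D_{1,a}\ast\cdots\ast D_{n_a+1,a}=q_a$ by a dimension count forcing the only contributing class to be the line class $\delta_a$, with Gromov-Witten invariant $1$. Citing the known presentation is legitimate but gives up the self-containedness the authors were after; what it buys you is brevity. On the other hand, the identity $\Psi_{1,a}\star\cdots\star\Psi_{n_a+1,a}=q_a\mathbb{1}$, which you single out as the main obstacle, is in fact a one-line consequence of Cho-Oh's area formula: since $\sum_j v_{j,a}=0$, the convolution is supported at $v=0$, where its value is $\exp\bigl(-\tfrac{1}{2\pi}\sum_j\int_{\beta_{j,a}}\omega_{\bar{X}}\bigr)=e^{\sum_j\lambda_{j,a}}=e^{-r_a}=q_a$. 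The geometric content you describe (the basic discs closing up to a line) is exactly the paper's Proposition~\ref{prop2.3}, but it plays no role in the algebraic verification. One step you should make explicit to finish: to upgrade the surjection onto the convolution algebra to an isomorphism of the quotient, you must check that the only relations among the $\Psi_i$ under $\star$ are those generated by the displayed ones; this holds because the monomials $\Psi^v$, $v\in N$, are supported on disjoint slices $X\times\{v\}$ of $LX$ and hence linearly independent, so $\mathbb{C}[\Psi_1^{\pm1},\ldots,\Psi_n^{\pm1}]$ is a genuine Laurent polynomial ring and the kernel of $D_i\mapsto\Psi_i$ is governed by the kernel $K$ of $\partial:\mathbb{Z}^d\rightarrow N$, which is precisely how the paper identifies it with $\mathcal{SR}_Q$.
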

The proof of the above isomorphism (\ref{isom1.1}) given in
Subsection~\ref{subsec2.1} will be combinatorial in nature and is
done by a simple computation of certain Gromov-Witten invariants.
While this result may follow easily from known results in the
literature, we choose to include an elementary proof to make this
paper more self-contained. Our proof relies on the assumption that
$\bar{X}$ is a product of projective spaces. However, the more
important reason for us to impose such a strong assumption is that,
when $\bar{X}$ is a product of projective spaces, there is a better
way to understand the geometry underlying the isomorphism
(\ref{isom1.1}) by using \textit{tropical geometry}. A brief
explanation is now in order. More details can be found in
Subsection~\ref{subsec2.2}.

Suppose that $\bar{X}$ is a product of projective spaces. We first
define a tropical analog of the small quantum cohomology ring of
$\bar{X}$, call it $QH^*_{trop}(\bar{X})$. The results of
Mikhalkin~\cite{Mikhalkin03} and Nishinou-Siebert~\cite{NS04}
provided a one-to-one correspondence between those holomorphic
curves in $\bar{X}$ which have contribution to the quantum product
in $QH^*(\bar{X})$ and those tropical curves in $N_\mathbb{R}$ which
have contribution to the tropical quantum product in
$QH^*_{trop}(\bar{X})$. From this follows the natural isomorphism
$$QH^*(\bar{X})\cong QH^*_{trop}(\bar{X}).$$
Next comes a simple but crucial observation: \textit{each tropical
curve which contributes to the tropical quantum product in
$QH^*_{trop}(\bar{X})$ can be obtained by gluing tropical
discs}.\footnote{More recently, Gross \cite{Gross09} generalized
this idea further to give a tropical interpretation of the
\textit{big} quantum cohomology of $\mathbb{P}^2$.} Now, making use
of the fundamental results of Cho and Oh~\cite{CO03} on the
classification of holomorphic discs in toric Fano manifolds, we get
a one-to-one correspondence between the relevant tropical discs and
the Maslov index two holomorphic discs in $\bar{X}$ with boundary in
Lagrangian torus fibers of $\mu:X\rightarrow P$. The latter were
used to define the functions $\Psi_i$'s. So we naturally have
another canonical isomorphism
$$QH^*_{trop}(\bar{X})\cong\mathbb{C}[\Psi_1^{\pm1},\ldots,\Psi_n^{\pm1}]/\mathcal{L}.$$

Hence, by factoring through the tropical quantum cohomology ring
$QH^*_{trop}(\bar{X})$ and using the correspondence between
symplectic geometry (holomorphic curves and discs) of $\bar{X}$ and
tropical geometry (tropical curves and discs) of $N_\mathbb{R}$, we
obtain a more conceptual and geometric understanding of the
isomorphism (\ref{isom1.1}). This is in line with the general
philosophy advocated in the Gross-Siebert program \cite{GS07}.

Notice that all these can only be done for products of projective
spaces because, as is well known, tropical geometry cannot be used
to count curves which have irreducible components mapping to the
toric boundary divisor, and if $\bar{X}$ is not a product of
projective spaces, those curves \textit{do} contribute to
$QH^*(\bar{X})$ (see Example 3 in Section~\ref{sec4}). This is the
main reason why we confine ourselves to the case of products of
projective spaces, although the isomorphism (\ref{isom1.1}) holds
for all toric Fano manifolds (see Remark~\ref{rmk2.3}).\\

Now we come to the upshot of this paper, namely, we can explicitly
construct and apply SYZ mirror transformations to understand the
mirror symmetry between $\bar{X}$ and $(Y,W)$. We shall define the
SYZ mirror transformation $\mathcal{F}$ for the toric Fano manifold
$\bar{X}$ as \emph{a combination of the semi-flat SYZ mirror
transformation and taking fiberwise Fourier series} (see
Definition~\ref{def3.2} for the precise definition). Our first
result says that the SYZ mirror transformation of $\Phi_q$ is
precisely the exponential of the superpotential $W$, i.e.
$\mathcal{F}(\Phi_q)=\exp(W)$. Then, by proving that the SYZ mirror
transformation $\mathcal{F}(\Psi_i)$ of the function $\Psi_i$ is
nothing but the monomial $e^{\lambda_i}z^{v_i}$, for $i=1,\ldots,d$,
we show that $\mathcal{F}$ exhibits a natural and canonical
isomorphism between the small quantum cohomology ring
$QH^*(\bar{X})$ and the Jacobian ring $Jac(W)$, which takes the
quantum product $\ast$ (which can now, by Proposition~\ref{prop1.1},
be realized as the convolution product $\star$) to the ordinary
product of Laurent polynomials, just as what classical Fourier
series do. This is our main result (see Section~\ref{sec3}):
\begin{thm}\label{main_thm}$\mbox{}$
\begin{enumerate}
\item[1.] The SYZ mirror transformation of the function
$\Phi_q\in C^\infty(LX)$, defined in terms of the counting of Maslov
index two holomorphic discs in $\bar{X}$ with boundary in Lagrangian
torus fibers, is the exponential of the superpotential $W$ on the
mirror manifold $Y$, i.e.
$$\mathcal{F}(\Phi_q)=e^W.$$
Furthermore, we can incorporate the symplectic structure
$\omega_X=\omega_{\bar{X}}|_X$ on $X$ to give the holomorphic volume
form on the Landau-Ginzburg model $(Y,W)$ through the SYZ mirror
transformation $\mathcal{F}$, in the sense that,
\begin{equation*}
\mathcal{F}(\Phi_q e^{\sqrt{-1}\omega_X})=e^W\Omega_Y.
\end{equation*}
\item[2.] The SYZ mirror transformation gives a canonical isomorphism
of $\mathbb{C}$-algebras
\begin{equation*}\label{isom1.2}
\mathcal{F}:QH^*(\bar{X})\overset{\cong}{\longrightarrow} Jac(W),
\end{equation*}
provided that $\bar{X}$ is a product of projective spaces.
\end{enumerate}
\end{thm}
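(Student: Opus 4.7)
\medskip

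The plan is to prove Part 1 first, then derive Part 2 by reducing the quantum product to convolution via Proposition~\ref{prop1.1} and converting convolution to pointwise product under $\mathcal{F}$.

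For Part 1, I would first unravel the definition of $\Phi_q$ in terms of the Cho--Oh classification. By the work of Cho--Oh, the Maslov index two holomorphic discs in $\bar{X}$ with boundary in a Lagrangian torus fiber of $\mu:X\to P$ fall into exactly $d$ families, one for each primitive generator $v_i\in N$; each family is parametrized by the boundary circle and its boundary winds around the fiber according to $v_i$, while its symplectic area is controlled by $\lambda_i$ and the moment map value. Thus the function $\Psi_i$ on $LX=X\times N$ should be supported (after the $T_N$-invariance) on the single lattice element $v_i\in N$, with a weight $e^{\lambda_i}$ times a semi-flat factor depending on the moment coordinate. Taking fiberwise Fourier series of a delta-supported function at $v_i\in N$ against the dual coordinates produces precisely the monomial $z^{v_i}$ with coefficient $e^{\lambda_i}$; hence $\mathcal{F}(\Psi_i)=e^{\lambda_i}z^{v_i}$. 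Now $\Phi_q$ should be a convolution-exponential built from the $\Psi_i$'s (each disc configuration is a disjoint union of individual discs), so that in the Fourier world the convolution $\star$-exponential becomes the ordinary exponential, giving $\mathcal{F}(\Phi_q)=\exp\bigl(\sum_i e^{\lambda_i}z^{v_i}\bigr)=e^W$. The enhancement with $e^{\sqrt{-1}\omega_X}$ follows because the semi-flat part of $\mathcal{F}$ (as in \cite{Leung00}) is designed precisely to intertwine $e^{\sqrt{-1}\omega_X}$ on the torus bundle $\mu:X\to P$ with the holomorphic volume form $\Omega_Y$ on the dual bundle $Y$; one checks this locally in action-angle coordinates, where both sides are Gaussian-like and match term-by-term.

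For Part 2, I would combine Proposition~\ref{prop1.1} with the fundamental property that Fourier transform takes convolution to pointwise product. By Proposition~\ref{prop1.1}, $QH^*(\bar{X})\cong\mathbb{C}[\Psi_1^{\pm1},\ldots,\Psi_n^{\pm1}]/\mathcal{L}$ as $\mathbb{C}$-algebras with respect to $\star$. Since $\mathcal{F}(\Psi_i)=e^{\lambda_i}z^{v_i}$ and $\mathcal{F}$ sends $\star$ to the ordinary product of Laurent polynomials, $\mathcal{F}$ induces an algebra isomorphism from $\mathbb{C}[\Psi_1^{\pm1},\ldots,\Psi_n^{\pm1}]$ onto $\mathbb{C}[z_1^{\pm1},\ldots,z_n^{\pm1}]$. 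It then remains to identify $\mathcal{F}(\mathcal{L})$ with the Jacobian ideal of $W$. The ideal $\mathcal{L}$ is generated by the linear relations coming from linear equivalences $\sum_i\langle m,v_i\rangle D_i\sim 0$ for $m\in M$; under $\mathcal{F}$ these become the relations $\sum_i\langle m,v_i\rangle e^{\lambda_i}z^{v_i}=0$, which, after multiplying by $z^m$, are precisely the logarithmic derivatives $z_j\partial W/\partial z_j$ generating $Jac(W)$. This gives the desired isomorphism $\mathcal{F}:QH^*(\bar{X})\xrightarrow{\cong}Jac(W)$.

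The main obstacle will be a clean and correct definition of $\Phi_q$ and the convolution structure on $LX$ so that the identity $\mathcal{F}(\Phi_q)=e^W$ is not merely a formal bookkeeping statement but reflects the genuine disc-counting content; in particular, one must verify that the contributions of the Maslov index two discs in the Cho--Oh classification, weighted by symplectic area and holonomy, assemble into a $\star$-exponential of the $\Psi_i$'s with exactly the coefficients $e^{\lambda_i}$. A secondary technical point is the global identification of the semi-flat Fourier--Mukai-type transform over the (boundaryless but bounded) base $P$ with the coordinates on $Y\cong(\mathbb{C}^*)^n$, which uses the affine structure coming from $\mu$ and matches $\lambda_i$ to the correct K\"ahler parameters; once this is set up, the rest of the argument is essentially a standard Fourier duality between $(N,\star)$ and $(\mathrm{Hom}(N,\mathbb{C}^*),\cdot)$.
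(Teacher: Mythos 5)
Your proposal is correct and follows essentially the same route as the paper: the identity $\mathcal{F}(\Phi_q)=e^W$ is obtained by recognizing $\Phi_q$ as the $\star$-exponential of $\Psi_1+\ldots+\Psi_d$ (the paper carries this out as a direct expansion of the fiberwise Fourier series into a product of exponential series), the $e^{\sqrt{-1}\omega_X}\mapsto\Omega_Y$ statement is exactly the semi-flat computation in action-angle coordinates, and Part 2 is deduced from Proposition~\ref{prop1.1} together with $\mathcal{F}(\Psi_i)=e^{\lambda_i}z^{v_i}$ and the matching of $\mathcal{L}$ with the ideal generated by the logarithmic derivatives $z_j\partial W/\partial z_j$. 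The only cosmetic discrepancy is your ``after multiplying by $z^m$'' in Part 2, which is unnecessary (and harmless, since $z^m$ is a unit): for $m$ the $j$-th dual basis vector, $\sum_i\langle m,v_i\rangle e^{\lambda_i}z^{v_i}$ is already $z_j\partial W/\partial z_j$.
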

\noindent Here we view $\Phi_q e^{\sqrt{-1}\omega_X}$ as the
\emph{symplectic structure corrected by Maslov index two holomorphic
discs},\footnote{In \cite{CL08}, we rewrote the function $\Phi_q$ as
$\exp(\Psi_1+\ldots+\Psi_d)$, so that $\Phi_q
e^{\sqrt{-1}\omega_X}=\exp(\sqrt{-1}\omega_X+\Psi_1+\ldots+\Psi_d)$
and the formula in part 1. of Theorem~\ref{main_thm} becomes
$\mathcal{F}(e^{\sqrt{-1}\omega_X+\Psi_1+\ldots+\Psi_d})=e^W\Omega_Y$.
May be it is more appropriate to call
$\sqrt{-1}\omega_X+\Psi_1+\ldots+\Psi_d\in\Omega^2(LX)\oplus\Omega^0(LX)$
the symplectic structure corrected by Maslox index two holomorphic
discs.} and $e^W\Omega_Y$ as the holomorphic volume form of the
Landau-Ginzburg model $(Y,W)$.

As mentioned at the beginning, the existence of an isomorphism
$QH^*(\bar{X})\cong Jac(W)$ is not a new result, and was established
before by the works of Batyrev~\cite{Batyrev93} and
Givental~\cite{Givental97a}. However, we shall emphasize that the
key point here is that there is an isomorphism which is realized by
an explicit Fourier-type transformation, namely, the SYZ mirror
transformation $\mathcal{F}$. This hopefully provides a more
conceptual understanding of what is going on.

In \cite{FOOO08a} (Section 5), Fukaya-Oh-Ohta-Ono studied the
isomorphism $QH^*(\bar{X})\cong Jac(W)$ from the point of view of
Lagrangian Floer theory. They worked over the Novikov ring, instead
of $\mathbb{C}$, and gave a proof (Theorem 1.9) of this isomorphism
(over the Novikov ring) for all toric Fano manifolds basing on
Batyrev's formulas for presentations of the small quantum cohomology
rings of toric manifolds and Givental's mirror theorem
\cite{Givental97a}. Their proof was also combinatorial in nature,
but they claimed that a geometric proof would appear in a sequel of
\cite{FOOO08a}. A brief history and a more detailed discussion of
the proof of the isomorphism were also contained in Remark 1.10 of
\cite{FOOO08a}. See also the sequel \cite{FOOO08b}.\\

The rest of this paper is organized as follows. In the next section,
we define the family of functions
$\{\Phi_q\}_{q\in\mathcal{K}(\bar{X})}$ in terms of the counting of
Maslov index two holomorphic discs and give a combinatorial proof
Proposition~\ref{prop1.1}, which is followed by a discussion of the
role played by tropical geometry. The heart of this paper is
Section~\ref{sec3}, where we construct explicitly the SYZ mirror
transformation $\mathcal{F}$ for a toric Fano manifold $\bar{X}$ and
show that it indeed transforms the symplectic structure of $\bar{X}$
to the complex structure of $(Y,W)$, and vice versa. This is the
first part of Theorem~\ref{main_thm}. We then move on to prove the
second part, which shows how the SYZ mirror transformation
$\mathcal{F}$ can realize the isomorphism $QH^*(\bar{X})\cong
Jac(W)$. Section~\ref{sec4} contains some examples. We conclude with
some discussions in the final section.

\section{Maslov index two holomorphic discs and $QH^*(\bar{X})$}\label{sec2}

In the first part of this section, we define the functions $\Phi_q$,
$q\in\mathcal{K}(\bar{X})$, and $\Psi_1,\ldots,\Psi_d$ on $LX$ in
terms of the counting of Maslov index two holomorphic discs in
$\bar{X}$ with boundary in Lagrangian torus fibers of the moment map
$\mu:X\rightarrow P$, and show how they can be used to compute the
small quantum cohomology ring $QH^*(\bar{X})$ in the case when
$\bar{X}$ is a product of projective spaces. In particular, we
demonstrate how the quantum product can be realized as a convolution
product (part 2. of Proposition~\ref{prop1.1}). In the second part,
we explain the geometry of these results by using tropical geometry.

\subsection{Computing $QH^*(\bar{X})$ in terms of functions on
$LX$}\label{subsec2.1}

Recall that the primitive generators of the 1-dimensional cones of
the fan $\Sigma$ defining the toric Fano manifold $\bar{X}$ are
denoted by $v_1,\ldots,v_d\in N$. Without loss of generality, we can
assume that $v_1=e_1,\ldots,v_n=e_n$ is the standard basis of
$N\cong\mathbb{Z}^n$. The map
$$\partial:\mathbb{Z}^d\rightarrow N,\ (k_1,\ldots,k_d)\mapsto\sum_{i=1}^d k_iv_i$$
is surjective since $\bar{X}$ is compact. Let $K$ be the kernel of
$\partial$, so that the sequence
\begin{equation}\label{seq2.1}
0\longrightarrow K\overset{\iota}{\longrightarrow}\mathbb{Z}^d
\overset{\partial}{\longrightarrow}N\longrightarrow0
\end{equation}
is exact (see, for example, Appendix 1 in the book of Guillemin
\cite{Guillemin94}).

Now consider the K\"{a}hler cone $\mathcal{K}(\bar{X})\subset
H^2(\bar{X},\mathbb{R})$ of $\bar{X}$, and let
$q_1,\ldots,q_l\in\mathbb{R}_{>0}$ ($l=d-n$) be the coordinates of
$\mathcal{K}(\bar{X})$. For each
$q=(q_1,\ldots,q_l)\in\mathcal{K}(\bar{X})$, we choose $\bar{P}$ to
be the polytope defined by
$$\bar{P}=\{x\in M_\mathbb{R}:\langle x,v_i\rangle\geq\lambda_i,\quad i=1,\ldots,d\}$$
with $\lambda_i=0$, for $i=1,\ldots,n$, and $\lambda_{n+a}=\log
q_a$, for $a=1,\ldots,l$. This associates a K\"{a}hler structure
$\omega_{\bar{X}}$ to $\bar{X}$.
\begin{nb}\label{rmk2.0}
Let $\bar{P}$ be the polytope defined by the inequalities
$$\langle x,v_i\rangle\geq\lambda_i,\quad i=1,\ldots,d.$$
Also let
$$Q_1=(Q_{11},\ldots,Q_{d1}),\ldots,Q_l=(Q_{1l},\ldots,Q_{dl})\in\mathbb{Z}^d$$
be a $\mathbb{Z}$-basis of $K$. Then the coordinates
$q=(q_1,\ldots,q_l)\in\mathcal{K}(\bar{X})$ of the K\"{a}hler cone
are given by $q_a=e^{-r_a}$, where
$$r_a=-\sum_{i=1}^d Q_{ia}\lambda_i,$$
for $a=1,\ldots,l$. Hence, different choices of the
$\mathbb{Z}$-basis of $K$ and the constants
$\lambda_1,\ldots,\lambda_d$ can give rise to the same K\"{a}hler
structure parametrized by $q\in\mathcal{K}(\bar{X})$. We choose the
$\mathbb{Z}$-basis $\{Q_1,\ldots,Q_l\}$ of $K$ such that
$(Q_{n+a,b})_{1\leq a,b\leq l}=\textrm{Id}_{l\times l}$, and the
constants $\lambda_1,\ldots,\lambda_d$ such that
$\lambda_1=\ldots=\lambda_n=0$.
\end{nb}
Recall that $\mu:X\rightarrow P$ is the restriction of the moment
map $\mu_{\bar{X}}:\bar{X}\rightarrow\bar{P}$ to the open dense
$T_N$-orbit $X\subset\bar{X}$, where $P$ is the interior of the
polytope $\bar{P}$. For a point $x\in P$, we let
$L_x=\mu^{-1}(x)\subset X$ be the Lagrangian torus fiber over $x$.
Then the groups $H_2(\bar{X},\mathbb{Z})$, $\pi_2(\bar{X},L_x)$ and
$\pi_1(L_x)$ can be identified canonically with $K$, $\mathbb{Z}^d$
and $N$ respectively, so that the exact sequence (\ref{seq2.1})
above coincides with the following exact sequence of homotopy groups
associated to the pair $(\bar{X},L_x)$:
$$0\longrightarrow H_2(\bar{X},\mathbb{Z})\overset{\iota}{\longrightarrow}
\pi_2(\bar{X},L_x)\overset{\partial}{\longrightarrow}\pi_1(L_x)\longrightarrow0.$$
To proceed, we shall recall some of the fundamental results of
Cho-Oh~\cite{CO03} on the classification of holomorphic discs in
$(\bar{X},L_x)$:
\begin{thm}[Theorem 5.2 and Theorem
8.1 in Cho-Oh~\cite{CO03}]\label{cho-oh} $\pi_2(\bar{X},L_x)$ is
generated by $d$ Maslov index two classes
$\beta_1,\ldots,\beta_d\in\pi_2(\bar{X},L_x)$, which are represented
by holomorphic discs with boundary in $L_x$. Moreover, given a point
$p\in L_x$, then, for each $i=1,\ldots,d$, there is a unique (up to
automorphism of the domain) Maslov index two holomorphic disc
$\varphi_i:(D^2,\partial D^2)\rightarrow(\bar{X},L_x)$ in the class
$\beta_i$ whose boundary passes through $p$,\footnote{Another way to
state this result: Let $\mathcal{M}_1(L_x,\beta_i)$ be the moduli
space of holomorphic discs in $(\bar{X},L_x)$ in the class $\beta_i$
and with one boundary marked point. In the toric Fano case,
$\mathcal{M}_1(L_x,\beta_i)$ is a smooth compact manifold of real
dimension $n$. Let $ev:\mathcal{M}_1(L_x,\beta_i)\rightarrow L_x$ be
the evaluation map at the boundary marked point. Then we have
$ev_*[\mathcal{M}_1(L_x,\beta_i)]=[L_x]$ as $n$-cycles in $L$. See
Cho-Oh \cite{CO03} and Auroux \cite{Auroux07} for details.} and the
symplectic area of $\varphi_i$ is given by
\begin{equation}\label{area}
\textrm{Area}(\varphi_i)=\int_{\beta_i}\omega_{\bar{X}}=\int_{D^2}\varphi_i^*\omega_{\bar{X}}=2\pi(\langle
x,v_i\rangle-\lambda_i).
\end{equation}
\end{thm}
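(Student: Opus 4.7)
The plan is to exploit the toric $T_N$-symmetry to reduce the classification of Maslov-index-two discs to combinatorial data of the fan $\Sigma$, organizing the argument around three steps: a Maslov-index formula, explicit construction via one-parameter subgroups, and a moduli-space argument for uniqueness, followed by a moment-map computation for the area.

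For the first step, since $L_x$ lies in the open orbit it is disjoint from the anticanonical divisor $D_\infty=\bigcup_i D_i$. The standard Maslov-index formula for discs with Lagrangian boundary disjoint from the anticanonical divisor (cf.\ Auroux \cite{Auroux07}) gives $\mu(u)=2\sum_{i=1}^d(u\cdot D_i)$ for any holomorphic $u:(D^2,\partial D^2)\to(\bar X,L_x)$, with $u\cdot D_i\geq 0$ by positivity of intersection. Hence $\mu(u)=2$ forces exactly one intersection number to be $1$, singling out $d$ relative classes $\beta_1,\ldots,\beta_d$ with $\partial\beta_i=v_i\in N=\pi_1(L_x)$. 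Combined with the identification of the homotopy exact sequence of $(\bar X,L_x)$ with (\ref{seq2.1}), this forces $\{\beta_i\}$ to generate $\pi_2(\bar X,L_x)$.

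Next, existence is obtained from the complex torus action. The cocharacter $v_i\in N$ defines a subgroup $\mathbb{C}^*\hookrightarrow T_{N_{\mathbb{C}}}$ whose orbits close up to $\mathbb{CP}^1$'s meeting $D_i$ transversally in a single point. For fixed $p\in L_x$, restricting the orbit through $p$ to an appropriately normalized unit disc yields an explicit holomorphic map $\varphi_i:(D^2,\partial D^2)\to(\bar X,L_x)$ with $\varphi_i(0)\in D_i$ and $\varphi_i(1)=p$, representing $\beta_i$. The area formula is then a direct moment-map computation: $\varphi_i(\partial D^2)$ traces an orbit of the circle subgroup $S^1\subset T_N$ generated by $v_i$, and applying Stokes' theorem together with the moment-map identity $d\langle\mu_{\bar X},v_i\rangle=\iota_{v_i^\#}\omega_{\bar X}$ gives $\int_{D^2}\varphi_i^*\omega_{\bar X}=2\pi\bigl(\langle x,v_i\rangle-\lambda_i\bigr)$, using that the facet $\{\langle\cdot,v_i\rangle=\lambda_i\}$ of $\bar P$ is the image of $D_i$.

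For uniqueness, I would analyze the moduli space $\mathcal{M}_1(L_x,\beta_i)$ of discs in class $\beta_i$ with one boundary marked point. Its virtual dimension is $n-3+\mu(\beta_i)+1=n$, matching $\dim L_x$. The $T_N$ action on $L_x$ lifts to a free action on $\mathcal{M}_1(L_x,\beta_i)$ by post-composition, and the construction above produces a representative in each $T_N$-orbit of $L_x$. Thus the evaluation map $ev:\mathcal{M}_1(L_x,\beta_i)\to L_x$ is $T_N$-equivariant and surjective; once the moduli space is known to be a smooth compact $n$-manifold, $ev$ must be a diffeomorphism, yielding uniqueness up to domain automorphism. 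The main obstacle is precisely this smoothness/compactness input: sphere bubbles are excluded on dimension grounds, while disc bubbles would split $\beta_i$ into components of Maslov indices $\geq 2$ summing to $2$, impossible unless one bubble is constant; Fano positivity of $c_1$ handles any residual degeneration. Smoothness amounts to surjectivity of the linearized $\bar\partial$-operator along each $\varphi_i$, which Cho-Oh establish by an explicit splitting of $\varphi_i^*T\bar X$ into line bundles of nonnegative degree compatible with the toric structure.
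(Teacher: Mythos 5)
The paper does not prove this statement --- it is imported verbatim from Cho--Oh \cite{CO03} (Theorems 5.2 and 8.1) and used as a black box --- so there is no in-paper proof to compare against; I will judge your sketch on its own terms. The existence, area, and Maslov-index parts are essentially right: the formula $\mu(\beta)=2\,\beta\cdot D_\infty$ plus positivity of intersections does single out the $d$ classes; the basic discs are indeed the suitably normalized orbits of the one-parameter subgroups $\lambda_{v_i}:\mathbb{C}^*\to T_{N_{\mathbb{C}}}$; and the moment-map/Stokes computation yields (\ref{area}) (indeed the area depends only on the relative class, since $\omega_{\bar X}$ vanishes on $L_x$). Two minor points: non-constant disc bubbles have Maslov index $\geq 2$ because a holomorphic disc with boundary on $L_x$ that avoids $D_\infty$ lands in $(\mathbb{C}^*)^n$ and is constant by the maximum principle --- that, not Fano positivity, is the relevant input there; and the generation claim is really the statement that $\beta\mapsto(\beta\cdot D_1,\ldots,\beta\cdot D_d)$ identifies $\pi_2(\bar X,L_x)$ with $\mathbb{Z}^d$ compatibly with (\ref{seq2.1}), with the $\beta_i$ the standard basis, which uses the discs you only construct in your second step, so the logical order needs rearranging.

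The genuine gap is in uniqueness. Freeness of the $T_N$-action on $\mathcal{M}_1(L_x,\beta_i)$, equivariance and surjectivity of $ev$, and smoothness/compactness of the moduli space only show that $\mathcal{M}_1(L_x,\beta_i)$ is a disjoint union of finitely many free $T_N$-orbits, each of dimension $n$, and hence that $ev$ is a covering of some degree $k\geq 1$; nothing in your argument forces $k=1$. To get $k=1$ you need either connectedness of $\mathcal{M}_1(L_x,\beta_i)$ or what Cho--Oh actually prove: a complete classification of holomorphic discs with boundary on $L_x$ (in homogeneous coordinates every such disc is a tuple of Blaschke products, a Maslov-index-two disc in class $\beta_i$ has a single degree-one factor in the $i$-th slot, and quotienting by $\mathrm{Aut}(D^2,1)$ leaves exactly one disc whose boundary passes through $p$). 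Without that classification, or an equivalent computation showing $ev_*[\mathcal{M}_1(L_x,\beta_i)]=[L_x]$ with multiplicity one, the "unique disc through $p$" claim --- which is precisely what the paper needs to set $n_i(p)=1$ in the definitions of $\Phi_q$ and $\Psi_i$ --- is not established.
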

Furthermore, for each $i=1,\ldots,d$, the disc $\varphi_i$
intersects the toric prime divisor $D_i$ at a unique interior point.
(We can in fact choose the parametrization of $\varphi_i$ so that
$\varphi_i(0)\in D_i$.) Indeed, a result of Cho-Oh (Theorem 5.1 in
\cite{CO03}) says that, the Maslov index of a holomorphic disc
$\varphi:(D^2,\partial D^2)\rightarrow(\bar{X},L_x)$ representing a
class $\beta\in\pi_2(\bar{X},L_x)$ is given by twice the algebraic
intersection number $\beta\cdot D_\infty$, where
$D_\infty=\bigcup_{i=1}^dD_i$ is the toric boundary divisor (see
also Auroux~\cite{Auroux07}, Lemma 3.1).

Let $LX$ be the product $X\times N$. We view $LX$ as a (trivial)
$\mathbb{Z}^n$-cover over $X$:
$$\pi:LX=X\times N\rightarrow X,$$
and we equip $LX$ with the symplectic structure $\pi^*(\omega_X)$,
so that it becomes a symplectic manifold. We are now in a position
to define $\Phi_q$.
\begin{defn}\label{def2.1}
Let $q=(q_1,\ldots,q_l)\in\mathcal{K}(\bar{X})$. The function
$\Phi_q:LX\rightarrow\mathbb{R}$ is defined as follows. For
$(p,v)\in LX=X\times N$, let $x=\mu(p)\in P$ and $L_x=\mu^{-1}(x)$
be the Lagrangian torus fiber containing $p$. Denote by
$$\pi_2^+(\bar{X},L_x)=\Big\{\sum_{i=1}^d
k_i\beta_i\in\pi_2(\bar{X},L_x):k_i\in\mathbb{Z}_{\geq0},\
i=1,\ldots,d\Big\}$$ the positive cone generated by the Maslov index
two classes $\beta_1,\ldots,\beta_d$ which are represented by
holomorphic discs with boundary in $L_x$. For $\beta=\sum_{i=1}^d
k_i\beta_i\in\pi_2^+(\bar{X},L_x)$, we denote by $w(\beta)$ the
number $k_1!\ldots k_d!$. Then set
$$\Phi_q(p,v)=\sum_{\beta\in\pi_2^+(\bar{X},L_x),\ \partial\beta=v}
\frac{1}{w(\beta)}e^{-\frac{1}{2\pi}\int_\beta\omega_{\bar{X}}}.$$
\end{defn}
\begin{nb}\label{rmk2.1}$\mbox{}$
\begin{enumerate}
\item[1.] We say that $\Phi_q$ is defined by the counting of Maslov
index two holomorphic discs because of the following: Let $(p,v)\in
LX, x=\mu(p)\in P, L_x\subset X$ and
$\beta_1,\ldots,\beta_d\in\pi_2(\bar{X},L_x)$ be as before. For
$i=1,\ldots,d$, let $n_i(p)$ be the (algebraic) number of Maslov
index two holomorphic discs $\varphi:(D^2,\partial
D^2)\rightarrow(\bar{X},L_x)$ in the class $\beta_i$ whose boundary
passes through $p$. This number is well-defined since $\bar{X}$ is
toric Fano (see Section 3.1 and Section 4 in Auroux
\cite{Auroux07}). Then we can re-define
$$\Phi_q(p,v)=\sum_{\beta\in\pi_2^+(\bar{X},L_x),\ \partial\beta=v}
\frac{n_\beta(p)}{w(\beta)}e^{-\frac{1}{2\pi}\int_\beta\omega_{\bar{X}}},$$
where $n_\beta(p)=n_1(p)^{k_1}\ldots n_d(p)^{k_d}$ if
$\beta=\sum_{i=1}^d k_i\beta_i$. Defining $\Phi_q$ in this way makes
it explicit that $\Phi_q$ carries enumerative meaning. By
Theorem~\ref{cho-oh}, we have $n_i(p)=1$, for all $i=1,\ldots,d$ and
for any $p\in X$. So this definition reduces to the one above.
\item[2.] By definition, $\Phi_q$ is invariant under the
$T_N$-action on $X\subset\bar{X}$. Since
$X=T^*P/N=P\times\sqrt{-1}T_N$ (and the moment map $\mu:X\rightarrow
P$ is nothing but the projection to the first factor), we may view
$\Phi_q$ as a function on $P\times N$.
\item[3.] The function $\Phi_q$ is well-defined, i.e. the infinite
sum in its definition converges. To see this, notice that, by the
symplectic area formula (\ref{area}) of Cho-Oh, we have
$$\Phi_q(p,v)=\Bigg(\sum_{\substack{k_1,\ldots,k_d\in\mathbb{Z}_{\geq0},\\ \sum_{i=1}^d k_iv_i=v}}
\frac{q_1^{k_{n+1}}\ldots q_l^{k_d}}{k_1!\ldots
k_d!}\Bigg)e^{-\langle x,v\rangle},$$ and the sum inside the big
parentheses is less than $e^{n+q_1+\ldots+q_l}$.
\end{enumerate}
\end{nb}
For $T_N$-invariant functions $f,g:LX\rightarrow\mathbb{R}$, we
define their \textit{convolution product} $f\star
g:LX\rightarrow\mathbb{R}$ by
$$(f\star g)(p,v)=\sum_{v_1,v_2\in N,\ v_1+v_2=v}f(p,v_1)g(p,v_2),$$
for $(p,v)\in LX$. As in the theory of Fourier analysis, for the
convolution $f\star g$ to be well-defined, we need some conditions
for both $f$ and $g$. We leave this to Subsection~\ref{subsec3.2}
(see Definition~\ref{def3.3} and the subsequent discussion).
Nevertheless, if one of the functions is nonzero only for finitely
many $v\in N$, then the sum in the definition of $\star$ is a finite
sum, so it is well-defined. This is the case in the following
proposition.
\begin{prop}\label{prop2.1}[=part 1. of Proposition~\ref{prop1.1}]
The logarithmic derivatives of $\Phi_q$, with respect to $q_a$ for
$a=1,\ldots,l$, are given by
$$q_a\frac{\partial\Phi_q}{\partial q_a}=\Phi_q\star\Psi_{n+a}$$
where $\Psi_i:LX\rightarrow\mathbb{R}$ is defined, for
$i=1,\ldots,d$, by
$$\Psi_i(p,v)=\left\{ \begin{array}{ll}
                      e^{-\frac{1}{2\pi}\int_{\beta_i}\omega_{\bar{X}}} & \textrm{if $v=v_i$}\\
                      0                                   & \textrm{if $v\neq v_i$,}
                      \end{array}\right.$$
for $(p,v)\in LX=X\times N$, and with $x=\mu(p)\in P$,
$L_x=\mu^{-1}(x)$ and $\beta_1,\ldots,\beta_d\in\pi_2(\bar{X},L_x)$
as before.
\end{prop}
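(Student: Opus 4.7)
The plan is to prove this by direct computation, starting from the explicit closed form for $\Phi_q$ given at the end of Remark 2.1. Using the Cho-Oh area formula (\ref{area}) together with the convention $\lambda_1=\cdots=\lambda_n=0$ and $\lambda_{n+a}=\log q_a$, the exponential factor in the definition of $\Phi_q$ becomes
\[
e^{-\tfrac{1}{2\pi}\int_{\beta}\omega_{\bar{X}}}=e^{-\langle x,v\rangle}\,q_1^{k_{n+1}}\cdots q_l^{k_d},
\]
whenever $\beta=\sum k_i\beta_i$ has boundary class $v=\sum k_i v_i$. So I would first record the identity
\[
\Phi_q(p,v)=e^{-\langle x,v\rangle}\sum_{\substack{k_1,\ldots,k_d\geq0\\ \sum k_iv_i=v}}\frac{q_1^{k_{n+1}}\cdots q_l^{k_d}}{k_1!\cdots k_d!},
\]
which is the one place that Theorem \ref{cho-oh} enters.

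Next I would apply $q_a\tfrac{\partial}{\partial q_a}$ term-by-term. Differentiation brings out a factor of $k_{n+a}$, which kills the $k_{n+a}=0$ terms and lets me reindex by $k'_{n+a}=k_{n+a}-1$. The constraint $\sum k_iv_i=v$ becomes $\sum k_iv_i+v_{n+a}=v$ (with $k_{n+a}$ replaced by $k'_{n+a}$), and $k_{n+a}!$ becomes $k'_{n+a}!$ after cancellation. This converts the sum into a copy of $\Phi_q$ evaluated at the shifted class $v-v_{n+a}$, up to easily tracked exponential factors:
\[
q_a\frac{\partial\Phi_q}{\partial q_a}(p,v)=q_a\,e^{-\langle x,v_{n+a}\rangle}\,\Phi_q(p,v-v_{n+a}).
\]

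Finally I would unwind the convolution side. By definition $\Psi_{n+a}(p,v_2)$ is supported on $v_2=v_{n+a}$, where it equals $e^{-\tfrac{1}{2\pi}\int_{\beta_{n+a}}\omega_{\bar X}}=e^{\lambda_{n+a}-\langle x,v_{n+a}\rangle}=q_a\,e^{-\langle x,v_{n+a}\rangle}$ by (\ref{area}). Hence
\[
(\Phi_q\star\Psi_{n+a})(p,v)=\Phi_q(p,v-v_{n+a})\cdot q_a\,e^{-\langle x,v_{n+a}\rangle},
\]
which matches the formula just derived. Note that $\Psi_{n+a}$ is supported at a single lattice point, so no convergence issue arises in the convolution, as mentioned before the proposition.

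There is no real obstacle here: the proposition is essentially a bookkeeping statement. The only subtlety worth flagging is the compatibility of the weighting factor $w(\beta)=k_1!\cdots k_d!$ in Definition \ref{def2.1} with the shift $k_{n+a}\mapsto k_{n+a}-1$, which is exactly what makes differentiation by $q_a$ produce a clean single-monomial multiplier rather than a more complicated expression. This combinatorial match is the precise reason why the quantum product will later be realized as a convolution.
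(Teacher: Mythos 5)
Your proposal is correct and follows essentially the same route as the paper's proof: write $\Phi_q(p,v)$ in the closed form $e^{-\langle x,v\rangle}\sum q_1^{k_{n+1}}\cdots q_l^{k_d}/(k_1!\cdots k_d!)$ via the Cho--Oh area formula, apply $q_a\partial/\partial q_a$, reindex $k_{n+a}\mapsto k_{n+a}-1$ to identify the result with $q_a e^{-\langle x,v_{n+a}\rangle}\Phi_q(p,v-v_{n+a})$, and match this against the single surviving term of the convolution with $\Psi_{n+a}$. The only cosmetic difference is that the paper carries out the computation for $a=l$ and notes the other cases are analogous, whereas you treat general $a$ directly.
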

\begin{proof}
We will compute $q_l\frac{\partial\Phi_q}{\partial q_l}$. The others
are similar. By using Cho-Oh's formula (\ref{area}) and our choice
of the polytope $\bar{P}$, we have
$$e^{\langle x,v\rangle}\Phi_q(p,v)=
\sum_{\substack{k_1,\ldots,k_d\in\mathbb{Z}_{\geq0},\\ \sum_{i=1}^d
k_iv_i=v}}\frac{q_1^{k_{n+1}}\ldots q_l^{k_d}}{k_1!\ldots k_d!}.$$
Note that the right-hand-side is independent of $p\in X$.
Differentiating both sides with respect to $q_l$ gives
\begin{eqnarray*}
e^{\langle x,v\rangle}\frac{\partial\Phi_q(p,v)}{\partial q_l} & = &
\sum_{\substack{k_1,\ldots,k_{d-1}\in\mathbb{Z}_{\geq0},\
k_d\in\mathbb{Z}_{\geq1},\\ \sum_{i=1}^d
k_iv_i=v}}\frac{q_1^{k_{n+1}}\ldots
q_{l-1}^{k_{d-1}}q_l^{k_d-1}}{k_1!\ldots k_{d-1}!(k_d-1)!}\\
& = & \sum_{\substack{k_1,\ldots,k_d\in\mathbb{Z}_{\geq0},\\
\sum_{i=1}^d k_iv_i=v-v_d}}\frac{q_1^{k_{n+1}}\ldots
q_l^{k_d}}{k_1!\ldots
k_d!}\\
& = & e^{\langle x,v-v_d\rangle}\Phi_q(p,v-v_d).
\end{eqnarray*}
Hence, we obtain
\begin{eqnarray*}
q_l\frac{\partial\Phi_q(p,v)}{\partial q_l}=q_l e^{-\langle
x,v_d\rangle}\Phi_q(p,v-v_d).
\end{eqnarray*}
Now, by the definition of the convolution product $\star$, we have
\begin{eqnarray*}
\Phi_q\star\Psi_d(p,v)=\sum_{v_1,v_2\in N,\
v_1+v_2=v}\Phi_q(p,v_1)\Psi_d(p,v_2)=\Phi_q(p,v-v_d)\Psi_d(p,v_d),
\end{eqnarray*}
and
$\Psi_d(p,v_d)=e^{-\frac{1}{2\pi}\int_{\beta_d}\omega_{\bar{X}}}=e^{\lambda_d-\langle
x,v_d\rangle}=q_le^{-\langle x,v_d\rangle}$. The result follows.
\end{proof}
In the previous proposition, we introduce the $T_N$-invariant
functions $\Psi_1,\ldots,\Psi_d\in C^\infty(LX)$. Similar to what
has been said in Remark~\ref{rmk2.1}(1), these functions carry
enumerative meanings, and we should have defined $\Psi_i(p,v)$,
$i=1,\ldots,d$ in terms of the counting of Maslov index two
holomorphic discs in $(\bar{X},L_{\mu(p)})$ with boundary $v$ which
pass through $p$, i.e.
$$\Psi_i(p,v)=\left\{ \begin{array}{ll}
                      n_i(p)e^{-\frac{1}{2\pi}\int_{\beta_i}\omega_{\bar{X}}} & \textrm{if $v=v_i$}\\
                      0                                   & \textrm{if $v\neq v_i$,}
                      \end{array}\right.$$
for $(p,v)\in LX=X\times N$, where $x=\mu(p)\in P,
L_x=\mu^{-1}(x)\subset X$ and
$\beta_1,\ldots,\beta_d\in\pi_2(\bar{X},L_x)$ are as before. Again,
since the number $n_i(p)$ is always equal to one, for any $p\in X$
and for all $i=1,\ldots,d$, this definition of $\Psi_i$ is the same
as the previous one. But we should keep in mind that the function
$\Psi_i\in C^\infty(LX)$ encodes the following enumerative
information: for each $p\in X$, there is a unique Maslov index two
holomorphic disc $\varphi_i$ in the class $\beta_i$ with boundary in
the Lagrangian torus fiber $L_{\mu(p)}$ whose boundary passes
through $p$ and whose interior intersects the toric prime divisor
$D_i$ at one point. In view of this, we put the $d$ functions
$\{\Psi_i\}_{i=1}^d$, the $d$ families of Maslov index two
holomorphic discs $\{\varphi_i\}_{i=1}^d$ and the $d$ toric prime
divisors $\{D_i\}_{i=1}^d$ in one-to-one correspondences:
\begin{equation}\label{1-1}
\{\Psi_i\}_{i=1}^d\ \overset{1-1}{\longleftrightarrow}\
\{\varphi_i\}_{i=1}^d\ \overset{1-1}{\longleftrightarrow}\
\{D_i\}_{i=1}^d.
\end{equation}
Through these correspondences, we introduce linear relations in the
$d$-dimensional $\mathbb{C}$-vector space spanned by the functions
$\Psi_1,\ldots,\Psi_d$ using the linear equivalences among the
divisors $D_1,\ldots,D_d$.
\begin{defn}
Two linear combinations $\sum_{i=1}^d a_i\Psi_i$ and $\sum_{i=1}^d
b_i\Psi_i$, where $a_i,b_i\in\mathbb{C}$, are said to be linearly
equivalent, denoted by $\sum_{i=1}^d a_i\Psi_i\sim\sum_{i=1}^d
b_i\Psi_i$, if the corresponding divisors $\sum_{i=1}^d a_iD_i$ and
$\sum_{i=1}^d b_iD_i$ are linearly equivalent.
\end{defn}
We further define $\Psi_i^{-1}:LX\rightarrow\mathbb{R}$,
$i=1,\ldots,d$, by
$$\Psi_i^{-1}(p,v)=\left\{ \begin{array}{ll}
                      e^{\frac{1}{2\pi}\int_{\beta_i}\omega_X} & \textrm{if $v=-v_i$}\\
                      0                          & \textrm{if $v\neq-v_i$,}
                            \end{array}\right.$$
for $(p,v)\in LX$, so that $\Psi_i^{-1}\star\Psi_i=\mathbb{1}$,
where $\mathbb{1}:LX\rightarrow\mathbb{R}$ is the function defined
by
$$\mathbb{1}(p,v)=\left\{ \begin{array}{ll}
                            1 & \textrm{if $v=0$}\\
                            0 & \textrm{if $v\neq 0$.}
                          \end{array}\right.$$
The function $\mathbb{1}$ serves as a multiplicative identity for
the convolution product, i.e. $\mathbb{1}\star f=f\star\mathbb{1}=f$
for any $f\in C^\infty(LX)$. Now the second part of
Proposition~\ref{prop1.1} says that
\begin{prop}[=part 2. of Proposition~\ref{prop1.1}]\label{prop2.2}
We have a natural isomorphism of $\mathbb{C}$-algebras
\begin{equation}\label{isom2.3}
QH^*(\bar{X})\cong\mathbb{C}[\Psi_1^{\pm1},\ldots,\Psi_n^{\pm1}]/\mathcal{L},
\end{equation}
where $\mathbb{C}[\Psi_1^{\pm1},\ldots,\Psi_n^{\pm1}]$ is the
polynomial algebra generated by $\Psi_1^{\pm1},\ldots,\Psi_n^{\pm1}$
with respect to the convolution product $\star$ and $\mathcal{L}$ is
the ideal generated by linear equivalences, provided that $\bar{X}$
is a product of projective spaces.
\end{prop}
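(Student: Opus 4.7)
The plan is to reduce the isomorphism to Batyrev's well-known presentation of $QH^*(\bar{X})$ for a toric Fano manifold (\cite{Batyrev93}) and match relations directly on the $\Psi$-side via the rule $D_i\mapsto\Psi_i$. Specifically, for $\bar{X}=\prod_{j=1}^l\mathbb{P}^{n_j}$, Batyrev's presentation takes the simple form
$$QH^*(\bar{X})\cong\mathbb{C}[D_1,\ldots,D_d][q_1^{\pm1},\ldots,q_l^{\pm1}]\big/\bigl(\mathrm{linear\ relations},\ \textstyle\prod_{i\in J_j}D_i-q_j\bigr),$$
where $J_j$ indexes the $n_j+1$ toric divisors belonging to the $j$-th projective space factor.

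The first step is a direct computation: using Cho-Oh's area formula (\ref{area}), the definition of $\star$, and the chosen basis $\{Q_a\}$ of $K$ from Remark~\ref{rmk2.0}, I would verify the key identity of functions on $LX$,
$$\Psi_{n+a}\ =\ q_a\,\Psi_1^{-Q_{1a}}\star\cdots\star\Psi_n^{-Q_{na}}\qquad(a=1,\ldots,l).$$
Both sides are supported at the single vector $v_{n+a}=-\sum_iQ_{ia}v_i\in N$, and the values at that vector agree because $\lambda_i=0$ for $i\leq n$ and $\lambda_{n+a}=\log q_a$. This identity allows me to define a $\mathbb{C}$-algebra homomorphism $\phi$ from the polynomial ring in the $D_i$'s (with $q_a$'s inverted) to $\mathbb{C}[\Psi_1^{\pm1},\ldots,\Psi_n^{\pm1}]$ by $D_i\mapsto\Psi_i$ for $i\leq n$ and $D_{n+a}\mapsto q_a\Psi_1^{-Q_{1a}}\star\cdots\star\Psi_n^{-Q_{na}}$.

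Next, I would check that $\phi$ sends every generator of Batyrev's ideal into $\mathcal{L}$. The linear relations $\sum_i\langle m,v_i\rangle D_i\sim0$ for $m\in M$ pass to their literal counterparts among the $\Psi_i$'s via the identity above, which is exactly the content of $\mathcal{L}$. For the quantum Stanley-Reisner relation $\prod_{i\in J_j}D_i=q_j$ of a given factor, the computation factorizes over $j$: the primitive vector identity $\sum_{i\in J_j}v_i=0$ makes $\prod_{i\in J_j}\Psi_i$ supported only at $0\in N$, and summing the $\lambda_i$'s over $J_j$ returns precisely $\log q_j$, yielding $\prod_{i\in J_j}\Psi_i=q_j\mathbb{1}$ in the convolution algebra. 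Thus $\phi$ descends to a surjective homomorphism from $QH^*(\bar{X})$ to the target ring.

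The main obstacle will be injectivity, i.e.\ showing that $\mathcal{L}$ is generated by no more than the image of Batyrev's ideal. I would handle this by exploiting the product structure: both sides satisfy Künneth, since $QH^*(\bar{X})=\bigotimes_jQH^*(\mathbb{P}^{n_j})$, and on the $\Psi$-side the linear relations within each $J_j$ identify all $\Psi_i$ (for $i\in J_j$) to a single generator $H_j$, reducing the factor to $\mathbb{C}[H_j^{\pm1}]/(H_j^{n_j+1}-q_j)$, which is exactly $QH^*(\mathbb{P}^{n_j})$. Matching factors then completes the proof. The product-of-projective-spaces hypothesis is essential here: it guarantees both the simple form of the quantum Stanley-Reisner relation and the clean elimination of the $\Psi_{n+a}$ via the linear relations.
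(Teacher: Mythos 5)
Your proposal is correct, and its core translation step coincides with the paper's: the identity $\Psi_{n+a}=q_a(\Psi_1^{-1})^{Q_{1a}}\star\cdots\star(\Psi_n^{-1})^{Q_{na}}$ derived from Cho--Oh's area formula (\ref{area}) and the normalizations of Remark~\ref{rmk2.0}, followed by the substitution $D_i\mapsto\Psi_i$ and the verification that $\prod_{i\in J_a}\Psi_i=q_a\mathbb{1}$ matches the quantum Stanley--Reisner relation. Where you genuinely diverge is in how the presentation of $QH^*(\bar{X})$ is obtained and in the injectivity argument. You quote Batyrev's presentation wholesale; the paper instead devotes Lemma~\ref{lem2.2} to an elementary derivation of $D_{1,a}\ast\cdots\ast D_{n_a+1,a}=q_a$, classifying the primitive collections, bounding $c_1(\bar{X})\cdot\delta$ via effectivity, dimension-counting $\overline{\mathcal{M}}_{0,n_a+2}(\bar{X},\delta)$, and using enumerativity of genus-zero Gromov--Witten invariants for homogeneous spaces --- precisely so as not to appeal to Givental's mirror theorem (see Remark~\ref{rmk2.3}). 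Your citation is legitimate for products of projective spaces, where the presentation already follows from $QH^*(\mathbb{P}^n)$ and the quantum K\"unneth formula, but it outsources the only genuinely geometric input of the paper's argument; what it buys is brevity, what it loses is the self-containedness the authors explicitly wanted. For injectivity you factor both sides via K\"unneth into $\mathbb{C}[H_j^{\pm1}]/(H_j^{n_j+1}-q_j)$, whereas the paper shows the kernel of $\rho:\mathbb{C}[D_1,\ldots,D_d]\to\mathbb{C}[\Psi_1,\ldots,\Psi_d]$ is exactly $\mathcal{SR}_Q$ and only afterwards quotients by $\mathcal{L}$; both routes work. One small point of hygiene: you state Batyrev's presentation over $\mathbb{C}[q_1^{\pm1},\ldots,q_l^{\pm1}]$, but the target $\mathbb{C}[\Psi_1^{\pm1},\ldots,\Psi_n^{\pm1}]/\mathcal{L}$ treats the $q_a$ as fixed positive numbers determined by the polytope, so you must specialize the quantum parameters before comparing the two $\mathbb{C}$-algebras.
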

In the rest of this subsection, we will give an elementary proof of
this proposition by simple combinatorial arguments and computation
of certain Gromov-Witten invariants.

First of all, each toric prime divisor $D_i$ ($i=1,\ldots,d$)
determines a cohomology class in $H^2(\bar{X},\mathbb{C})$, which
will be, by abuse of notations, also denoted by $D_i$. It is known
by the general theory of toric varieties that the cohomology ring
$H^*(\bar{X},\mathbb{C})$ of the compact toric manifold $\bar{X}$ is
generated by the classes $D_1,\ldots,D_d$ in
$H^2(\bar{X},\mathbb{C})$ (see, for example, Fulton \cite{Fulton93}
or Audin \cite{Audin04}). More precisely, there is a presentation of
the form:
$$H^*(\bar{X},\mathbb{C})=\mathbb{C}[D_1,\ldots,D_d]/(\mathcal{L}+\mathcal{SR}),$$
where $\mathcal{L}$ is the ideal generated by linear equivalences
and $\mathcal{SR}$ is the \textit{Stanley-Reisner ideal} generated
by \textit{primitive relations} (see Batyrev \cite{Batyrev91}). Now,
by a result of Siebert and Tian (Proposition 2.2 in \cite{ST94}),
$QH^*(\bar{X})$ is also generated by $D_1,\ldots,D_d$ and a
presentation of $QH^*(\bar{X})$ is given by replacing each relation
in $\mathcal{SR}$ by its quantum counterpart. Denote by
$\mathcal{SR}_Q$ the quantum Stanley-Reisner ideal. Then we can
rephrase what we said as:
$$QH^*(\bar{X})=\mathbb{C}[D_1,\ldots,D_d]/(\mathcal{L}+\mathcal{SR}_Q).$$
The computation of $QH^*(\bar{X})$ (as a presentation) therefore
reduces to computing the generators of the ideal $\mathcal{SR}_Q$.

Let $\bar{X}=\mathbb{C}P^{n_1}\times\ldots\times\mathbb{C}P^{n_l}$
be a product of projective spaces. The complex dimension of
$\bar{X}$ is $n=n_1+\ldots+n_l$. For $a=1,\ldots,l$, let
$v_{1,a}=e_1,\ldots,v_{n_a,a}=e_{n_a},v_{n_a+1,a}=-\sum_{j=1}^{n_a}e_j\in
N_a$ be the primitive generators of the 1-dimensional cones in the
fan of $\mathbb{C}P^{n_a}$, where $\{e_1,\ldots,e_{n_a}\}$ is the
standard basis of $N_a\cong\mathbb{Z}^{n_a}$. For
$j=1,\ldots,n_a+1$, $a=1,\ldots,l$, we use the same symbol $v_{j,a}$
to denote the vector
$$(0,\ldots,\underbrace{v_{j,a}}_{\textrm{$a$-th}},\ldots,0)
\in N=N_1\oplus\ldots\oplus N_l,$$ where $v_{j,a}$ sits in the $a$th
place. These $d=\sum_{a=1}^l(n_a+1)=n+l$ vectors in $N$ are the
primitive generators of the 1-dimensional cones of the fan $\Sigma$
defining $\bar{X}$. In the following, we shall also denote the toric
prime divisor, the relative homotopy class, the family of Maslov
index two holomorphic discs with boundary in Lagrangian torus fibers
and the function on $LX$ corresponding to $v_{j,a}$ by $D_{j,a}$,
$\beta_{j,a}$, $\varphi_{j,a}$ and $\Psi_{j,a}$ respectively.
\begin{lem}
There are exactly $l$ primitive collections given by
$$\mathfrak{P}_a=\{v_{j,a}:j=1,\ldots,n_a+1\},\ a=1,\ldots,l,$$
and hence the Stanley-Reisner ideal of
$\bar{X}=\mathbb{C}P^{n_1}\times\ldots\times\mathbb{C}P^{n_l}$ is
given by
$$\mathcal{SR}=\langle D_{1,a}\cup\ldots\cup D_{n_a+1,a}:a=1\ldots,l\rangle.$$
\end{lem}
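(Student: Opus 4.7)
The plan is to argue directly from the definition of primitive collection (Batyrev \cite{Batyrev91}): a set $\mathfrak{P}$ of primitive ray generators is \emph{primitive} if $\mathfrak{P}$ does not span a cone of $\Sigma$, but every proper subset does. Once the primitive collections are identified, the Stanley--Reisner ideal is immediately generated by the corresponding squarefree cup-product monomials in the $D_{j,a}$.

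The first observation is that the fan $\Sigma$ defining $\bar{X}=\mathbb{C}P^{n_1}\times\cdots\times\mathbb{C}P^{n_l}$ is the product fan $\Sigma_1\times\cdots\times\Sigma_l$, where $\Sigma_a$ is the fan of $\mathbb{C}P^{n_a}$ in $(N_a)_\mathbb{R}$. Concretely, a subset $S\subset\{v_{j,a}\}_{j,a}$ spans a cone of $\Sigma$ if and only if, for every $a=1,\ldots,l$, the subset $S\cap\mathfrak{P}_a$ spans a cone of $\Sigma_a$. This is the only structural fact I will use about the product.

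Next I verify that each $\mathfrak{P}_a=\{v_{1,a},\ldots,v_{n_a+1,a}\}$ is primitive. The linear relation $v_{1,a}+\cdots+v_{n_a+1,a}=0$ in $N_a$ shows that $\mathfrak{P}_a$ cannot span a (strictly convex) cone in $\Sigma_a$, hence it does not span a cone of $\Sigma$. On the other hand, removing any single $v_{j,a}$ leaves the $n_a$ remaining vectors, which are precisely the rays of a maximal cone of $\Sigma_a$; together with the empty set of rays from the other factors they span a cone of $\Sigma$. So $\mathfrak{P}_a$ is primitive.

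To see these are the only primitive collections, let $\mathfrak{P}$ be any primitive collection. Since $\mathfrak{P}$ does not span a cone of $\Sigma$, the product-fan characterization forces $\mathfrak{P}\cap\mathfrak{P}_a$ to fail to span a cone of $\Sigma_a$ for some $a$. But in $\Sigma_a$ every proper subset of $\mathfrak{P}_a$ spans a cone, so the only non-spanning subset of $\mathfrak{P}_a$ is $\mathfrak{P}_a$ itself; thus $\mathfrak{P}\supseteq\mathfrak{P}_a$. If the inclusion were strict, removing any $v\in\mathfrak{P}\setminus\mathfrak{P}_a$ would give a proper subset of $\mathfrak{P}$ still containing $\mathfrak{P}_a$, hence still non-spanning, contradicting primitivity. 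Therefore $\mathfrak{P}=\mathfrak{P}_a$. Invoking Batyrev's description of the Stanley--Reisner ideal as the ideal generated by the cup-product monomials $\prod_{v\in\mathfrak{P}}D_v$ over the primitive collections $\mathfrak{P}$ then yields the asserted presentation. There is no real obstacle here beyond keeping track of the product-fan combinatorics; the argument is essentially formal once the product structure of $\Sigma$ is made explicit.
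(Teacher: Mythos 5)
Your proof is correct and follows essentially the same route as the paper's: both arguments turn on the fact that cones of the product fan are direct sums of cones of the factor fans, so a set of rays spans a cone of $\Sigma$ if and only if its intersection with each $\mathfrak{P}_a$ spans a cone of the fan of $\mathbb{C}P^{n_a}$. The only (welcome) difference is that you also verify explicitly that each $\mathfrak{P}_a$ is itself primitive, a point the paper's proof leaves implicit.
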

\begin{proof}
Let $\mathfrak{P}$ be any primitive collection. By definition,
$\mathfrak{P}$ is a collection of primitive generators of
1-dimensional cones of the fan $\Sigma$ defining $\bar{X}$ such that
for any $v\in\mathfrak{P}$, $\mathfrak{P}\setminus\{v\}$ generates a
$(|\mathfrak{P}|-1)$-dimensional cone in $\Sigma$, while
$\mathfrak{P}$ itself does not generate a
$|\mathfrak{P}|$-dimensional cone in $\Sigma$. Suppose that
$\mathfrak{P}\not\subset\mathfrak{P}_a$ for any $a$. For each $a$,
choose $v\in\mathfrak{P}\setminus(\mathfrak{P}\cap\mathfrak{P}_a)$.
By definition, $\mathfrak{P}\setminus\{v\}$ generates a cone in
$\Sigma$. But all the cones in $\Sigma$ are direct sums of cones in
the fans of the factors. So, in particular,
$\mathfrak{P}\cap\mathfrak{P}_a$, whenever it's nonempty, will
generate a cone in the fan of $\mathbb{C}P^{n_a}$. Since
$\mathfrak{P}=\bigsqcup_{a=1}^l\mathfrak{P}\cap\mathfrak{P}_a$, this
implies that the set $\mathfrak{P}$ itself generates a cone, which
is impossible. We therefore conclude that $\mathfrak{P}$ must be
contained in, and hence equal to one of the $\mathfrak{P}_a$'s.
\end{proof}
Hence, to compute the quantum Stanley-Reisner ideal
$\mathcal{SR}_Q$, we must compute the expression
$D_{1,a}\ast\ldots\ast D_{n_a+1,a}$, for $a=1,\ldots,l$, where
$\ast$ denotes the small quantum product of $QH^*(\bar{X})$. Before
doing this, we shall recall the definitions and properties of the
relevant Gromov-Witten invariants and the small quantum product for
$\bar{X}=\mathbb{C}P^{n_1}\times\ldots\times\mathbb{C}P^{n_l}$ as
follows.\\

For $\delta\in H_2(\bar{X},\mathbb{Z})$, let
$\overline{\mathcal{M}}_{0,m}(\bar{X},\delta)$ be the moduli space
of genus 0 stable maps with $m$ marked points and class $\delta$.
Since $\bar{X}$ is convex (i.e. for all maps
$\varphi:\mathbb{C}P^1\rightarrow\bar{X}$,
$H^1(\mathbb{C}P^1,\varphi^*T\bar{X})=0$), the moduli space
$\overline{\mathcal{M}}_{0,m}(\bar{X},\delta)$, if nonempty, is a
variety of pure complex dimension
$\textrm{dim}_\mathbb{C}(\bar{X})+c_1(\bar{X})\cdot\delta+m-3$ (see,
for example, the book \cite{Aluffi97}, p.3). For $k=1,\ldots,m$, let
$ev_k:\overline{\mathcal{M}}_{0,m}(\bar{X},\delta)\rightarrow\bar{X}$
be the evaluation map at the $k$th marked point, and let
$\pi:\overline{\mathcal{M}}_{0,m}(\bar{X},\delta)\rightarrow\overline{\mathcal{M}}_{0,m}$
be the forgetful map, where $\overline{\mathcal{M}}_{0,m}$ denotes
the Deligne-Mumford moduli space of genus 0 stable curves with $m$
marked points. Then, given cohomology classes $A\in
H^*(\overline{\mathcal{M}}_{0,m},\mathbb{Q})$ and
$\gamma_1,\ldots,\gamma_m\in H^*(\bar{X},\mathbb{Q})$, the
Gromov-Witten invariant is defined by
$$GW_{0,m}^{\bar{X},\delta}(A;\gamma_1,\ldots,\gamma_m)=
\int_{[\overline{\mathcal{M}}_{0,m}(\bar{X},\delta)]}\pi^*(A)\wedge
ev_1^*(\gamma_1)\wedge\ldots\wedge ev_m^*(\gamma_m),$$ where
$[\overline{\mathcal{M}}_{0,m}(\bar{X},\delta)]$ denotes the
fundamental class of $\overline{\mathcal{M}}_{0,m}(\bar{X},\delta)$.
Let $\ast$ be the small quantum product of $QH^*(\bar{X})$. Then it
is not hard to show that, for any classes
$\gamma_1,\ldots,\gamma_r\in H^*(\bar{X},\mathbb{Q})$, the
expression $\gamma_1\ast\ldots\ast\gamma_r$ can be computed by the
formula
$$\gamma_1\ast\ldots\ast\gamma_r=\sum_{\delta\in H_2(\bar{X},\mathbb{Z})}\sum_{i}
GW_{0,r+1}^{\bar{X},\delta}(\textrm{PD(pt)};\gamma_1,\ldots,\gamma_r,t_i)t^iq^\delta,$$
where $\{t_i\}$ is a basis of $H^*(\bar{X},\mathbb{Q})$, $\{t^i\}$
denotes the dual basis of $\{t_i\}$ with respect to the Poincar\'{e}
pairing, and $\textrm{PD(pt)}\in
H^{2m-6}(\overline{\mathcal{M}}_{0,m},\mathbb{Q})$ denotes the
Poincar\'{e} dual of a point in $\overline{\mathcal{M}}_{0,m}$ (see,
e.g. formula (1.4) in Spielberg \cite{Spielberg01}). Moreover, since
$\bar{X}$ is homogeneous of the form $G/P$, where $G$ is a Lie group
and $P$ is a parabolic subgroup, the Gromov-Witten invariants are
enumerative, in the sense that
$GW_{0,r+1}^{\bar{X},\delta}(\textrm{PD(pt)};\gamma_1,\ldots,\gamma_r,t_i)$
is equal to the number of holomorphic maps
$\varphi:(\mathbb{C}P^1;x_1,\ldots,x_r,x_{r+1})\rightarrow\bar{X}$
with $\varphi_*([\mathbb{C}P^1])=\delta$ such that
$(\mathbb{C}P^1;x_1,\ldots,x_r,x_{r+1})$ is a given point in
$\overline{\mathcal{M}}_{0,r+1}$, $\varphi(x_k)\in\Gamma_k$, for
$k=1,\ldots,r$, and $\varphi(x_{r+1})\in T_i$, where
$\Gamma_1,\ldots,\Gamma_r,T_i$ are representatives of cycles
Poincar\'{e} duals to the classes $\gamma_1,\ldots,\gamma_r,t_i$
respectively (see \cite{Aluffi97}, p.12).\\

We shall now use the above facts to compute $D_{1,a}\ast\ldots\ast
D_{n_a+1,a}$, which is given by the formula
$$D_{1,a}\ast\ldots\ast D_{n_a+1,a}=\sum_{\delta\in H_2(\bar{X},\mathbb{Z})}\sum_{i}
GW_{0,n_a+2}^{\bar{X},\delta}(\textrm{PD(pt)};D_{1,a},\ldots,D_{n_a+1,a},t_i)t^iq^\delta.$$
First of all, since $H_2(\bar{X},\mathbb{Z})$ is the kernel of the
boundary map
$\partial:\pi_2(\bar{X},L_x)=\mathbb{Z}^d\rightarrow\pi_1(L_x)=N$, a
homology class $\delta\in H_2(\bar{X},\mathbb{Z})$ can be
represented by a $d$-tuple of integers
$$\delta=(c_{1,1},\ldots,c_{n_1+1,1},\ldots,c_{1,b},\ldots,c_{n_b+1,b},\ldots,c_{1,l},
\ldots,c_{n_l+1,l})\in\mathbb{Z}^d$$ satisfying
$\sum_{b=1}^l\sum_{j=1}^{n_b+1}c_{j,b}v_{j,b}=0\in N$. Then we have
$c_1(\bar{X})\cdot\delta=\sum_{b=1}^l\sum_{j=1}^{n_b+1}c_{j,b}$. For
the Gromov-Witten invariant
$GW_{0,n_a+2}^{\bar{X},\delta}(\textrm{PD(pt)};D_{1,a},\ldots,D_{n_a+1,a},t_i)$
to be nonzero, $\delta$ must be represented by irreducible
holomorphic curves which pass through all the divisors
$D_{1,a},\ldots,D_{n_a+1,a}$. This implies that $c_{j,a}\geq1$, for
$j=1,\ldots,n_a+1$, and moreover, $\delta$ lies in the cone of
effective classes $H_2^{\textrm{eff}}(\bar{X},\mathbb{Z})\subset
H_2(\bar{X},\mathbb{Z})$. By Theorem 2.15 of Batyrev
\cite{Batyrev91}, $H_2^{\textrm{eff}}(\bar{X},\mathbb{Z})$ is given
by the kernel of the restriction of the boundary map
$\partial|_{\mathbb{Z}_{\geq0}^d}:\mathbb{Z}_{\geq0}^d\rightarrow
N$. So we must also have $c_{j,b}\geq0$ for all $j$ and $b$, and we
conclude that
$$c_1(\bar{X})\cdot\delta=\sum_{b=1}^l\sum_{j=1}^{n_b+1}c_{j,b}\geq n_a+1.$$
By dimension counting,
$GW_{0,n_a+2}^{\bar{X},\delta}(\textrm{PD(pt)};D_{1,a},\ldots,D_{n_a+1,a},t_i)\neq0$
only when
$$2(\textrm{dim}_\mathbb{C}(\bar{X})+c_1(\bar{X})\cdot\delta+(n_a+2)-3)=
2((n_a+2)-3)+2(n_a+1)+\textrm{deg}(t_i).$$ The above inequality then
implies that $\textrm{deg}(t_i)\geq2\textrm{dim}(\bar{X})$. We
therefore must have $t_i=\textrm{PD(pt)}\in
H^{2\textrm{dim}(\bar{X})}(\bar{X},\mathbb{Q})$ and $\delta\in
H_2(\bar{X},\mathbb{Z})$ is represented by the $d$-tuple of integers
$\delta_a:=(c_{1,1},\ldots,c_{n_l+1,l})\in\mathbb{Z}^d$, where
$$c_{j,b}=\left\{\begin{array}{ll}
                    1 & \textrm{if $b=a$ and $j=1,\ldots,n_a+1$}\\
                    0 & \textrm{otherwise,}
                 \end{array}\right.$$
i.e., $\delta=\delta_a$ is the pullback of the class of a line in
the factor $\mathbb{C}P^{n_a}$. Hence,
$$D_{1,a}\ast\ldots\ast D_{n_a+1,a}=
GW_{0,n_a+2}^{\bar{X},\delta_a}(\textrm{PD(pt)};D_{1,a},\ldots,D_{n_a+1,a},\textrm{PD(pt)})q^{\delta_a}.$$
By Theorem 9.3 in Batyrev~\cite{Batyrev93} (see also Siebert
\cite{Siebert95}, section 4), the Gromov-Witten invariant on the
right-hand-side is equal to 1. Geometrically, this means that, for
any given point $p\in X\subset\bar{X}$, there is a unique
holomorphic map
$\varphi_a:(\mathbb{C}P^1;x_1,\ldots,x_{n_a+2})\rightarrow\bar{X}$
with class $\delta_a$, $\varphi_a(x_j)\in D_{j,a}$, for
$j=1,\ldots,n_a+1$, $\varphi_a(x_{n_a+2})=p$ and such that
$(\mathbb{C}P^1;x_1,\ldots,x_{n_a+2})$ is a given configuration in
$\overline{\mathcal{M}}_{0,n_a+2}$. Also note that, for
$a=1,\ldots,l$,
$q^{\delta_a}=\exp(-\frac{1}{2\pi}\int_{\delta_a}\omega_{\bar{X}})=e^{-r_a}=q_a$,
where $(q_1,\ldots,q_l)$ are the coordinates of the K\"{a}hler cone
$\mathcal{K}(\bar{X})$. Thus, we have the following lemma.
\begin{lem}\label{lem2.2} For $a=1,\ldots,l$, we have
$$D_{1,a}\ast\ldots\ast D_{n_a+1,a}=q_a.$$
Hence, the quantum Stanley-Reisner ideal of
$\bar{X}=\mathbb{C}P^{n_1}\times\ldots\times\mathbb{C}P^{n_l}$ is
given by
$$\mathcal{SR}_Q=\langle D_{1,a}\ast\ldots\ast D_{n_a+1,a}-q_a:a=1\ldots,l\rangle,$$
and the quantum cohomology ring of $\bar{X}$ has a presentation
given by
\begin{eqnarray*}
QH^*(\bar{X})=&\frac{\mathbb{C}[D_{1,1},\ldots,D_{n_1+1,1},\ldots,D_{1,l},\ldots,D_{n_l+1,l}]}
{\big\langle D_{j,a}-D_{n_a+1,a}:j=1,\ldots,n_a,\
a=1,\ldots,l\big\rangle
+\big\langle\prod_{j=1}^{n_a+1}D_{j,a}-q_a:a=1,\ldots,l\big\rangle}.
\end{eqnarray*}
\end{lem}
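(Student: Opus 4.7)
The plan is to deduce the presentation from the Siebert--Tian theorem (already cited): for a toric Fano $\bar{X}$, the ring $QH^*(\bar{X})$ is generated by $D_1,\ldots,D_d$, and one obtains a presentation by keeping the linear relations $\mathcal{L}$ intact and replacing the Stanley--Reisner generators by their quantum products. The primitive collections of the product fan have just been identified as $\mathfrak{P}_a=\{v_{1,a},\ldots,v_{n_a+1,a}\}$ for $a=1,\ldots,l$, so the whole task reduces to computing the quantum product $D_{1,a}\ast\cdots\ast D_{n_a+1,a}$ for each $a$.

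To evaluate this product I would use the standard Gromov--Witten expansion
$$D_{1,a}\ast\cdots\ast D_{n_a+1,a}=\sum_{\delta\in H_2(\bar{X},\mathbb{Z})}\sum_i GW^{\bar{X},\delta}_{0,n_a+2}\bigl(\mathrm{PD(pt)};D_{1,a},\ldots,D_{n_a+1,a},t_i\bigr)\, t^i\,q^\delta,$$
and then cut down the sum by two dimension arguments. First, for the invariant to be nonzero, a curve in the class $\delta$ must meet each of $D_{1,a},\ldots,D_{n_a+1,a}$, so $\delta$ lies in the effective cone $H_2^{\mathrm{eff}}(\bar{X},\mathbb{Z})$; by Batyrev's description of this cone via $\partial|_{\mathbb{Z}_{\ge0}^d}$, all components of the $d$-tuple representing $\delta$ are nonnegative with $c_{j,a}\ge 1$ for $j=1,\ldots,n_a+1$, forcing $c_1(\bar{X})\cdot\delta\ge n_a+1$. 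Second, the virtual dimension identity forces $\deg(t_i)\ge 2\dim_{\mathbb{C}}\bar{X}$, hence $t_i=\mathrm{PD(pt)}$ and $\delta=\delta_a$, the pullback of the line class from the $a$th factor.

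It then remains to show $GW^{\bar{X},\delta_a}_{0,n_a+2}(\mathrm{PD(pt)};D_{1,a},\ldots,D_{n_a+1,a},\mathrm{PD(pt)})=1$ and to identify $q^{\delta_a}$. Since $\bar{X}$ is homogeneous of type $G/P$, the invariant is enumerative: it counts genus zero degree one maps from a prescribed marked $\mathbb{P}^1$ that hit the hyperplanes $D_{1,a},\ldots,D_{n_a+1,a}$ at the first $n_a+1$ marked points and a chosen point $p\in X$ at the last. Since only the $a$th factor is constrained in any nontrivial way, this is the classical statement that through a generic point of $\mathbb{C}P^{n_a}$ and meeting $n_a+1$ generic hyperplanes in prescribed positions there is a unique line, giving $1$. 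Combined with $q^{\delta_a}=e^{-r_a}=q_a$ from the K\"ahler cone coordinates and the primitive-collection computation above, this yields $D_{1,a}\ast\cdots\ast D_{n_a+1,a}=q_a$, from which the stated presentation of $QH^*(\bar{X})$ follows immediately after noting that the linear equivalences among $\{D_{j,a}\}$ reduce each factor's generators to a single class.

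The main obstacle is the enumerative calculation of the Gromov--Witten number, since the two dimension reductions are essentially bookkeeping. One can either invoke Batyrev's Theorem 9.3 directly, or verify the count by hand using the transversality afforded by the convexity of $\bar{X}$ and the projection to the $a$th factor; the delicate point is ensuring that the $\mathrm{PD(pt)}$ constraint on $\overline{\mathcal{M}}_{0,n_a+2}$ (the fixed cross-ratio of marked points) does not alter the count, which follows because the group of automorphisms of $\mathbb{P}^1$ acts transitively on ordered triples and the remaining incidence conditions are generically transverse.
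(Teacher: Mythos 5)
Your proposal is correct and follows essentially the same route as the paper: the Siebert--Tian presentation combined with the primitive-collection lemma, the Gromov--Witten expansion of $D_{1,a}\ast\cdots\ast D_{n_a+1,a}$, the reduction to $\delta=\delta_a$ and $t_i=\mathrm{PD(pt)}$ via effectivity (Batyrev's description of the effective cone) and dimension counting, and finally the evaluation of the remaining invariant as $1$ by citing Batyrev's Theorem 9.3, with $q^{\delta_a}=e^{-r_a}=q_a$. The paper likewise invokes Batyrev (and Siebert) for the final enumerative count rather than verifying it by hand, so no further comparison is needed.
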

Proposition~\ref{prop2.2} now follows from a simple combinatorial
argument:
\begin{proof}[Proof of Proposition~\ref{prop2.2}] For a general toric Fano manifold $\bar{X}$,
recall, from Remark~\ref{rmk2.0}, that we have chosen a
$\mathbb{Z}$-basis
$$Q_1=(Q_{11},\ldots,Q_{d1}),\ldots,Q_l=(Q_{1l},\ldots,Q_{dl})\in\mathbb{Z}^d$$
of $K=H_2(\bar{X},\mathbb{Z})$ such that $(Q_{n+a,b})_{1\leq a,b\leq
l}=\textrm{Id}_{l\times l}$. So, by Cho-Oh's symplectic area formula
(\ref{area}), for $a=1,\ldots,l$, we have
\begin{eqnarray*}
\Big(\sum_{i=1}^nQ_{ia}\int_{\beta_i}\omega_{\bar{X}}\Big)+\int_{\beta_{n+a}}\omega_{\bar{X}}
& = & 2\pi\Big(\sum_{i=1}^nQ_{ia}(\langle
x,v_i\rangle-\lambda_i)\Big)+2\pi(\langle x,v_{n+a}\rangle-\lambda_{n+a})\\
& = & 2\pi\langle
x,\sum_{i=1}^nQ_{ia}v_i+v_{n+a}\rangle-2\pi(\sum_{i=1}^aQ_{ia}\lambda_a+\lambda_{n+a})\\
& = & 2\pi r_a.
\end{eqnarray*}
Then, by the definition of the convolution product of functions on
$LX$, we have
\begin{eqnarray*}
\Psi_1^{Q_{1a}}\star\ldots\star\Psi_n^{Q_{na}}\star\Psi_{n+a}(x,v)
& = & \left\{
\begin{array}{ll}
         e^{-\frac{1}{2\pi}(\sum_{i=1}^nQ_{ia}\int_{\beta_i}\omega_{\bar{X}})
         -\frac{1}{2\pi}\int_{\beta_{n+a}}\omega_{\bar{X}}} & \textrm{if $v=0$}\\
         0 & \textrm{if $v\neq0$}
         \end{array} \right.\\
& = & \left\{ \begin{array}{ll}
         e^{-r_a} & \textrm{if $v=0$}\\
         0 & \textrm{if $v\neq0$}
         \end{array} \right.\\
& = & q_a\mathbb{1},
\end{eqnarray*}
or
$\Psi_{n+a}=q_a(\Psi_1^{-1})^{Q_{1a}}\star\ldots\star(\Psi_n^{-1})^{Q_{na}}$,
for $a=1,\ldots,l$.

Suppose that the following condition: $Q_{ia}\geq0$ for
$i=1,\ldots,n$, $a=1,\ldots,l$, and for each $i=1,\ldots,n$, there
exists $1\leq a\leq l$ such that $Q_{ia}>0$, is satisfied, which is
the case when $\bar{X}$ is a product of projective spaces. Then the
inclusion
$$\mathbb{C}[\Psi_1,\ldots,\Psi_n,\Psi_{n+1},\ldots,\Psi_d]\hookrightarrow
\mathbb{C}[\Psi_1^{\pm1},\ldots,\Psi_n^{\pm1}]$$ is an isomorphism.
Consider the surjective map
$$\rho:\mathbb{C}[D_1,\ldots,D_d]\rightarrow\mathbb{C}[\Psi_1,\ldots,\Psi_d]$$
defined by mapping $D_i$ to $\Psi_i$ for $i=1,\ldots,d$. This map is
not injective because there are nontrivial relations in
$\mathbb{C}[\Psi_1,\ldots,\Psi_d]$ generated by the relations
$$\Psi_1^{Q_{1a}}\star\ldots\star\Psi_n^{Q_{na}}\star\Psi_{n+a}-q_a\mathbb{1}=0,\ a=1,\ldots,l.$$
By Lemma~\ref{lem2.2}, the kernel of $\rho$ is exactly given by the
ideal $\mathcal{SR}_Q$ when $\bar{X}$ is a product of projective
spaces. Thus, we have an isomorphism
$$\mathbb{C}[D_1,\ldots,D_d]/\mathcal{SR}_Q\overset{\cong}{\longrightarrow}\mathbb{C}[\Psi_1,\ldots,\Psi_d].$$
Since
$(\mathbb{C}[D_1,\ldots,D_d]/\mathcal{SR}_Q)/\mathcal{L}=\mathbb{C}[D_1,\ldots,D_d]/(\mathcal{L}+\mathcal{SR}_Q)
=QH^*(\bar{X})$, we obtain the desired isomorphism
$$QH^*(\bar{X})\cong\mathbb{C}[\Psi_1,\ldots,\Psi_d]/\mathcal{L}\cong
\mathbb{C}[\Psi_1^{\pm1},\ldots,\Psi_n^{\pm1}]/\mathcal{L},$$
provided that $\bar{X}$ is a product of projective spaces.
\end{proof}
\begin{nb}\label{rmk2.3}$\mbox{}$
\begin{enumerate}
\item[1.] In \cite{Batyrev93}, Theorem 5.3, Batyrev gave a "formula" for the
quantum Stanley-Reisner ideal $\mathcal{SR}_Q$ for any compact toric
K\"{a}hler manifolds, using his own definition of the small quantum
product (which is different from the usual one because Batyrev
counted only holomorphic maps from $\mathbb{C}P^1$). By Givental's
mirror theorem~\cite{Givental97a}, Batyrev's formula is true, using
the usual definition of the small quantum product, for all toric
Fano manifolds. Our proof of Lemma~\ref{lem2.2} is nothing but a
simple verification of Batyrev's formula in the case of products of
projective spaces, without using Givental's mirror theorem.
\item[2.] In any event,
Batyrev's formula in \cite{Batyrev93} for a presentation of the
small quantum cohomology ring $QH^*(\bar{X})$ of a toric Fano
manifold $\bar{X}$ is correct. In the same paper, Batyrev also
proved that $QH^*(\bar{X})$ is canonically isomorphic to the
Jacobian ring $Jac(W)$, where $W$ is the superpotential mirror to
$\bar{X}$ (Theorem 8.4 in \cite{Batyrev93}). Now, by
Theorem~\ref{thm3.3} in Subsection~\ref{subsec3.3}, the inverse SYZ
transformation $\mathcal{F}^{-1}$ gives a canonical isomorphism
$\mathcal{F}^{-1}:Jac(W)\overset{\cong}{\rightarrow}
\mathbb{C}[\Psi_1^{\pm1},\ldots,\Psi_n^{\pm1}]/\mathcal{L}$. Then,
the composition map
$QH^*(\bar{X})\rightarrow\mathbb{C}[\Psi_1^{\pm1},\ldots,\Psi_n^{\pm1}]/\mathcal{L}$,
which maps $D_i$ to $\Psi_i$, for $i=1,\ldots,d$, is an isomorphism.
This proves Proposition~\ref{prop2.2} all toric Fano manifolds. We
choose not to use this proof because all the geometry is then hid by
the use of Givental's mirror theorem.
\end{enumerate}
\end{nb}

\subsection{The role of tropical geometry}\label{subsec2.2}

While our proof of the isomorphism (\ref{isom2.3}) in
Proposition~\ref{prop2.2} is combinatorial in nature, the best way
to understand the geometry behind it is through the correspondence
between holomorphic curves and discs in $\bar{X}$ and their tropical
counterparts in $N_\mathbb{R}$. Indeed, this is the main reason why
we confine ourselves to the case of products of projective spaces.
Our first task is to define a tropical analog $QH^*_{trop}(\bar{X})$
of the small quantum cohomology ring of $\bar{X}$, when $\bar{X}$ is
a product of projective spaces. For this, we shall recall some
notions in tropical geometry. We will not state the precise
definitions, for which we refer the reader to Mikhalkin
\cite{Mikhalkin03}, \cite{Mikhalkin06}, \cite{Mikhalkin07} and
Nishinou-Siebert \cite{NS04}.\\

A \textit{genus 0 tropical curve with $m$ marked points} is a
connected tree $\Gamma$ with exactly $m$ unbounded edges (also
called leaves) and each bounded edge is assigned a positive length.
Let $\overline{\mathcal{M}}_{0,m}^{trop}$ be the moduli space of
genus 0 tropical curves with $m$ marked points (modulo
isomorphisms). The combinatorial types of $\Gamma$ partition
$\overline{\mathcal{M}}_{0,m}^{trop}$ into disjoint subsets, each of
which has the structure of a polyhedral cone $\mathbb{R}_{>0}^e$
(where $e$ is the number of bounded edges in $\Gamma$). There is a
distinguished point in $\overline{\mathcal{M}}_{0,m}^{trop}$
corresponding to the (unique) tree $\Gamma_m$ with exactly one
($m$-valent) vertex $V$, $m$ unbounded edges $E_1,\ldots,E_m$ and
\textit{no} bounded edges. See Figure 2.1 below. We will fix this
point in $\overline{\mathcal{M}}_{0,m}^{trop}$; this is analog to
fixing a point in $\overline{\mathcal{M}}_{0,m}$.
\begin{figure}[ht]
\setlength{\unitlength}{1mm}
\begin{picture}(100,30)
\curve(30,11.5, 12,27.5) \put(15,25.5){$E_2$} \curve(30,11.5,
12,-1.5) \put(16.5,-1){$E_3$} \curve(30,11.5, 40,30.5)
\put(39,26.5){$E_1$} \curve(30,11.5, 48,0.5) \put(41,-0.2){$E_4$}
\put(29.2,10.7){$\bullet$} \put(32,11){$V$} \put(55,11.5){Figure
2.1: $\Gamma_4\in\overline{\mathcal{M}}_{0,4}^{trop}.$}
\end{picture}
\end{figure}

Let $\Sigma$ be the fan defining
$\bar{X}=\mathbb{C}P^{n_1}\times\ldots\times\mathbb{C}P^{n_l}$, and
denote by
$\Sigma[1]=\{v_{1,1},\ldots,v_{n_1+1,1},\ldots,v_{1,a},\ldots,v_{n_a+1,a},\ldots,
v_{1,l},\ldots,v_{n_l+1,l}\}\subset N$ the set of primitive
generators of 1-dimensional cones in $\Sigma$. Let
$h:\Gamma_m\rightarrow N_\mathbb{R}$ be a continuous embedding such
that, for each $k=1,\ldots,m$,
$h(E_k)=h(V)+\mathbb{R}_{\geq0}v(E_k)$ for some
$v(E_k)\in\Sigma[1]$, and the following \textit{balancing condition}
is satisfied:
$$\sum_{k=1}^m v(E_k)=0.$$
Then the tuple $(\Gamma_m;E_1,\ldots,E_m;h)$ is a
\textit{parameterized $m$-marked, genus 0 tropical curve in
$\bar{X}$}. The \textit{degree} of $(\Gamma_m;E_1,\ldots,E_m;h)$ is
the $d$-tuple of integers
$\delta(h)=(c_{1,1},\ldots,c_{n_1+1,1},\ldots,c_{1,a},\ldots,c_{n_a+1,a},\ldots,
c_{1,l},\ldots,c_{n_l+1,l})\in\mathbb{Z}^d$, where
$$c_{j,a}=\left\{\begin{array}{ll}
                    1 & \textrm{if $v_{j,a}\in\{v(E_1),\ldots,v(E_m)\}$}\\
                    0 & \textrm{otherwise.}
                 \end{array}\right.$$
By the balancing condition, we have
$\sum_{a=1}^l\sum_{j=1}^{n_a+1}c_{j,a}v_{j,a}=0$, i.e. $\delta(h)$
lies in the kernel of $\partial:\mathbb{Z}^d\rightarrow N$, and so
$\delta(h)\in H_2(\bar{X},\mathbb{Z})$.

We want to consider the tropical counterpart, denoted by
$$TGW_{0,n_a+2}^{\bar{X},\delta}(\textrm{PD(pt)};D_{1,a},\ldots,D_{n_a+1,a},t_i),$$
of the Gromov-Witten invariant
$GW_{0,n_a+2}^{\bar{X},\delta}(\textrm{PD(pt)};D_{1,a},\ldots,D_{n_a+1,a},t_i)$.
\footnote{"TGW" stands for "tropical Gromov-Witten".} Since a
general definition is not available, we introduce a tentative
definition as follows.
\begin{defn}
We define
$TGW_{0,n_a+2}^{\bar{X},\delta}(\textrm{PD(pt)};D_{1,a},\ldots,D_{n_a+1,a},t_i)$
to be the number of parameterized $(n_a+1)$-marked, genus 0 tropical
curves of the form $(\Gamma_{n_a+1};E_1,\ldots,E_{n_a+1};h)$ with
$\delta(h)=\delta$ such that
$h(E_j)=h(V)+\mathbb{R}_{\geq0}v_{j,a}$, for $j=1,\ldots,n_a+1$, and
$h(V)\in\textrm{Log}(T_i)$, where $T_i$ is a cycle Poincar\'{e} dual
to $t_i$, whenever this number is finite. We set
$TGW_{0,n_a+2}^{\bar{X},\delta}(\textrm{PD(pt)};D_{1,a},\ldots,D_{n_a+1,a},t_i)$
to be 0 if this number is infinite. Here, $\textrm{Log}:X\rightarrow
N_\mathbb{R}$ is the map, after identifying $X$ with
$(\mathbb{C}^*)^n$, defined by
$\textrm{Log}(w_1,\ldots,w_n)=(\log|w_1|,\ldots,\log|w_n|)$, for
$(w_1,\ldots,w_n)\in X$.
\end{defn}
We then define the \textit{tropical small quantum cohomology ring}
$QH^*_{trop}(\bar{X})$ of
$\bar{X}=\mathbb{C}P^{n_1}\times\ldots\times\mathbb{C}P^{n_l}$ as a
presentation:
$$QH^*_{trop}(\bar{X})=\mathbb{C}[D_{1,1},\ldots,D_{n_1+1,1},\ldots,D_{1,l}\ldots,D_{n_l+1,l}]/
(\mathcal{L}+\mathcal{SR}_Q^{trop}),$$ where $\mathcal{SR}_Q^{trop}$
is the tropical version of the quantum Stanley-Reisner ideal,
defined to be the ideal generated by the relations
$$D_{1,a}\ast_T\ldots\ast_T D_{n_a+1,a}=\sum_{\delta\in H_2(\bar{X},\mathbb{Z})}\sum_{i}
TGW_{0,n_a+2}^{\bar{X},\delta}(\textrm{PD(pt)};D_{1,a},\ldots,D_{n_a+1,a},t_i)t^iq^\delta,$$
for $a=1,\ldots,l$. Here $\ast_T$ denotes the product in
$QH^*_{trop}(\bar{X})$, which we call the tropical small quantum
product. It is not hard to see that, as in the holomorphic case, we
have
$$TGW_{0,n_a+2}^{\bar{X},\delta}(\textrm{PD(pt)};D_{1,a},\ldots,D_{n_a+1,a},t_i)
=\left\{\begin{array}{ll}
            1 & \textrm{if $t_i=\textrm{PD(pt)}$ and $\delta=\delta_a$}\\
            0 & \textrm{otherwise.}
        \end{array}\right.$$
Indeed, as a special case of the \textit{correspondence theorem} of
Mikhalkin \cite{Mikhalkin03} and Nishinou-Siebert \cite{NS04}, we
have: For a given point $p\in X$, let $\xi:=\textrm{Log}(p)\in
N_\mathbb{R}$. Then the unique holomorphic curve
$\varphi_a:(\mathbb{C}P^1;x_1,\ldots,x_{n_a+2})\rightarrow\bar{X}$
with class $\delta_a$, $\varphi_a(x_j)\in D_{j,a}$, for
$j=1,\ldots,n_a+1$, $\varphi_a(x_{n_a+2})=p$ and such that
$(\mathbb{C}P^1;x_1,\ldots,x_{n_a+2})$ is a given configuration in
$\overline{\mathcal{M}}_{0,n_a+2}$, is corresponding to the unique
parameterized $(n_a+1)$-marked tropical curve
$(\Gamma_{n_a+1};E_1,\ldots,E_{n_a+1};h_a)$ of genus 0 and degree
$\delta_a$ such that $h_a(V)=\xi$ and
$h_a(E_j)=\xi+\mathbb{R}_{\geq0}v_{j,a}$, for $j=1,\ldots,n_a+1$. It
follows that
$$\mathcal{SR}_Q^{trop}=\langle D_{1,a}\ast_T\ldots\ast_T D_{n_a+1,a}-q_a:a=1\ldots,l\rangle,$$
and there is a canonical isomorphism
\begin{equation}\label{step1}
QH^*(\bar{X})\cong QH^*_{trop}(\bar{X}).
\end{equation}
\begin{nb}
All these arguments and definitions rely, in an essential way, on
the fact that $\bar{X}$ is a product of projective spaces, so that
Gromov-Witten invariants are enumerative and all the (irreducible)
holomorphic curves, which contribute to $QH^*(\bar{X})$, are not
mapped into the toric boundary divisor $D_\infty$. Remember that
tropical geometry cannot be used to count nodal curves or curves
with irreducible components mapping into $D_\infty$.
\end{nb}
Next, we take a look at tropical discs. Consider the point
$\Gamma_1\in\overline{\mathcal{M}}_{0,1}^{trop}$. This is nothing
but a half line, consisting of an unbounded edge $E$ emanating from
a univalent vertex $V$. See Figure 2.2 below.
\begin{figure}[ht]
\setlength{\unitlength}{1mm}
\begin{picture}(100,20)
\curve(20,3, 63,20) \put(19,2){$\bullet$} \put(16,1){$V$}
\put(41,7){$E$} \put(64,10){Figure 2.2:
$\Gamma_1\in\overline{\mathcal{M}}_{0,1}^{trop}.$}
\end{picture}
\end{figure}

\noindent A \textit{parameterized Maslov index two tropical disc in
$\bar{X}$} is a tuple $(\Gamma_1,E,h)$, where $h:\Gamma_1\rightarrow
N_\mathbb{R}$ is an embedding such that
$h(E)=h(V)+\mathbb{R}_{\geq0}v$ for some
$v\in\Sigma[1]$.\footnote{For precise definitions of general
tropical discs (with higher Maslov indices), we refer the reader to
Nishinou \cite{Nishinou06}; see also the recent work of Gross
\cite{Gross09}.} For any given point $\xi\in N_\mathbb{R}$, it is
obvious that, there is a unique parameterized Maslov index two
tropical disc $(\Gamma_1,E,h_{j,a})$ such that $h_{j,a}(V)=\xi$ and
$h_{j,a}(E)=\xi+\mathbb{R}_{\geq0}v_{j,a}$, for any
$v_{j,a}\in\Sigma[1]$. Comparing this to the result
(Theorem~\ref{cho-oh}) of Cho-Oh on the classification of Maslov
index two holomorphic discs in $\bar{X}$ with boundary in the
Lagrangian torus fiber $L_\xi:=\textrm{Log}^{-1}(\xi)\subset X$, we
get a one-to-one correspondence between the families of Maslov index
two holomorphic discs in $(\bar{X},L_\xi)$ and the parameterized
Maslov index two tropical discs $(\Gamma_1,E,h)$ in $\bar{X}$ such
that $h(V)=\xi$. We have the holomorphic disc
$\varphi_{j,a}:(D^2,\partial D^2)\rightarrow(\bar{X},L_\xi)$
corresponding to the tropical disc
$(\Gamma_1,E,h_{j,a})$.\footnote{This correspondence also holds for
other toric manifolds, not just for products of projective spaces.}
Then, by (\ref{1-1}), we also get a one-to-one correspondence
between the parameterized Maslov index two tropical discs
$(\Gamma_1,E,h_{j,a})$ in $\bar{X}$ and the functions
$\Psi_{j,a}:L_X\rightarrow\mathbb{R}$:
\begin{equation*}
\{\varphi_{j,a}\}\ \overset{1-1}{\longleftrightarrow}\
\{(\Gamma_1,E,h_{j,a})\}\ \overset{1-1}{\longleftrightarrow}\
\{\Psi_{j,a}\}.
\end{equation*}

Now, while the canonical isomorphism
\begin{equation}\label{step2}
QH^*_{trop}(\bar{X})\cong\mathbb{C}[\Psi_{1,1}^{\pm1},\ldots,\Psi_{n_1,1}^{\pm1},\ldots,
\Psi_{1,l}^{\pm1},\ldots,\Psi_{n_l,l}^{\pm1}]/\mathcal{L}
\end{equation}
follows from the same simple combinatorial argument in the proof of
Proposition~\ref{prop2.2}, the geometry underlying it is exhibited
by a simple but crucial observation, which we formulate as the
following proposition.
\begin{prop}\label{prop2.3}
Let $\xi\in N_\mathbb{R}$, then the unique parameterized
$(n_a+1)$-marked, genus 0 tropical curve
$(\Gamma_{n_a+1};E_1,\ldots,E_{n_a+1};h_a)$ such that $h_a(V)=\xi$
and $h_a(E_j)=\xi+\mathbb{R}_{\geq0}v_{j,a}$, for
$j=1,\ldots,n_a+1$, is obtained by gluing the $n_a+1$ parameterized
Maslov index two tropical discs
$(\Gamma_1,E,h_{1,a}),\ldots,(\Gamma_1,E,h_{n_a+1,a})$ with
$h_{j,a}(V)=\xi$, for $j=1,\ldots,n_a+1$, in the following sense:
The map $h:(\Gamma_{n_a+1};E_1,\ldots,E_{n_a+1})\rightarrow
N_\mathbb{R}$ defined by $h|_{E_j}=h_{j,a}|_E$, for
$j=1,\ldots,n_a+1$, gives a parameterized $(n_a+1)$-marked, genus 0
tropical curve, which coincides with
$(\Gamma_{n_a+1};E_1,\ldots,E_{n_a+1};h_a)$.
\end{prop}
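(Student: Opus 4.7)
The plan is to verify the gluing construction directly and then invoke the uniqueness statement embedded in the proposition. First, I would observe that the combinatorial type of $\Gamma_{n_a+1}$ is completely determined: a single $(n_a+1)$-valent vertex $V$ with $n_a+1$ unbounded edges $E_1,\ldots,E_{n_a+1}$ and no bounded edges. Hence the map $h:\Gamma_{n_a+1}\to N_{\mathbb{R}}$ defined by $h|_{E_j}=h_{j,a}|_E$ is well-defined as a set-theoretic map provided that all the $h_{j,a}$ agree at $V$; and indeed they do, because by hypothesis $h_{j,a}(V)=\xi$ for every $j=1,\ldots,n_a+1$. Thus $h(V)=\xi$ and $h(E_j)=\xi+\mathbb{R}_{\geq0}v_{j,a}$ by construction.

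Next, I would check that $(\Gamma_{n_a+1};E_1,\ldots,E_{n_a+1};h)$ is a bona fide parameterized $(n_a+1)$-marked, genus $0$ tropical curve in $\bar{X}$. The only nontrivial point is the balancing condition at the unique vertex $V$. Recalling that the primitive generators coming from the factor $\mathbb{C}P^{n_a}$ satisfy
$$\sum_{j=1}^{n_a+1}v_{j,a}=e_1+\ldots+e_{n_a}+\Bigl(-\sum_{j=1}^{n_a}e_j\Bigr)=0\in N_a\subset N,$$
the balancing condition holds automatically. Moreover, reading off the degree from the outgoing edge directions, $\delta(h)$ is precisely $\delta_a$, the pullback of the class of a line in $\mathbb{C}P^{n_a}$.

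Finally, I would invoke the uniqueness clause. By hypothesis $(\Gamma_{n_a+1};E_1,\ldots,E_{n_a+1};h_a)$ is the unique parameterized $(n_a+1)$-marked, genus $0$ tropical curve with $h_a(V)=\xi$ and $h_a(E_j)=\xi+\mathbb{R}_{\geq0}v_{j,a}$ (this uniqueness is already used in the identification of the tropical Gromov–Witten invariant with $1$ in Subsection~\ref{subsec2.1}); since the glued map $h$ satisfies exactly these data, one must have $h=h_a$.

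The main obstacle here is purely bookkeeping rather than any hard geometry: one must carefully identify the edge $E$ of each tropical disc $(\Gamma_1,E,h_{j,a})$ with the corresponding leaf $E_j$ of $\Gamma_{n_a+1}$, verify compatibility at the shared vertex, and use the vanishing $\sum_j v_{j,a}=0$ for each factor $\mathbb{C}P^{n_a}$. This last vanishing is exactly what distinguishes the product-of-projective-spaces case and is ultimately the reason the gluing picture gives a clean tropical interpretation of the quantum Stanley–Reisner relations.
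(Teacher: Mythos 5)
Your proof is correct and follows essentially the same route as the paper: verify the balancing condition at the single vertex via $\sum_{j=1}^{n_a+1}v_{j,a}=0$, conclude that $h$ defines a valid parameterized tropical curve, and invoke uniqueness to identify it with $h_a$. The extra bookkeeping you include (well-definedness at the shared vertex, degree computation) is left implicit in the paper but adds nothing structurally different.
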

\begin{proof}
Since $\sum_{j=1}^{n_a+1}v_{j,a}=0$, the balancing condition at
$V\in\Gamma_{n_a+1}$ is automatically satisfied. So $h$ defines a
parameterized $(n_a+1)$-marked, genus 0 tropical curve, which
satisfies the same conditions as
$(\Gamma_{n_a+1};E_1,\ldots,E_{n_a+1};h_a)$.
\end{proof}
For example, in the case of $\bar{X}=\mathbb{C}P^2$, this can be
seen in Figure 2.3 below.
\begin{figure}[ht]
\setlength{\unitlength}{1mm}
\begin{picture}(100,35)
\curve(20,17, 38,17) \curve(20,17, 20,35) \curve(20,17, 5,2)
\put(10,1){$(\Gamma_3,h)$} \put(19.1,16.1){$\bullet$}
\put(16,16){$V$} \put(42,16){glued from} \curve(74,17, 93,17)
\put(73,16.1){$\bullet$} \put(83,13.5){$(\Gamma_1,h_1)$}
\curve(72,19, 72,37) \put(71.1,18){$\bullet$}
\put(72.5,29){$(\Gamma_1,h_2)$} \curve(71,16, 56,1)
\put(70,15){$\bullet$} \put(60,2){$(\Gamma_1,h_3)$}
\put(35,-2){Figure 2.3}
\end{picture}
\end{figure}

The functions $\Psi_{j,a}$'s could have been defined by counting
parameterized Maslov index two tropical discs, instead of counting
Maslov index two holomorphic discs. So the above proposition indeed
gives a geometric reason to explain why the relation
$$D_{1,a}\ast_T\ldots\ast_T D_{n_a+1,a}=q_a$$
in $QH^*_{trop}(\bar{X})$ should coincide with the relation
$$\Psi_{1,a}\star\ldots\star\Psi_{n_a+1,a}=q_a\mathbb{1}$$
in $\mathbb{C}[\Psi_{1,1}^{\pm1},\ldots,\Psi_{n_1,1}^{\pm1},\ldots,
\Psi_{1,l}^{\pm1},\ldots,\Psi_{n_l,l}^{\pm1}]/\mathcal{L}$. The
convolution product $\star$ may then be thought of as a way to
encode the gluing of tropical discs.

We summarize what we have said as follows: In the case of products
of projective spaces, we factor the isomorphism
$QH^*(\bar{X})\cong\mathbb{C}[\Psi_1^{\pm1},\ldots,\Psi_n^{\pm1}]/\mathcal{L}$
in Proposition~\ref{prop2.2} into two isomorphisms (\ref{step1}) and
(\ref{step2}). The first one comes from the correspondence between
holomorphic curves in $\bar{X}$ which contribute to $QH^*(\bar{X})$
and tropical curves in $N_\mathbb{R}$ which contribute to
$QH^*_{trop}(\bar{X})$. The second isomorphism is due to, on the one
hand, the fact that each tropical curve which contributes to
$QH^*_{trop}(\bar{X})$ can be obtained by gluing Maslov index two
tropical discs, and, on the other hand, the correspondence between
these tropical discs in $N_\mathbb{R}$ and Maslov index two
holomorphic discs in $\bar{X}$ with boundary on Lagrangian torus
fibers. See Figure 2.4 below.
\begin{figure}[ht]
\setlength{\unitlength}{1mm}
\begin{picture}(100,31)
\put(5,30){$QH^*(\bar{X})$} \put(11,16){\vector(0,1){12.5}}
\put(11,16){\vector(0,-1){11}} \put(4,1){$QH^*_{trop}(\bar{X})$}
\put(28.5,26){\vector(2,-1){13}} \put(28.5,26){\vector(-2,1){9}}
\put(31,25.5){Prop~\ref{prop2.2}} \put(28.5,6){\vector(2,1){13}}
\put(28.5,6){\vector(-2,-1){7}} \put(31,4){Prop~\ref{prop2.3}}
\put(41,14.5){$\mathbb{C}[\Psi_1^{\pm1},\ldots,\Psi_n^{\pm1}]/\mathcal{L}$}
\put(80,15.5){\vector(1,0){9}} \put(80,15.5){\vector(-1,0){8}}
\put(80,16.5){$\mathcal{F}$} \put(90,14.5){$Jac(W)$}
\put(50,-2){Figure 2.4}
\end{picture}
\end{figure}

\noindent Here $\mathcal{F}$ denotes the SYZ mirror transformation
for $\bar{X}$, which is the subject of Section~\ref{sec3}.

\section{SYZ mirror transformations}\label{sec3}

In this section, we first derive Hori-Vafa's mirror manifold using
semi-flat SYZ mirror transformations. Then we introduce the main
character in this paper: the SYZ mirror transformations for toric
Fano manifolds, and prove our main result.

\subsection{Derivation of Hori-Vafa's mirror manifold by T-duality}\label{subsec3.1}

Recall that we have an exact sequence (\ref{seq2.1}):
\begin{equation}\label{seq3.1}
0\longrightarrow K\overset{\iota}{\longrightarrow}\mathbb{Z}^d
\overset{\partial}{\longrightarrow}N\longrightarrow0,
\end{equation}
and we denote by
$$Q_1=(Q_{11},\ldots,Q_{d1}),\ldots,Q_l=(Q_{1l},\ldots,Q_{dl})\in\mathbb{Z}^d$$
a $\mathbb{Z}$-basis of $K$. The mirror manifold of $\bar{X}$,
derived by Hori and Vafa in \cite{HV00} using physical arguments, is
the complex submanifold
$$Y_{HV}=\Big\{(Z_1,\ldots,Z_d)\in\mathbb{C}^d:\prod_{i=1}^d Z_i^{Q_{ia}}=q_a,\ a=1,\ldots,l\Big\},$$
in $\mathbb{C}^d$, where $q_a=e^{-r_a}=\exp(-\sum_{i=1}^d
Q_{ia}\lambda_i)$, for $a=1,\ldots,l$. As a complex manifold,
$Y_{HV}$ is biholomorphic to the algebraic torus $(\mathbb{C}^*)^n$.
By our choice of the $\mathbb{Z}$-basis $Q_1,\ldots,Q_l$ of $K$ in
Remark~\ref{rmk2.0}, $Y_{HV}$ can also be written as
$$Y_{HV}=\Big\{(Z_1,\ldots,Z_d)\in\mathbb{C}^d:Z_1^{Q_{1a}}\ldots Z_n^{Q_{na}}Z_{n+a}=q_a,\ a=1,\ldots,l\Big\}.$$
Note that, in fact, $Y_{HV}\subset(\mathbb{C}^*)^d$. In terms of
these coordinates, Hori and Vafa predicted that the superpotential
$W:Y_{HV}\rightarrow\mathbb{C}$ is given by
\begin{eqnarray*}
W & = & Z_1+\ldots+Z_d\\
  & = & Z_1+\ldots+Z_n+\frac{q_1}{Z_1^{Q_{11}}\ldots Z_n^{Q_{n1}}}+
\ldots+\frac{q_l}{Z_1^{Q_{1l}}\ldots Z_n^{Q_{nl}}}.
\end{eqnarray*}
The goal of this subsection is to show that the SYZ mirror manifold
$Y_{SYZ}$, which is obtained by applying T-duality to the open dense
orbit $X\subset\bar{X}$, is contained in Hori-Vafa's manifold
$Y_{HV}$ as a bounded open subset. The result itself is not new, and
can be found, for example, in Auroux \cite{Auroux07}, Proposition
4.2. For the sake of completeness, we give a self-contained proof,
which will show how T-duality, i.e. fiberwise dualizing torus
bundles, transforms the \textit{symplectic quotient space} $X$ into
the \textit{complex subspace} $Y_{SYZ}$.\\

We shall first briefly recall the constructions of $\bar{X}$ and $X$
as symplectic quotients. For more details, we refer the reader to
Appendix 1 in Guillemin \cite{Guillemin94}.

From the above exact sequence (\ref{seq3.1}), we get an exact
sequence of real tori
\begin{equation}\label{seq3.2}
1\longrightarrow T_K\overset{\iota}{\longrightarrow}T^d
\overset{\partial}{\longrightarrow}T_N\longrightarrow1,
\end{equation}
where $T^d=\mathbb{R}^d/(2\pi\mathbb{Z})^d$ and we denote by
$K_\mathbb{R}$ and $T_K$ the real vector space
$K\otimes_\mathbb{Z}\mathbb{R}$ and the torus $K_\mathbb{R}/K$
respectively. Considering their Lie algebras and dualizing give
another exact sequence
\begin{equation}\label{seq3.2'}
0\longrightarrow
M_\mathbb{R}\overset{\check{\partial}}{\longrightarrow}(\mathbb{R}^d)^\vee
\overset{\check{\iota}}{\longrightarrow}K_\mathbb{R}^\vee\longrightarrow0.
\end{equation}
Denote by $W_1,\ldots,W_d\in\mathbb{C}$ the complex coordinates on
$\mathbb{C}^d$. The standard diagonal action of $T^d$ on
$\mathbb{C}^d$ is Hamiltonian with respect to the standard
symplectic form $\frac{\sqrt{-1}}{2}\sum_{i=1}^d dW_i\wedge
d\bar{W}_i$ and the moment map
$h:\mathbb{C}^d\rightarrow(\mathbb{R}^d)^\vee$ is given by
$$h(W_1,\ldots,W_d)=\frac{1}{2}(|W_1|^2,\ldots,|W_d|^2).$$
Restricting to $T_K$, we get a Hamiltonian action of $T_K$ on
$\mathbb{C}^d$ with moment map $h_K=\check{\iota}\circ
h:\mathbb{C}^d\rightarrow\check{K}_\mathbb{R}$. In terms of the
$\mathbb{Z}$-basis $\{Q_1,\ldots,Q_l\}$ of $K$, the map
$\check{\iota}:(\mathbb{R}^d)^\vee\rightarrow K^\vee_\mathbb{R}$ is
given by
\begin{equation}\label{iota*}
\check{\iota}(X_1,\ldots,X_d)=\Bigg(\sum_{i=1}^d
Q_{i1}X_i,\ldots,\sum_{i=1}^d Q_{il}X_i\Bigg),
\end{equation}
for $(X_1,\ldots,X_d)\in(\mathbb{R}^d)^\vee$, in the coordinates
associated to the dual basis $\check{Q}_1,\ldots,\check{Q}_l$ of
$K^\vee=\textrm{Hom}(K,\mathbb{Z})$. The moment map
$h_K:\mathbb{C}^d\rightarrow K^\vee_\mathbb{R}$ can thus be written
as
$$h_K(W_1,\ldots,W_d)=\frac{1}{2}\Bigg(\sum_{i=1}^d
Q_{i1}|W_i|^2,\ldots,\sum_{i=1}^d Q_{il}|W_i|^2\Bigg)\in
K^\vee_\mathbb{R}.$$ In these coordinates,
$r=(r_1,\ldots,r_l)=-\check{\iota}(\lambda_1,\ldots,\lambda_d)$ is
an element in $K^\vee_\mathbb{R}=H^2(\bar{X},\mathbb{R})$, and
$\bar{X}$ and $X$ are given by the symplectic quotients
$$\bar{X}=h_K^{-1}(r)/T_K\textrm{ and }X=(h_K^{-1}(r)\cap(\mathbb{C}^*)^d)/T_K$$
respectively.

In the above process, the image of $h_K^{-1}(r)$ under the map
$h:\mathbb{C}^d\rightarrow(\mathbb{R}^d)^\vee$ lies inside the
affine linear subspace
$M_\mathbb{R}(r)=\{(X_1,\ldots,X_d)\in(\mathbb{R}^d)^\vee:\check{\iota}(X_1,\ldots,X_d)=r\}$,
i.e. a translate of $M_\mathbb{R}$. In fact,
$h(h_K^{-1}(r))=\check{\iota}^{-1}(r)\cap\{(X_1,\ldots,X_d)\in(\mathbb{R}^d)^\vee:X_i\geq0,\textrm{
for $i=1,\ldots,d$}\}$ is the polytope $\bar{P}\subset
M_\mathbb{R}(r)$, and $h(h_K^{-1}(r)\cap(\mathbb{C}^*)^d)=
\check{\iota}^{-1}(r)\cap\{(X_1,\ldots,X_d)\in(\mathbb{R}^d)^\vee:X_i>0,\textrm{
for $i=1,\ldots,d$}\}$ is the interior $P$ of $\bar{P}$. Now,
restricting $h$ to $h_K^{-1}(r)\cap(\mathbb{C}^*)^d$ gives a
$T^d$-bundle $h:h_K^{-1}(r)\cap(\mathbb{C}^*)^d\rightarrow P$ (which
is trivial), and $X$ is obtained by taking the quotient of this
$T^d$-bundle fiberwise by $T_K$. Hence, $X$ is naturally a
$T_N$-bundle over $P$, which can be written as
$$X=T^*P/N=P\times\sqrt{-1}T_N$$
(cf. Abreu \cite{Abreu00}).\footnote{We have, by abuse of notations,
used $N$ to denote the family of lattices $P\times\sqrt{-1}N$ over
$P$. Similarly, we denote by $M$ the family of lattices
$P\times\sqrt{-1} M$ below.} The reduced symplectic form
$\omega_X=\omega_{\bar{X}}|_X$ is the standard symplectic form
$$\omega_X=\sum_{j=1}^n dx_j\wedge du_j$$
where $x_1,\ldots,x_n\in\mathbb{R}$ and
$u_1,\ldots,u_n\in\mathbb{R}/2\pi\mathbb{Z}$ are respectively the
coordinates on $P\subset M_\mathbb{R}(r)$ and $T_N$. In other words,
the $x_j$'s and $u_j$'s are \textit{symplectic coordinates} (i.e.
action-angle coordinates). And the moment map is given by the
projection to $P$
$$\mu:X\rightarrow P.$$

We define the \textit{SYZ mirror manifold} by T-duality as follows.
\begin{defn}
The SYZ mirror manifold $Y_{SYZ}$ is defined as the total space of
the $T_M$-bundle, where $T_M=M_\mathbb{R}/M=(T_N)^\vee$, which is
obtained by fiberwise dualizing the $T_N$-bundle $\mu:X\rightarrow
P$.
\end{defn}
In other words, we have
$$Y_{SYZ}=TP/M=P\times\sqrt{-1}T_M\subset M_\mathbb{R}(r)\times\sqrt{-1}T_M.$$
$Y_{SYZ}$ has a natural complex structure, which is induced from the
one on $M_\mathbb{R}(r)\times\sqrt{-1}T_M\cong(\mathbb{C}^*)^n$. We
let $z_j=\exp(-x_j-\sqrt{-1}y_j)$, $j=1,\ldots,n$, be the
\textit{complex coordinates} on
$M_\mathbb{R}(r)\times\sqrt{-1}T_M\cong(\mathbb{C}^*)^n$ and
restricted to $Y_{SYZ}$, where
$y_1,\ldots,y_n\in\mathbb{R}/2\pi\mathbb{Z}$ are the coordinates on
$T_M=(T_N)^\vee$ dual to $u_1,\ldots,u_n$. We also let
$\Omega_{Y_{SYZ}}$ be the following nowhere vanishing holomorphic
$n$-form on $Y_{SYZ}$:
$$\Omega_{Y_{SYZ}}=\bigwedge_{j=1}^n(-dx_j-\sqrt{-1}dy_j)
=\frac{dz_1}{z_1}\wedge\ldots\wedge\frac{dz_n}{dz_n},$$ and denote
by
$$\nu:Y_{SYZ}\rightarrow P$$
the torus fibration dual to $\mu:X\rightarrow P$.
\begin{prop}\label{prop3.1}
The SYZ mirror manifold $Y_{SYZ}$ is contained in Hori-Vafa's mirror
manifold $Y_{HV}$ as an open complex submanifold. More precisely,
$Y_{SYZ}$ is the bounded domain $\{(Z_1,\ldots,Z_d)\in
Y_{SYZ}:|Z_i|<1,\ i=1,\ldots,d\}$ inside $Y_{HV}$.
\end{prop}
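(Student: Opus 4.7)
The plan is to construct an explicit holomorphic embedding $Y_{SYZ} \hookrightarrow Y_{HV}$ using the monomials that appear in the Hori--Vafa superpotential, and then identify its image with the required bounded domain. Concretely, for a point of $Y_{SYZ} = P \times \sqrt{-1}T_M$ with complex coordinates $z_1, \ldots, z_n$ (where $z_j = \exp(-x_j - \sqrt{-1}y_j)$), I would define
\[
Z_i \;=\; e^{\lambda_i} z^{v_i}, \qquad i = 1, \ldots, d.
\]
This is the natural candidate because $e^{\lambda_i} z^{v_i}$ is exactly the $i$-th term of $W$, and the coordinates $Z_1, \ldots, Z_d$ on $Y_{HV}$ are precisely the variables in which $W = Z_1 + \ldots + Z_d$.

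The first step is to verify that the image actually lies in $Y_{HV}$, i.e.\ that the defining constraints $\prod_i Z_i^{Q_{ia}} = q_a$ are satisfied. This is immediate from the exactness of (\ref{seq3.1}): since $(Q_{1a}, \ldots, Q_{da})$ lies in $K = \ker \partial$, we have $\sum_i Q_{ia} v_i = 0$, and so
\[
\prod_{i=1}^d Z_i^{Q_{ia}} \;=\; e^{\sum_i Q_{ia}\lambda_i}\, z^{\sum_i Q_{ia} v_i} \;=\; e^{-r_a} \;=\; q_a.
\]
Under our normalization $v_1 = e_1, \ldots, v_n = e_n$ and $\lambda_1 = \ldots = \lambda_n = 0$, one moreover has $Z_j = z_j$ for $j \le n$, so the first $n$ coordinates are already a holomorphic chart, and the constraint equations determine $Z_{n+1}, \ldots, Z_d$ uniquely as Laurent monomials in $z_1, \ldots, z_n$. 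Hence the map $Y_{SYZ} \to Y_{HV}$ is a biholomorphism onto its image.

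The second step is to identify that image with $\{|Z_i| < 1 : i = 1,\ldots, d\}$. This reduces to a direct computation using Cho--Oh's area formula (\ref{area}): since $|z_j| = e^{-x_j}$, we obtain
\[
|Z_i| \;=\; e^{\lambda_i} \, e^{-\langle x, v_i\rangle} \;=\; \exp\!\bigl(-(\langle x, v_i\rangle - \lambda_i)\bigr),
\]
which is strictly less than $1$ precisely when $\langle x, v_i\rangle > \lambda_i$. Ranging over $i = 1, \ldots, d$, these inequalities together cut out the interior $P$ of the moment polytope $\bar{P}$. Conversely, given any point in $Y_{HV}$ with all $|Z_i| < 1$, solving for $x_j = -\log|Z_j|$ and $y_j = -\arg Z_j$ produces a preimage in $Y_{SYZ}$, since the defining constraints on $Y_{HV}$ guarantee that the remaining inequalities $\langle x, v_{n+a}\rangle > \lambda_{n+a}$ are equivalent to $|Z_{n+a}| < 1$.

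I do not anticipate a real obstacle: the argument is essentially a bookkeeping exercise translating the polytope inequalities into modulus bounds on the Hori--Vafa coordinates. The only conceptual point worth highlighting is \emph{why} the map $Z_i = e^{\lambda_i} z^{v_i}$ is natural, namely that it is the one dictated by the semi-flat SYZ transform applied fiberwise to the dual torus bundle $\nu : Y_{SYZ} \to P$; this naturality (rather than its verification) is what Proposition~\ref{prop3.2} will address. The bounded domain description is then simply the reflection, on the mirror side, of the finite extent of the moment polytope $\bar{P}$.
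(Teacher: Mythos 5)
Your proof is correct and follows essentially the same route as the paper: the paper identifies $Y_{HV}$ with $M_\mathbb{R}(r)\times\sqrt{-1}T_M$ via the dualized exact sequences and then observes that $P=\check{\iota}^{-1}(r)\cap\{X_i>0\}$ corresponds to $|Z_i|<1$, while you write out the same identification through the explicit monomial embedding $Z_i=e^{\lambda_i}z^{v_i}$ (which the paper records in the remark immediately following its proof). The only cosmetic quibble is that the modulus computation $|Z_i|=\exp(-(\langle x,v_i\rangle-\lambda_i))$ is a direct consequence of $|z_j|=e^{-x_j}$ and does not require invoking Cho--Oh's area formula.
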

\begin{proof}
Dualizing the sequence (\ref{seq3.2}), we get
\begin{equation*}
1\longrightarrow T_M\overset{\check{\partial}}{\longrightarrow}
(T^d)^\vee\overset{\check{\iota}}{\longrightarrow}(T_K)^\vee\longrightarrow1,
\end{equation*}
while we also have the sequence (\ref{seq3.2'})
\begin{equation*}
0\longrightarrow
M_\mathbb{R}\overset{\check{\partial}}{\longrightarrow}(\mathbb{R}^d)^\vee
\overset{\check{\iota}}{\longrightarrow}K^\vee_\mathbb{R}\longrightarrow0.
\end{equation*}
Let $T_i=X_i+\sqrt{-1}Y_i\in\mathbb{C}/2\pi\sqrt{-1}\mathbb{Z}$,
$i=1,\ldots,d$, be the complex coordinates on
$(\mathbb{R}^d)^\vee\times\sqrt{-1}(T^d)^\vee\cong(\mathbb{C}^*)^d$.
If we let $Z_i=e^{-T_i}\in\mathbb{C}^*$, $i=1,\ldots,d$, then, by
the definition of $Y_{HV}$ and by (\ref{iota*}), we can identify
$Y_{HV}$ with the following complex submanifold in
$(\mathbb{C}^*)^d$:
\begin{equation*}
\check{\iota}^{-1}(r\in
K^\vee_\mathbb{R})\cap\check{\iota}^{-1}(1\in
(T_K)^\vee)\subset(\mathbb{R}^d)^\vee\times\sqrt{-1}(T^d)^\vee=(\mathbb{C}^*)^d.
\end{equation*}
Hence,
$$Y_{HV}=M_\mathbb{R}(r)\times\sqrt{-1}T_M\cong(\mathbb{C}^*)^n$$
as complex submanifolds in $(\mathbb{C}^*)^d$. Since
$Y_{SYZ}=TP/M=P\times\sqrt{-1}T_M$, $Y_{SYZ}$ is a complex
submanifold in $Y_{HV}$. In fact, as
$P=\check{\iota}^{-1}(r)\cap\{(X_1,\ldots,X_d)\in(\mathbb{R}^d)^*:X_i>0,i=1,\ldots,d\}$,
we have
$$Y_{SYZ}=\{(Z_1,\ldots,Z_d)\in Y_{HV}:|Z_i|<1,\textrm{ for $i=1,\ldots,d$}\}.$$
So $Y_{SYZ}$ is a bounded domain in $Y_{HV}$.
\end{proof}
We remark that, in terms of the complex coordinates
$z_j=\exp(-x_j-\sqrt{-1}y_j)$, $j=1,\ldots,n$, on
$Y_{HV}=M_\mathbb{R}(r)\times\sqrt{-1}T_M\cong(\mathbb{C}^*)^n$, the
embedding $\check{\partial}:Y_{HV}\hookrightarrow(\mathbb{C}^*)^d$
is given by
$$\check{\partial}(z_1,\ldots,z_n)=(e^{\lambda_1}z^{v_1},\ldots,e^{\lambda_d}z^{v_d}),$$
where $z^v$ denotes the monomial $z_1^{v^1}\ldots z_n^{v^n}$ if
$v=(v^1,\ldots,v^n)\in N=\mathbb{Z}^n$. So the coordinates $Z_i$'s
and $z_i$'s are related by
$$Z_i=e^{\lambda_i}z^{v_i},$$
for $i=1,\ldots,d$, and the SYZ mirror manifold $Y_{SYZ}$ is given
by the bounded domain
$$Y_{SYZ}=\{(z_1,\ldots,z_n)\in Y_{HV}=(\mathbb{C}^*)^n:|e^{\lambda_i}z^{v_i}|<1,\ i=1,\ldots,d\}.$$
Now, the superpotential $W:Y_{SYZ}\rightarrow\mathbb{C}$ (or
$W:Y_{HV}\rightarrow\mathbb{C}$) is of the form
$$W=e^{\lambda_1}z^{v_1}+\ldots+e^{\lambda_d}z^{v_d}.$$

From the above proposition, the SYZ mirror manifold $Y_{SYZ}$ is
strictly \textit{smaller} than Hori-Vafa's mirror manifold $Y_{HV}$.
This issue was discussed in Hori-Vafa \cite{HV00}, Section 3 and
Auroux \cite{Auroux07}, Section 4.2, and may be resolved by a
process called \textit{renormalization}. We refer the interested
reader to those references for the details. In this paper, we shall
always be (except in this subsection) looking at the SYZ mirror
manifold, and the letter $Y$ will also be used exclusively to denote
the SYZ mirror manifold.

\subsection{SYZ mirror transformations as fiberwise Fourier transforms}\label{subsec3.2}

In this subsection, we first give a brief review of semi-flat SYZ
mirror transformations (for details, see Hitchin \cite{Hitchin97},
Leung-Yau-Zaslow \cite{LYZ00} and Leung \cite{Leung00}). Then we
introduce the SYZ mirror transformations for toric Fano manifolds,
and prove part 1. of Theorem~\ref{main_thm}.\\

To begin with, recall that the dual torus $T_M=(T_N)^\vee$ of $T_N$
can be interpreted as the moduli space of flat $U(1)$-connections on
the trivial line bundle $T_N\times\mathbb{C}\rightarrow T_N$. In
more explicit terms, a point $y=(y_1,\ldots,y_n)\in
M_\mathbb{R}\cong\mathbb{R}^n$ corresponds to the flat
$U(1)$-connection $\nabla_y=d+\frac{\sqrt{-1}}{2}\sum_{j=1}^n y_j
du_j$ on the trivial line bundle $T_N\times\mathbb{C}\rightarrow
T_N$. The \textit{holonomy} of this connection is given, in our
convention, by the map
$$\textrm{hol}_{\nabla_y}:N\rightarrow U(1),\ v\mapsto
e^{-\sqrt{-1}\langle y,v\rangle}.$$ $\nabla_y$ is gauge equivalent
to the trivial connection $d$ if and only if
$(y_1,\ldots,y_n)\in(2\pi\mathbb{Z})^n=M$. So, in the following, we
will regard $(y_1,\ldots,y_n)\in
T_M\cong\mathbb{R}^n/(2\pi\mathbb{Z})^n$. Moreover, this
construction gives all flat $U(1)$-connections on
$T_N\times\mathbb{C}\rightarrow T_N$ up to unitary gauge
transformations. The universal $U(1)$-bundle $\mathcal{P}$, i.e. the
Poincar\'{e} line bundle, is the trivial line bundle over the
product $T_N\times T_M$ equipped with the $U(1)$-connection
$d+\frac{\sqrt{-1}}{2}\sum_{j=1}^n (y_jdu_j-u_jdy_j)$, where
$u_1,\ldots,u_n\in\mathbb{R}/2\pi\mathbb{Z}$ are the coordinates on
$T_N\cong\mathbb{R}^d/(2\pi\mathbb{Z})^d$. The curvature of this
connection is given by the two-form
$$F=\sqrt{-1}\sum_{j=1}^n dy_j\wedge du_j\in\Omega^2(T_N\times T_M).$$

From this perspective, the SYZ mirror manifold $Y$ is the moduli
space of pairs $(L_x,\nabla_y)$, where $L_x$ ($x\in P$) is a
Lagrangian torus fiber of $\mu:X\rightarrow P$ and $\nabla_y$ is a
flat $U(1)$-connection on the trivial line bundle
$L_x\times\mathbb{C}\rightarrow L_x$. The construction of the mirror
manifold in this way is originally advocated in the SYZ Conjecture
\cite{SYZ96} (cf. Hitchin \cite{Hitchin97} and Sections 2 and 4 in
Auroux \cite{Auroux07}).

Now recall that we have the dual torus bundles $\mu:X\rightarrow
P\textrm{ and }\nu:Y\rightarrow P$. Consider their fiber product
$X\times_P Y=P\times\sqrt{-1}(T_N\times T_M)$.
\begin{equation*}
\begin{CD}
  X\times_P Y   @>\pi_Y>>   Y \\
  @VV\pi_X V    @VV \nu V \\
  X             @>\mu>>   P
\end{CD}\\
\end{equation*}
By abuse of notations, we still use $F$ to denote the fiberwise
universal curvature two-form $\sqrt{-1}\sum_{j=1}^n dy_j\wedge
du_j\in\Omega^2(X\times_P Y)$.
\begin{defn}\label{def3.1}
The semi-flat SYZ mirror transformation
$\mathcal{F}^{\textrm{sf}}:\Omega^*(X)\rightarrow\Omega^*(Y)$ is
defined by
\begin{eqnarray*}
\mathcal{F}^{\textrm{sf}}(\alpha) & = &
(-2\pi\sqrt{-1})^{-n}\pi_{Y,*}(\pi_X^*(\alpha)\wedge e^{\sqrt{-1}F})\\
& = & (-2\pi\sqrt{-1})^{-n}\int_{T_N}\pi_X^*(\alpha)\wedge
e^{\sqrt{-1}F},
\end{eqnarray*}
where $\pi_X:X\times_P Y\rightarrow X$ and $\pi_Y:X\times_P
Y\rightarrow Y$ are the two natural projections.
\end{defn}
The key point is that, the semi-flat SYZ mirror transformation
$\mathcal{F}^{\textrm{sf}}$ transforms the (exponential of
$\sqrt{-1}$ times the) symplectic structure
$\omega_X=\sum_{j=1}^ndx_j\wedge du_j$ on $X$ to the holomorphic
$n$-form
$\Omega_Y=\frac{dz_1}{z_1}\wedge\ldots\wedge\frac{dz_n}{z_n}$ on
$Y$, where $z_j=\exp(-x_j-\sqrt{-1}y_j)$, $j=1,\ldots,n$. This is
probably well-known and implicitly contained in the literature, but
we include a proof here because we cannot find a suitable
reference.\\

\begin{prop}~\label{prop3.2}
We have
$$\mathcal{F}^{\textrm{sf}}(e^{\sqrt{-1}\omega_X})=\Omega_Y.$$
Moreover, if we define the inverse SYZ transformation
$(\mathcal{F}^{\textrm{sf}})^{-1}:\Omega^*(Y)\rightarrow\Omega^*(X)$
by
\begin{eqnarray*}
(\mathcal{F}^{sf})^{-1}(\alpha) & = &
(-2\pi\sqrt{-1})^{-n}\pi_{X,*}(\pi_Y^*(\alpha)\wedge e^{-\sqrt{-1}F})\\
& = & (-2\pi\sqrt{-1})^{-n}\int_{T_M}\pi_Y^*(\alpha)\wedge
e^{-\sqrt{-1}F},
\end{eqnarray*}
then we also have
$$(\mathcal{F}^{\textrm{sf}})^{-1}(\Omega_Y)=e^{\sqrt{-1}\omega_X}.$$
\end{prop}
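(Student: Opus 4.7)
The plan is to prove both identities by a direct computation in the symplectic/complex coordinates $(x_j, u_j)$ on $X$ and $(x_j, y_j)$ on $Y$, exploiting the fact that all the relevant forms factor through the $n$ copies of the $du_j$-directions. I will first carry out the semi-flat transformation, then note that the inverse is the same calculation with the sign of $F$ flipped.

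First, on the correspondence space $X \times_P Y$ I will combine the two exponentials into one, writing
\begin{equation*}
\sqrt{-1}\,\pi_X^*\omega_X + \sqrt{-1} F \;=\; \sum_{j=1}^{n} (\sqrt{-1}\, dx_j - dy_j)\wedge du_j \;=\; \sum_{j=1}^{n} A_j,
\end{equation*}
where each 2-form $A_j = (\sqrt{-1} dx_j - dy_j)\wedge du_j$ satisfies $A_j^2 = 0$ (since $du_j \wedge du_j = 0$) and the $A_j$ commute with one another (they are of even degree). Thus $e^{\sum_j A_j} = \prod_j (1+A_j)$, and the fiberwise integration $\pi_{Y,*} = \int_{T_N}$ kills every term that does not contain the top-degree form $du_1 \wedge \cdots \wedge du_n$, leaving only $A_1 \wedge \cdots \wedge A_n$.

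Second, I will reorder this product into the form $(\text{something involving }dx,dy)\wedge du_1\wedge\cdots\wedge du_n$, picking up a sign from pushing each 1-form $du_j$ past the 1-forms $\sqrt{-1}dx_k - dy_k$ with $k>j$. The integration $\int_{T_N} du_1\wedge\cdots\wedge du_n = (2\pi)^n$ then combines with the normalization $(-2\pi\sqrt{-1})^{-n}$. The remaining wedge product simplifies via the identity
\begin{equation*}
\sqrt{-1}\,dx_j - dy_j \;=\; -\sqrt{-1}\,(dx_j + \sqrt{-1}\,dy_j),
\end{equation*}
so that after absorbing the factors of $-\sqrt{-1}$ one recognizes exactly $\Omega_Y = \bigwedge_j (-dx_j - \sqrt{-1}\,dy_j)$.

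For the second identity, the computation is formally identical: on $X\times_P Y$ one has
\begin{equation*}
\pi_Y^*\Omega_Y - \sqrt{-1} F = \bigwedge_{j}(-dx_j - \sqrt{-1}\,dy_j) - \sqrt{-1}\cdot\sqrt{-1}\sum_{j} dy_j \wedge du_j,
\end{equation*}
and once again the only contribution to $\int_{T_M}$ (of total $M$-degree $n$ in the $dy_j$) comes from pairing the $dy_j$ factors in $\Omega_Y$ with the $du_j$'s coming from $e^{-\sqrt{-1}F}$. After the same kind of reordering and the normalization $(-2\pi\sqrt{-1})^{-n}\int_{T_M}$, the surviving wedge product recombines into $\prod_j dx_j \wedge du_j = \omega_X^n / n!$, and summing all terms (corresponding to the full expansion of $e^{-\sqrt{-1}F}$ paired against subsets of the factors of $\Omega_Y$) reconstructs $e^{\sqrt{-1}\omega_X}$. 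The main obstacle is purely bookkeeping: keeping track of the signs $(-1)^{n(n-1)/2}$, the orientation of $T_N$ versus $T_M$, and the powers of $\sqrt{-1}$ so that all the prefactors cancel. There is no conceptual difficulty, but this is the part of the argument where one must be most careful.
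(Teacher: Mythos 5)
Your proposal follows essentially the same route as the paper's proof: combine $\sqrt{-1}\pi_X^*\omega_X+\sqrt{-1}F$ into $\sum_j(\sqrt{-1}\,dx_j-dy_j)\wedge du_j$, expand the exponential as $\prod_j(1+A_j)$, observe that only the term containing $du_1\wedge\cdots\wedge du_n$ (respectively $dy_1\wedge\cdots\wedge dy_n$ for the inverse) survives the fiberwise integration, and reassemble the remaining factors into $\Omega_Y$ (respectively $e^{\sqrt{-1}\omega_X}$). One small slip: the auxiliary identity should read $\sqrt{-1}\,dx_j-dy_j=\sqrt{-1}\,(dx_j+\sqrt{-1}\,dy_j)=-\sqrt{-1}\,(-dx_j-\sqrt{-1}\,dy_j)$, not $-\sqrt{-1}\,(dx_j+\sqrt{-1}\,dy_j)$, but this does not affect the argument.
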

\begin{proof}
The proof is by straightforward computations.
\begin{eqnarray*}
\mathcal{F}^{\textrm{sf}}(e^{\sqrt{-1}\omega_X}) & = &
(-2\pi\sqrt{-1})^{-n}\int_{T_N}\pi_{X}^*(e^{\sqrt{-1}\omega_X})\wedge e^{\sqrt{-1}F}\\
& = & (-2\pi\sqrt{-1})^{-n}\int_{T_N}e^{\sqrt{-1}\sum_{j=1}^n (dx_j+\sqrt{-1}dy_j)\wedge du_j}\\
& = & (-2\pi\sqrt{-1})^{-n}\int_{T_N} \bigwedge_{j=1}^n\big(1+\sqrt{-1}(dx_j+\sqrt{-1}dy_j)\wedge du_j\big)\\
& = & (2\pi)^{-n}\int_{T_N}
\Bigg(\bigwedge_{j=1}^n(-dx_j-\sqrt{-1}dy_j)\Bigg)\wedge
du_1\wedge\ldots\wedge du_n\\
& = & \Omega_Y,
\end{eqnarray*}
where we have $\int_{T_N}du_1\wedge\ldots\wedge du_n=(2\pi)^n$ for
the last equality. On the other hand,
\begin{eqnarray*}
(\mathcal{F}^{\textrm{sf}})^{-1}(\Omega_Y) & = &
(-2\pi\sqrt{-1})^{-n}\int_{T_M}\pi_Y^*(\Omega_Y)\wedge e^{-\sqrt{-1}F}\\
& = & (-2\pi\sqrt{-1})^{-n}\int_{T_M}
\Bigg(\bigwedge_{j=1}^n(-dx_j-\sqrt{-1}dy_j)\Bigg)\wedge
e^{\sum_{j=1}^n dy_j\wedge du_j}\\
& = & (2\pi\sqrt{-1})^{-n}\int_{T_M}\bigwedge_{j=1}^n\big((dx_j+\sqrt{-1}dy_j)\wedge e^{dy_j\wedge du_j}\big)\\
%& = & i^{-n}\int_{T_M} \bigwedge_{j=1}^n\Big((dx_j+idy_j)\wedge(1+idu_j\wedge dy_j)\Big)\\
& = & (2\pi\sqrt{-1})^{-n}\int_{T_M}\bigwedge_{j=1}^n\big(dx_j+\sqrt{-1}dy_j-dx_j\wedge du_j\wedge dy_j\big)\\
& = & (2\pi)^{-n}\int_{T_M}\bigwedge_{j=1}^n \Big(1+\sqrt{-1}dx_j\wedge du_j\Big)\wedge dy_j\\
& = &
(2\pi)^{-n}\int_{T_M}\bigwedge_{j=1}^n\big(e^{\sqrt{-1}dx_j\wedge
du_j}\wedge dy_j\big)
\end{eqnarray*}
\begin{eqnarray*}
& = & (2\pi)^{-n}\int_{T_M}e^{\sqrt{-1}\sum_{j=1}^n dx_j\wedge du_j}\wedge dy_1\wedge\ldots\wedge dy_n\\
& = & e^{\sqrt{-1}\omega_X},
\end{eqnarray*}
where we again have $\int_{T_M}dy_1\wedge\ldots\wedge dy_n=(2\pi)^n$
in the last step.
\end{proof}
One can also apply the semi-flat SYZ mirror transformations to other
geometric structures and objects. For details, see Leung
\cite{Leung00}.\\

The semi-flat SYZ mirror transformation $\mathcal{F}^{\textrm{sf}}$
can transform the symplectic structure $\omega_X$ on $X$ to the
holomorphic $n$-form $\Omega_Y$ on $Y$. However, as we mentioned in
the introduction, we are not going to obtain the superpotential
$W:Y\rightarrow\mathbb{C}$ in this way because we have ignored the
toric boundary divisor $\bar{X}\setminus X=D_\infty=\bigcup_{i=1}^d
D_i$. Indeed, it is the toric boundary divisor $D_\infty$ which
gives rise to the quantum corrections in the A-model of $\bar{X}$.
More precisely, these quantum corrections are due to the existence
of holomorphic discs in $\bar{X}$ with boundary in Lagrangian torus
fibers which have intersections with the divisor $D_\infty$. To
restore this information, our way out is to look at the (trivial)
$\mathbb{Z}^n$-cover
$$\pi:LX=X\times N\rightarrow X.$$
Recall that we equip $LX$ with the symplectic structure
$\pi^*(\omega_X)$; we will confuse the notations and use $\omega_X$
to denote either the symplectic structure on $X$ or that on $LX$. We
will further abuse the notations by using $\mu$ to denote the
fibration
$$\mu:LX\rightarrow P,$$
which is the composition of the map $\pi:LX\rightarrow X$ with
$\mu:X\rightarrow P$.\\

We are now ready to define the SYZ mirror transformation
$\mathcal{F}$ for the toric Fano manifold $\bar{X}$. It will be
constructed as \textit{a combination of the semi-flat SYZ
transformation $\mathcal{F}^{\textrm{sf}}$ and taking fiberwise
Fourier series}.

Analog to the semi-flat case, consider the fiber product
$$LX\times_P Y=P\times N\times\sqrt{-1}(T_N\times T_M)$$
of the maps $\mu:LX\rightarrow P$ and $\nu:Y\rightarrow P$.
\begin{equation*}
\begin{CD}
  LX\times_P Y   @>\pi_Y>>   Y \\
  @VV\pi_{LX} V    @VV \nu V \\
  LX             @>\mu>>   P
\end{CD}\\
\end{equation*}
Note that we have a covering map $LX\times_P Y\rightarrow X\times_P
Y$. Pulling back $F\in\Omega^2(X\times_P Y)$ to $LX\times_P Y$ by
this covering map, we get the fiberwise universal curvature two-form
$$F=\sqrt{-1}\sum_{j=1}^n dy_j\wedge du_j\in\Omega^2(LX\times_P Y).$$
We further define the \emph{holonomy function}
$\textrm{hol}:LX\times_P Y\rightarrow U(1)$ as follows. For
$(p,v)\in LX$ and $z=(z_1,\ldots,z_n)\in Y$ such that
$\mu(p)=\nu(z)=:x\in P$, we let $x=(x_1,\ldots,x_n)$, and write
$z_j=\exp(-x_j-\sqrt{-1}y_j)$, so that $y=(y_1,\ldots,y_n)\in
(L_x)^\vee:=\nu^{-1}(x)\subset Y$. Then we set
$$\textrm{hol}(p,v,z):=\textrm{hol}_{\nabla_y}(v)=e^{-\sqrt{-1}\langle y,v\rangle},$$
where $\nabla_y$ is the flat $U(1)$-connection on the trivial line
bundle $L_x\times\mathbb{C}\rightarrow L_x$ over $L_x:=\mu^{-1}(x)$
corresponding to the point $y\in(L_x)^\vee$.
\begin{defn}\label{def3.2}
The SYZ mirror transformation
$\mathcal{F}:\Omega^*(LX)\rightarrow\Omega^*(Y)$ for the toric Fano
manifold $\bar{X}$ is defined by
\begin{eqnarray*}
\mathcal{F}(\alpha) & = &
(-2\pi\sqrt{-1})^{-n}\pi_{Y,*}(\pi_{LX}^*(\alpha)\wedge e^{\sqrt{-1}F}\textrm{hol})\\
& = & (-2\pi\sqrt{-1})^{-n}\int_{N\times
T_N}\pi_{LX}^*(\alpha)\wedge e^{\sqrt{-1}F}\textrm{hol},
\end{eqnarray*}
where $\pi_{LX}:LX\times_P Y\rightarrow LX$ and $\pi_Y:LX\times_P
Y\rightarrow Y$ are the two natural projections.
\end{defn}
Before stating the basic properties of $\mathcal{F}$, we introduce
the class of functions on $LX$ relevant to our applications.
\begin{defn}\label{def3.3} A $T_N$-invariant function
$f:LX\rightarrow\mathbb{C}$ is said to be admissible if for any
$(p,v)\in LX=X\times N$,
$$f(p,v)=f_v e^{-\langle x,v\rangle},$$
where $x=\mu(p)\in P$ and $f_v\in\mathbb{C}$ is a constant, and the
fiberwise Fourier series
\begin{equation*}
\widehat{f}:=\sum_{v\in N}f_v e^{-\langle
x,v\rangle}\textrm{hol}_{\nabla_y}(v)=\sum_{v\in N}f_vz^v,
\end{equation*}
where $z^v=\exp(\langle-x-\sqrt{-1}y,v\rangle)$, is convergent and
analytic, as a function on $Y$. We denote by $\mathcal{A}(LX)\subset
C^\infty(LX)$ set of all admissible functions on $LX$.
\end{defn}

Examples of admissible functions on $LX$ include those
$T_N$-invariant functions which are not identically zero on
$X\times\{v\}\subset LX$ for only finitely many $v\in N$. In
particular, the functions $\Psi_1,\ldots,\Psi_d$ are all in
$\mathcal{A}(LX)$. We will see (in the proof of
Theorem~\ref{thm3.2}) shortly that $\Phi_q$ is also admissible.

Now, for functions $f,g\in\mathcal{A}(LX)$, we define their
\textit{convolution product} $f\star g:LX\rightarrow\mathbb{C}$, as
before, by
$$(f\star g)(p,v)=\sum_{v_1,v_2\in N,\ v_1+v_2=v}f(p,v_1)g(p,v_2).$$
That the right-hand-side is convergent can be seen as follows. By
definition, $f,g\in\mathcal{A}(LX)$ implies that for any $p\in X$
and any $v_1,v_2\in N$,
$$f(p,v_1)=f_{v_1}e^{-\langle x,v_1\rangle},\ g(p,v_2)=g_{v_2}e^{-\langle x,v_2\rangle},$$
where $x=\mu(p)$ and $f_{v_1},g_{v_2}\in\mathbb{C}$ are constants;
also, the series $\widehat{f}=\sum_{v_1\in N}f_{v_1}z^{v_1}$ and
$\widehat{g}=\sum_{v_2\in N}g_{v_2}z^{v_2}$ are convergent and
analytic. Then their product, given by
\begin{eqnarray*}
\widehat{f}\cdot\widehat{g}=\Bigg(\sum_{v_1\in
N}f_{v_1}z^{v_1}\Bigg)\Bigg(\sum_{v_2\in
N}g_{v_2}z^{v_2}\Bigg)=\sum_{v\in N}\Bigg(\sum_{\substack{v_1,v_2\in
N,\\ v_1+v_2=v}}f_{v_1}g_{v_2}\Bigg)z^v,
\end{eqnarray*}
is also analytic. This shows that the convolution product $f\star g$
is well defined and gives another admissible function on $LX$.
Hence, the $\mathbb{C}$-vector space $\mathcal{A}(LX)$, together
with the convolution product $\star$, forms a
$\mathbb{C}$-algebra.\\

Let $\mathcal{O}(Y)$ be the $\mathbb{C}$-algebra of holomorphic
functions on $Y$. Recall that $Y=TP/M=P\times\sqrt{-1}T_M$. For
$\phi\in\mathcal{O}(Y)$, the restriction of $\phi$ to a fiber
$(L_x)^\vee=\nu^{-1}(x)\cong T_M$ gives a $C^\infty$ function
$\phi_x:T_M\rightarrow\mathbb{C}$ on the torus $T_M$. For $v\in N$,
the $v$-th Fourier coefficient of $\phi_x$ is given by
$$\widehat{\phi}_x(v)=\int_{T_M}\phi_x(y)e^{\sqrt{-1}\langle y,v\rangle}dy_1\wedge\ldots\wedge dy_n.$$
Then, we define a function $\widehat{\phi}:LX\rightarrow\mathbb{C}$
on $LX$ by
$$\widehat{\phi}(p,v)=\widehat{\phi}_x(v),$$
where $x=\mu(p)\in P$. $\widehat{\phi}$ is clearly admissible. We
call the process,
$\phi\in\mathcal{O}(Y)\mapsto\widehat{\phi}\in\mathcal{A}(LX)$,
taking \textit{fiberwise Fourier coefficients}. The following lemma
follows from the standard theory of Fourier analysis on tori (see,
for example, Edwards \cite{Edwards79}).
\begin{lem}\label{lem3.1}
Taking fiberwise Fourier series, i.e. the map
$$\mathcal{A}(LX)\rightarrow\mathcal{O}(Y),\quad f\mapsto\widehat{f}$$
is an isomorphism of $\mathbb{C}$-algebras, where we equip
$\mathcal{A}(LX)$ with the convolution product and $\mathcal{O}(Y)$
with the ordinary product of functions. The inverse is given by
taking fiberwise Fourier coefficients. In particular,
$\widehat{\widehat{f}}=f$ for any $f\in\mathcal{A}(LX)$.
\end{lem}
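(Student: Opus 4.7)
The plan is to reduce everything to classical Fourier analysis on the real torus $T_M = M_\mathbb{R}/M$, noting that on each fiber $\nu^{-1}(x) \cong T_M$ of $\nu : Y \to P$, the map $f \mapsto \widehat{f}$ is literally a Fourier series expansion with Fourier coefficients $f_v e^{-\langle x, v\rangle}$, and the inverse $\phi \mapsto \widehat{\phi}$ is the recovery of those coefficients by integration over $T_M$. The entire lemma is then a bundle version of the classical fact that the Fourier transform intertwines convolution with pointwise multiplication.

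First I would verify that the forward map is well-defined. Given $f \in \mathcal{A}(LX)$ with $f(p,v) = f_v e^{-\langle x,v\rangle}$, the definition of admissibility already guarantees that $\widehat{f}(z) = \sum_{v \in N} f_v z^v$ converges and is analytic on $Y$; since the monomials $z^v = e^{-\langle x + \sqrt{-1}y, v\rangle}$ are holomorphic on $Y \cong (\mathbb{C}^*)^n$ and the series converges uniformly on compact subsets, $\widehat{f}$ is holomorphic. Next I would check multiplicativity by a direct computation: starting from the definition of $\star$ one has $(f \star g)_v = \sum_{v_1 + v_2 = v} f_{v_1} g_{v_2}$, so
\begin{equation*}
\widehat{f \star g}(z) = \sum_{v} \Big( \sum_{v_1 + v_2 = v} f_{v_1} g_{v_2} \Big) z^v = \Big( \sum_{v_1} f_{v_1} z^{v_1} \Big) \Big( \sum_{v_2} g_{v_2} z^{v_2} \Big) = \widehat{f}(z) \cdot \widehat{g}(z),
\end{equation*}
where the rearrangement is justified by absolute convergence on any compact subset of $Y$.

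For bijectivity I would argue in the other direction. Given $\phi \in \mathcal{O}(Y)$, its restriction $\phi_x$ to the fiber over $x \in P$ is smooth on $T_M$, so classical Fourier theory expands it as $\phi_x(y) = \sum_v c_v(x) e^{-\sqrt{-1}\langle y,v\rangle}$ where, up to the normalization implicit in the definition, $c_v(x) = \widehat{\phi}_x(v)$. The crucial observation is that holomorphicity of $\phi$ in the complex coordinates $z_j = \exp(-x_j - \sqrt{-1} y_j)$ forces each $c_v(x)$ to have the specific form $c_v(x) = c_v e^{-\langle x, v\rangle}$ for a constant $c_v$: indeed writing the Cauchy--Riemann equations $\partial \phi/\partial \bar z_j = 0$ in terms of the mixed real--angular coordinates $(x,y)$ and inserting the Fourier expansion, the equations become ODEs in $x$ whose only solutions are the exponentials above. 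This proves that $\widehat{\phi}(p,v) := \widehat{\phi}_x(v)$ is admissible, and that the compositions $\phi \mapsto \widehat{\phi} \mapsto \widehat{\widehat{\phi}}$ and $f \mapsto \widehat{f} \mapsto \widehat{\widehat{f}}$ reduce on each fiber to the standard Fourier inversion identity, which is an isomorphism on $C^\infty(T_M)$.

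The main obstacle, and the only nontrivial analytic input, is to pin down these convergence and normalization issues carefully: one must check absolute and locally uniform convergence of $\sum f_v z^v$ on compact subsets of $Y$ (which is built into the admissibility hypothesis), justify termwise manipulations of the Fourier series including the Cauchy product identity above, and fix the $(2\pi)^n$ factor in the definition of $\widehat{\phi}_x(v)$ so that Fourier inversion yields $\widehat{\widehat{f}} = f$ on the nose rather than up to a constant. Everything else is essentially bookkeeping over the base $P$, since the construction is pointwise in $x \in P$.
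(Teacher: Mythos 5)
Your proposal is correct and follows exactly the route the paper intends: the paper gives no argument beyond citing "the standard theory of Fourier analysis on tori" (Edwards), and your proof is precisely that theory applied fiberwise, with the one genuinely non-classical ingredient — that holomorphicity of $\phi$ in $z_j=\exp(-x_j-\sqrt{-1}y_j)$ forces the fiberwise Fourier coefficients to be $c_ve^{-\langle x,v\rangle}$, i.e.\ forces admissibility — correctly identified and justified via the Cauchy--Riemann equations. Your observation that the paper's formula for $\widehat{\phi}_x(v)$ is missing a $(2\pi)^{-n}$ normalization (without which $\widehat{\widehat{f}}=(2\pi)^nf$) is also accurate.
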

The basic properties of the SYZ mirror transformation $\mathcal{F}$
are summarized in the following theorem.
\begin{thm}\label{thm3.1}
Let
$\mathcal{A}(LX)e^{\sqrt{-1}\omega_X}:=\{fe^{\sqrt{-1}\omega_X}:f\in
\mathcal{A}(LX)\}\subset\Omega^*(LX)$ and
$\mathcal{O}(Y)\Omega_Y:=\{\phi\Omega_Y:\phi\in\mathcal{O}(Y)\}\subset\Omega^*(Y)$.
\begin{enumerate}

\item[(i)] For any admissible function $f\in\mathcal{A}(LX)$,
$$\mathcal{F}(fe^{\sqrt{-1}\omega_X})=\widehat{f}\Omega_Y\in\mathcal{O}(Y)\Omega_Y.$$

\item[(ii)] If we define the inverse SYZ mirror transformation
$\mathcal{F}^{-1}: \Omega^*(Y)\rightarrow\Omega^*(LX)$ by
\begin{eqnarray*}
\mathcal{F}^{-1}(\alpha) & = &
(-2\pi\sqrt{-1})^{-n}\pi_{LX,*}(\pi_Y^*(\alpha)\wedge e^{-\sqrt{-1}F}\textrm{hol}^{-1})\\
& = & (-2\pi\sqrt{-1})^{-n}\int_{T_M}\pi_Y^*(\alpha)\wedge
e^{-\sqrt{-1}F}\textrm{hol}^{-1},
\end{eqnarray*}
where $\textrm{hol}^{-1}:LX\times_P Y\rightarrow\mathbb{C}$ is the
function defined by
$\textrm{hol}^{-1}(p,v,z)=1/\textrm{hol}(p,v,z)=e^{\sqrt{-1}\langle
y,v\rangle}$, for any $(p,v,z)\in L_X\times_P Y$, then
$$\mathcal{F}^{-1}(\phi\Omega_Y)=\widehat{\phi}e^{\sqrt{-1}\omega_X}\in\mathcal{A}(LX)e^{\sqrt{-1}\omega_X},$$
for any $\phi\in\mathcal{O}(Y)$.

\item[(iii)] The restriction map
$\mathcal{F}:\mathcal{A}(LX)e^{\sqrt{-1}\omega_X}
\rightarrow\mathcal{O}(Y)\Omega_Y$ is a bijection with inverse
$\mathcal{F}^{-1}:\mathcal{O}(Y)\Omega_Y\rightarrow\mathcal{A}(LX)e^{\sqrt{-1}\omega_X}$,
i.e. we have
\begin{equation*}
\mathcal{F}^{-1}\circ\mathcal{F}=\textrm{Id}_{\mathcal{A}(LX)e^{\sqrt{-1}\omega_X}},\
\mathcal{F}\circ\mathcal{F}^{-1}=\textrm{Id}_{\mathcal{O}(Y)\Omega_Y}.
\end{equation*}
This shows that the SYZ mirror transformation $\mathcal{F}$ has the
inversion property.
\end{enumerate}
\end{thm}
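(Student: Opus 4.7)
The plan is to observe that the SYZ mirror transformation $\mathcal{F}$ factors conceptually as the composition of (a) the semi-flat SYZ transformation $\mathcal{F}^{\textrm{sf}}$ applied in the angle direction of $T_N$, together with (b) the fiberwise Fourier series in the $N$-direction provided by Lemma~\ref{lem3.1}. Once this factorization is made explicit, each of the three parts of the theorem reduces to a direct computation using Proposition~\ref{prop3.2} and Lemma~\ref{lem3.1}.

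For part (i), I would start with the definition
$$\mathcal{F}(fe^{\sqrt{-1}\omega_X}) = (-2\pi\sqrt{-1})^{-n}\int_{N\times T_N}\pi_{LX}^*(fe^{\sqrt{-1}\omega_X})\wedge e^{\sqrt{-1}F}\,\textrm{hol},$$
and note that, since $f$ is $T_N$-invariant and admissible, $f(p,v) = f_v e^{-\langle x,v\rangle}$ depends only on the base coordinate $x=\mu(p)$ and on $v\in N$, while $\textrm{hol}(p,v,z) = e^{-\sqrt{-1}\langle y,v\rangle}$ depends only on $v$ and on the dual angle $y$. Because neither of these factors involves the $T_N$-angle variables $u_j$, they pull out of the $T_N$-integral, reducing the integration over $T_N$ to exactly the semi-flat integral treated in Proposition~\ref{prop3.2}. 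The sum over $v\in N$ then reassembles precisely to $\sum_{v\in N} f_v z^v = \widehat{f}$, yielding $\mathcal{F}(fe^{\sqrt{-1}\omega_X}) = \widehat{f}\,\Omega_Y$. Convergence is not an issue because admissibility is tailored to make $\widehat{f}$ an analytic function on $Y$.

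Part (ii) is completely analogous in the dual direction. Given $\phi\in\mathcal{O}(Y)$, I would expand $\phi$ into its fiberwise Fourier series $\phi(x,y) = \sum_{v\in N}\widehat{\phi}(p,v)\, e^{\sqrt{-1}\langle y,v\rangle}$ on each fiber of $\nu:Y\to P$, substitute into the definition of $\mathcal{F}^{-1}$, and observe that the $T_M$-integral pairs $e^{\sqrt{-1}\langle y,v\rangle}$ with the inverse holonomy $\textrm{hol}^{-1} = e^{\sqrt{-1}\langle y,v\rangle}$ through the Fourier orthogonality relations on $T_M$. The remaining integration in the $y$-direction against $\Omega_Y\wedge e^{-\sqrt{-1}F}$ is exactly the inverse semi-flat computation of Proposition~\ref{prop3.2}, producing $e^{\sqrt{-1}\omega_X}$. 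Collecting the Fourier coefficients gives $\widehat{\phi}\, e^{\sqrt{-1}\omega_X}$.

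Part (iii) is then formal. By (i) the image of $\mathcal{F}$ on $\mathcal{A}(LX)e^{\sqrt{-1}\omega_X}$ lies in $\mathcal{O}(Y)\Omega_Y$, and by (ii) the image of $\mathcal{F}^{-1}$ lies in $\mathcal{A}(LX)e^{\sqrt{-1}\omega_X}$. Composing and using $\widehat{\widehat{f}} = f$ from Lemma~\ref{lem3.1} on both sides gives the asserted identities $\mathcal{F}^{-1}\circ\mathcal{F} = \textrm{Id}$ and $\mathcal{F}\circ\mathcal{F}^{-1} = \textrm{Id}$. The main technical point — if there is one — is ensuring that all the manipulations (pulling $f$ and $\textrm{hol}$ out of the $T_N$-integral, interchanging summation over $N$ with integration, and expanding $\phi$ as a convergent Fourier series fiberwise) are legitimate; this is precisely what the definition of $\mathcal{A}(LX)$ and the classical Fourier-analytic result of Lemma~\ref{lem3.1} are designed to guarantee, so beyond careful bookkeeping no genuine obstacle remains.
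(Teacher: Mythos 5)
Your proposal is correct and follows essentially the same route as the paper: part (i) by pulling the $T_N$-invariant factors $\pi_{LX}^*(f)$ and $\textrm{hol}$ out of the fiberwise integral and invoking Proposition~\ref{prop3.2}, part (ii) by Fourier-expanding $\phi$ and using orthogonality on $T_M$ to kill all modes except $w=v$, and part (iii) formally from Lemma~\ref{lem3.1}. The only slip is a sign in your Fourier expansion of $\phi$ (the paper's convention is $\phi=\sum_w\widehat{\phi}_x(w)e^{-\sqrt{-1}\langle y,w\rangle}$, so that multiplying by $\textrm{hol}^{-1}=e^{\sqrt{-1}\langle y,v\rangle}$ isolates $w=v$), which does not affect the argument.
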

\begin{proof}
Let $f\in\mathcal{A}(LX)$. Then, for any $v\in N$,
$f(p,v)=f_ve^{-\langle x,v\rangle}$ for some constant
$f_v\in\mathbb{C}$. By observing that both functions $\pi_{LX}^*(f)$
and $\textrm{hol}$ are $T_N$-invariant functions on $LX\times_P Y$,
we have
\begin{eqnarray*}
\mathcal{F}(fe^{\sqrt{-1}\omega_X}) & = &
(-2\pi\sqrt{-1})^{-n}\int_{N\times
T_N}\pi_{LX}^*(fe^{\sqrt{-1}\omega_X})\wedge
e^{\sqrt{-1}F}\textrm{hol}\\
& = & (-2\pi\sqrt{-1})^{-n}\sum_{v\in
N}\pi_{LX}^*(f)\cdot\textrm{hol}
\int_{T_N}\pi_{LX}^*(e^{\sqrt{-1}\omega_X})\wedge e^{\sqrt{-1}F}\\
& = & (-2\pi\sqrt{-1})^{-n}\Bigg(\sum_{v\in
N}\pi_{LX}^*(f)\cdot\textrm{hol}\Bigg)
\Bigg(\int_{T_N}\pi_X^*(e^{\sqrt{-1}\omega_X})\wedge
e^{\sqrt{-1}F}\Bigg).
\end{eqnarray*}
The last equality is due to the fact that the forms
$\pi_{LX}^*(e^{\sqrt{-1}\omega_X})=\pi_X^*(e^{\sqrt{-1}\omega_X})$
and $e^F$ are independent of $v\in N$. By Proposition~\ref{prop3.2},
the second factor is given by
$$\int_{T_N}\pi_X^*(e^{\sqrt{-1}\omega_X})\wedge e^{\sqrt{-1}F}
=(-2\pi\sqrt{-1})^n\mathcal{F}^{\textrm{sf}}(e^{\sqrt{-1}\omega_X})=(-2\pi\sqrt{-1})^n\Omega_Y,$$
while the first factor is the function on $Y$ given, for
$x=(x_1,\ldots,x_n)\in P$ and $y=(y_1,\ldots,y_n)\in T_M$, by
\begin{eqnarray*}
\Bigg(\sum_{v\in N}\pi_{LX}^*(f)\cdot\textrm{hol}\Bigg)(x,y) & = &
\sum_{v\in N} f_ve^{-\langle x,v\rangle}e^{-\sqrt{-1}\langle y,v\rangle}\\
& = & \sum_{v\in N} f_vz^v\\
& = & \widehat{f}(z),
\end{eqnarray*}
where
$z=(z_1,\ldots,z_n)=(\exp(-x_1-\sqrt{-1}y_1),\ldots,\exp(-x_n-\sqrt{-1}y_n))\in
Y$. Hence
$\mathcal{F}(fe^{\sqrt{-1}\omega_X})=\widehat{f}\Omega_Y\in\mathcal{O}(Y)\Omega_Y$.
This proves (i).

For (ii), expand $\phi\in\mathcal{O}(Y)$ into a fiberwise Fourier
series
$$\phi(z)=\sum_{w\in N}\widehat{\phi}_x(w)e^{-\sqrt{-1}\langle
y,w\rangle},$$ where $x,y,z$ are as before. Then
\begin{eqnarray*}
\mathcal{F}^{-1}(\phi\Omega_Y) & = &
(-2\pi\sqrt{-1})^{-n}\int_{T_M}\pi_Y^*(\phi\Omega_Y)\wedge e^{-\sqrt{-1}F}\textrm{hol}^{-1}\\
& = & (-2\pi\sqrt{-1})^{-n}\sum_{w\in
N}\Bigg(\widehat{\phi}_x(w)\int_{T_M}e^{\sqrt{-1}\langle
y,v-w\rangle}\pi_Y^*(\Omega_Y)\wedge e^{-\sqrt{-1}F}\Bigg).
\end{eqnarray*}
Here comes the key observation: If $v-w\neq0\in N$, then, using (the
proof of) the second part of Proposition~\ref{prop3.2}, we have
\begin{eqnarray*}
&   & \int_{T_M} e^{\sqrt{-1}\langle
y,v-w\rangle}\pi_Y^*(\Omega_Y)\wedge e^{-\sqrt{-1}F}\\
& = & \int_{T_M} e^{\sqrt{-1}\langle
y,v-w\rangle}\Bigg(\bigwedge_{j=1}^n(-dx_j-\sqrt{-1}dy_j)\Bigg)\wedge
e^{\sum_{j=1}^n dy_j\wedge du_j}\\
& = & (-\sqrt{-1})^n
e^{\sqrt{-1}\omega_X}\int_{T_M}e^{\sqrt{-1}\langle
y,v-w\rangle} dy_1\wedge\ldots\wedge dy_n\\
& = & 0.
\end{eqnarray*}
Hence,
\begin{eqnarray*}
\mathcal{F}^{-1}(\phi\Omega_Y) & = &
(-2\pi\sqrt{-1})^{-n}\widehat{\phi}_x(v)\int_{T_M}\pi_Y^*(\Omega_Y)\wedge
e^{-\sqrt{-1}F}\\
& = &
\widehat{\phi}\Big((\mathcal{F}^{\textrm{sf}})^{-1}(\Omega_Y)\Big)
=\widehat{\phi}e^{\omega_X}\in\mathcal{A}(LX)e^{\omega_X},
\end{eqnarray*}
again by Proposition~\ref{prop3.2}.

(iii) follows from (i), (ii) and Lemma~\ref{lem3.1}.
\end{proof}
We will, again by abuse of notations, also use
$\mathcal{F}:\mathcal{A}(LX)\rightarrow\mathcal{O}(Y)$ to denote the
process of taking fiberwise Fourier series:
$\mathcal{F}(f):=\widehat{f}$ for $f\in\mathcal{A}(LX)$. Similarly,
we use $\mathcal{F}^{-1}:\mathcal{O}(Y)\rightarrow\mathcal{A}(LX)$
to denote the process of taking fiberwise Fourier coefficients:
$\mathcal{F}^{-1}(\phi):=\widehat{\phi}$ for
$\phi\in\mathcal{O}(Y)$. To which meanings of the symbols
$\mathcal{F}$ and $\mathcal{F}^{-1}$ are we referring will be clear
from the context.

We can now prove the first part of Theorem~\ref{main_thm}, as a
corollary of Theorem~\ref{thm3.1}.
\begin{thm}[=part 1. of Theorem~\ref{main_thm}]\label{thm3.2}
The SYZ mirror transformation of the function $\Phi_q\in
C^\infty(LX)$, defined in terms of the counting of Maslov index two
holomorphic discs in $\bar{X}$ with boundary in Lagrangian torus
fibers, is the exponential of the superpotential $W$ on the mirror
manifold $Y$, i.e.
$$\mathcal{F}(\Phi_q)=e^W.$$
Conversely, we have
$$\mathcal{F}^{-1}(e^W)=\Phi_q.$$
Furthermore, we can incorporate the symplectic structure
$\omega_X=\omega_{\bar{X}}|_X$ on $X$ to give the holomorphic volume
form on the Landau-Ginzburg model $(Y,W)$ through the SYZ mirror
transformation $\mathcal{F}$, and vice versa, in the following
sense:
\begin{equation*}
\mathcal{F}(\Phi_q e^{\sqrt{-1}\omega_X})=e^W\Omega_Y,\
\mathcal{F}^{-1}(e^W\Omega_Y)=\Phi_q e^{\sqrt{-1}\omega_X}.
\end{equation*}
\end{thm}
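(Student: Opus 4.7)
The plan is to reduce everything to Theorem~\ref{thm3.1} by showing that $\Phi_q$ is admissible in the sense of Definition~\ref{def3.3} and that its fiberwise Fourier series is exactly $e^W$. Once this is verified, parts (i) and (ii) of Theorem~\ref{thm3.1} immediately produce $\mathcal{F}(\Phi_q e^{\sqrt{-1}\omega_X}) = e^W \Omega_Y$ and $\mathcal{F}^{-1}(e^W \Omega_Y) = \Phi_q e^{\sqrt{-1}\omega_X}$, while the corresponding identities without the symplectic structure follow from the interpretation of $\mathcal{F}$ and $\mathcal{F}^{-1}$ as fiberwise Fourier series and fiberwise Fourier coefficients. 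Thus the entire theorem collapses to a single explicit Fourier computation.

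First I would invoke Remark~\ref{rmk2.1}(3), which uses Cho--Oh's symplectic area formula (\ref{area}) together with the choice of $\lambda_1=\cdots=\lambda_n=0$ and $\lambda_{n+a}=\log q_a$ to rewrite
\begin{equation*}
\Phi_q(p,v) = (\Phi_q)_v\, e^{-\langle x,v\rangle}, \qquad (\Phi_q)_v = \sum_{\substack{k_1,\ldots,k_d\in\mathbb{Z}_{\geq 0}\\ \sum k_i v_i = v}} \frac{q_1^{k_{n+1}}\cdots q_l^{k_d}}{k_1!\cdots k_d!},
\end{equation*}
where $x=\mu(p)$. The constant $(\Phi_q)_v$ depends only on $v\in N$, so $\Phi_q$ is $T_N$-invariant and has the shape required by Definition~\ref{def3.3}.

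Next I would compute the formal fiberwise Fourier series
\begin{equation*}
\widehat{\Phi_q} = \sum_{v\in N} (\Phi_q)_v\, z^v
\end{equation*}
by interchanging the two summations (justified by absolute convergence, which I will address below) and factoring over the index $i$:
\begin{equation*}
\widehat{\Phi_q} = \sum_{k_1,\ldots,k_d\geq 0} \prod_{i=1}^d \frac{\bigl(e^{\lambda_i}z^{v_i}\bigr)^{k_i}}{k_i!} = \prod_{i=1}^d \exp\!\bigl(e^{\lambda_i} z^{v_i}\bigr) = \exp\!\left(\sum_{i=1}^d e^{\lambda_i}z^{v_i}\right) = e^W.
\end{equation*}
Here I use that under our normalization $q_a = e^{\lambda_{n+a}}$ and $\lambda_i=0$ for $i\leq n$, so the weights $q_1^{k_{n+1}}\cdots q_l^{k_d}$ combine with the monomials to give exactly $\prod_i (e^{\lambda_i} z^{v_i})^{k_i}$.

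The only step that needs a brief estimate is verifying admissibility, that is, that the series $\widehat{\Phi_q}$ converges and is analytic on all of $Y$. This is the mild technical hurdle, but it is already essentially handled by the bound in Remark~\ref{rmk2.1}(3): on any compact subset of $Y$, each $|e^{\lambda_i}z^{v_i}|$ is bounded, so the product of the $d$ absolutely convergent exponential series dominates termwise, giving uniform absolute convergence and hence analyticity, and also justifying the interchange of summations above. With admissibility and $\widehat{\Phi_q}=e^W$ in hand, Theorem~\ref{thm3.1}(i) gives $\mathcal{F}(\Phi_q e^{\sqrt{-1}\omega_X}) = e^W \Omega_Y$; Theorem~\ref{thm3.1}(ii) together with $\widehat{\widehat{\Phi_q}}=\Phi_q$ from Lemma~\ref{lem3.1} gives the inverse formula $\mathcal{F}^{-1}(e^W\Omega_Y) = \Phi_q e^{\sqrt{-1}\omega_X}$; and the two identities $\mathcal{F}(\Phi_q)=e^W$, $\mathcal{F}^{-1}(e^W)=\Phi_q$ are then just the versions of these statements without the top forms $e^{\sqrt{-1}\omega_X}$ and $\Omega_Y$, i.e.\ the fiberwise Fourier series/coefficient statements of Lemma~\ref{lem3.1}.
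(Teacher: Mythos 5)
Your proposal is correct and follows essentially the same route as the paper: reduce to Theorem~\ref{thm3.1} by checking that $\Phi_q$ is admissible via Cho--Oh's area formula, then compute the fiberwise Fourier series by interchanging the sums to factor it as $\prod_{i=1}^d\exp(e^{\lambda_i}z^{v_i})=e^W$. The only difference is that you make the convergence justification slightly more explicit than the paper does, which is a harmless elaboration.
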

\begin{proof}
By Theorem~\ref{thm3.1}, we only need to show that $\Phi_q\in
C^\infty(LX)$ is admissible and
$\mathcal{F}(\Phi_q)=\widehat{\Phi}_q=e^W\in\mathcal{O}(Y)$. Recall
that, for $(p,v)\in LX=X\times N$ and $x=\mu(p)\in P$,
$$\Phi_q(p,v)=\sum_{\beta\in\pi_2^+(\bar{X},L_x),\
\partial\beta=v}\frac{1}{w(\beta)}e^{-\frac{1}{2\pi}\int_\beta\omega_{\bar{X}}}.$$
For $\beta\in\pi_2^+(\bar{X},L_x)$ with $\partial\beta=v$, by the
symplectic area formula (\ref{area}) of Cho-Oh, we have
$\int_\beta\omega_{\bar{X}}=2\pi\langle x,v\rangle+\textrm{const}$.
So $\Phi_q(p,v)$ is of the form $\textrm{const}\cdot e^{-\langle
x,v\rangle}$. Now,
\begin{eqnarray*}
\sum_{v\in N}\Phi_q(p,v)\textrm{hol}_{\nabla_y}(v) & = & \sum_{v\in
N}\Bigg(\sum_{\beta\in\pi_2^+(\bar{X},L_x),\
\partial\beta=v}\frac{1}{w(\beta)}e^{-\frac{1}{2\pi}\int_\beta\omega_{\bar{X}}}\Bigg)e^{-\sqrt{-1}\langle y,v\rangle}\\
& = & \sum_{k_1,\ldots,k_d\in\mathbb{Z}_{\geq0}}\frac{1}{k_1!\ldots
k_d!}e^{-\sum_{i=1}^d k_i(\langle
x,v_i\rangle-\lambda_i)}e^{-\sum_{i=1}^d k_i\sqrt{-1}\langle
y,v_i\rangle}\\
& = & \prod_{i=1}^d\Bigg(\sum_{k_i=0}^\infty\frac{1}{k_i!}
\big(e^{\lambda_i-\langle
x+\sqrt{-1}y,v_i\rangle}\big)^{k_i}\Bigg)\\
& = & \prod_{i=1}^d \exp(e^{\lambda_i}z^{v_i})=e^W.
\end{eqnarray*}
This shows that $\Phi_q$ is admissible and $\widehat{\Phi}_q=e^W$.
\end{proof}
The form $\Phi_q e^{\sqrt{-1}\omega_X}\in\Omega^*(LX)$ can be viewed
as \textit{the symplectic structure modified by quantum corrections
from Maslov index two holomorphic discs in $\bar{X}$ with boundaries
on Lagrangian torus fibers}. That we call $e^W\Omega_Y$ the
holomorphic volume form of the Landau-Ginzburg model $(Y,W)$ can be
justified in several ways. For instance, in the theory of
singularities, one studies the complex oscillating integrals
$$I=\int_{\Gamma} e^{\frac{1}{\hbar}W}\Omega_Y,$$
where $\Gamma$ is some real $n$-dimensional cycle in $Y$ constructed
by the Morse theory of the function $\textrm{Re($W$)}$. These
integrals are reminiscent of the periods of holomorphic volume forms
on Calabi-Yau manifolds, and they satisfy certain Picard-Fuchs
equations (see, for example, Givental~\cite{Givental97b}). Hence,
one may think of $e^W\Omega_Y$ as playing the same role as the
holomorphic volume form on a Calabi-Yau manifold.

\subsection{Quantum cohomology vs. Jacobian ring}\label{subsec3.3}

The purpose of this subsection is to give a proof of the second part
of Theorem~\ref{main_thm}. Before that, let us recall the definition
of the Jacobian ring $Jac(W)$. Recall that the SYZ mirror manifold
$Y$ is given by the bounded domain
$$Y=\{(z_1,\ldots,z_n)\in(\mathbb{C}^*)^n:|e^{\lambda_i}z^{v_i}|<1,\ i=1,\ldots,d\},$$
in $(\mathbb{C}^*)^n$, and the superpotential
$W:Y\rightarrow\mathbb{C}$ is the Laurent polynomial
$$W=e^{\lambda_1}z^{v_1}+\ldots+e^{\lambda_d}z^{v_d},$$
where, as before, $z^v$ denotes the monomial $z_1^{v^1}\ldots
z_n^{v^n}$ if $v=(v^1,\ldots,v^n)\in N=\mathbb{Z}^n$. Let
$\mathbb{C}[Y]=\mathbb{C}[z_1^{\pm1},\ldots,z_n^{\pm1}]$ be the
$\mathbb{C}$-algebra of Laurent polynomials restricted to $Y$. Then
the Jacobian ring $Jac(W)$ of $W$ is defined as the quotient of
$\mathbb{C}[Y]$ by the ideal generated by the logarithmic
derivatives of $W$:
\begin{eqnarray*}
Jac(W) & = & \mathbb{C}[Y]\Big/\Big\langle z_j\frac{\partial
W}{\partial z_j}:j=1,\ldots,n\Big\rangle\\
& = &\mathbb{C}[z_1^{\pm1},\ldots,z_n^{\pm1}]\Big/\Big\langle
z_j\frac{\partial W}{\partial z_j}:j=1,\ldots,n\Big\rangle.
\end{eqnarray*}
The second part of Theorem~\ref{main_thm} is now an almost immediate
corollary of Proposition~\ref{prop2.2} and Theorem~\ref{thm3.3}.
\begin{thm}\label{thm3.3}
The SYZ mirror transformation $\mathcal{F}$ gives an isomorphism
$$\mathcal{F}:\mathbb{C}[\Psi_1^{\pm1},\ldots,\Psi_n^{\pm1}]/\mathcal{L}\rightarrow
Jac(W)$$ of $\mathbb{C}$-algebras. Hence, $\mathcal{F}$ induces a
natural isomorphism of $\mathbb{C}$-algebras between the small
quantum cohomology ring of $\bar{X}$ and the Jacobian ring of $W$:
\begin{eqnarray*}
\mathcal{F}:QH^*(\bar{X})\overset{\cong}{\longrightarrow}Jac(W),
\end{eqnarray*}
provided that $\bar{X}$ is a product of projective spaces.
\end{thm}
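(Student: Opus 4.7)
The plan is to factor the desired isomorphism as the composition of the isomorphism $QH^*(\bar{X}) \cong \mathbb{C}[\Psi_1^{\pm 1},\ldots,\Psi_n^{\pm 1}]/\mathcal{L}$ from Proposition \ref{prop2.2} with a ring isomorphism $\mathcal{F}:\mathbb{C}[\Psi_1^{\pm 1},\ldots,\Psi_n^{\pm 1}]/\mathcal{L} \overset{\cong}{\to} Jac(W)$ induced by the SYZ mirror transformation. The first step is a direct Fourier computation of $\mathcal{F}(\Psi_i)$: since $\Psi_i(p,v)$ is supported on the single $N$-slice $v=v_i$, its fiberwise Fourier series reduces to a single term, and Cho--Oh's area formula \eqref{area} gives
$$\mathcal{F}(\Psi_i)(z)\,=\,e^{-\frac{1}{2\pi}\int_{\beta_i}\omega_{\bar X}}\,e^{-\sqrt{-1}\langle y,v_i\rangle}\,=\,e^{\lambda_i}z^{v_i}, \qquad i=1,\ldots,d.$$
In particular $\mathcal{F}(\Psi_j)=z_j$ for $j=1,\ldots,n$, thanks to the normalisation $v_j=e_j$, $\lambda_j=0$. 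Combined with Lemma \ref{lem3.1}, which takes the convolution product to ordinary multiplication, this already upgrades $\mathcal{F}$ to a ring isomorphism
$$\mathcal{F}:\mathbb{C}[\Psi_1^{\pm1},\ldots,\Psi_n^{\pm1}]\overset{\cong}{\longrightarrow}\mathbb{C}[z_1^{\pm1},\ldots,z_n^{\pm1}]\,=\,\mathbb{C}[Y].$$

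Next, I would match $\mathcal{F}(\mathcal{L})$ with the Jacobian ideal $\langle z_j\,\partial W/\partial z_j\rangle$. A $\mathbb{Z}$-basis for the lattice of linear equivalences among toric divisors is given by the $n$ elements $\sum_{i=1}^d \langle e_j^*,v_i\rangle D_i$ for $j=1,\ldots,n$, where $e_1^*,\ldots,e_n^*\in M$ is the basis dual to $e_1,\ldots,e_n\in N$. Under the correspondence $D_i \leftrightarrow \Psi_i \leftrightarrow e^{\lambda_i}z^{v_i}$, these generators transform into
$$\sum_{i=1}^d v_i^j\,e^{\lambda_i}z^{v_i}\,=\,z_j\,\frac{\partial W}{\partial z_j},\qquad j=1,\ldots,n,$$
which are precisely the generators of the Jacobian ideal. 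Hence $\mathcal{F}(\mathcal{L})=\langle z_j\,\partial W/\partial z_j:j=1,\ldots,n\rangle$, and passing to the quotients yields the desired isomorphism $\mathbb{C}[\Psi_1^{\pm 1},\ldots,\Psi_n^{\pm 1}]/\mathcal{L}\cong Jac(W)$. Composing with Proposition \ref{prop2.2} then gives the theorem.

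The step that I expect to require the most bookkeeping is the internal consistency of the substitution $\Psi_{n+a}\mapsto q_a(\Psi_1^{-1})^{Q_{1a}}\star\cdots\star(\Psi_n^{-1})^{Q_{na}}$ (derived in the proof of Proposition \ref{prop2.2}) with the direct Fourier evaluation $\mathcal{F}(\Psi_{n+a})=e^{\lambda_{n+a}}z^{v_{n+a}}$: both computations must produce the same Laurent monomial in $\mathbb{C}[Y]$. This reduces to the identities $\lambda_{n+a}=-r_a$ and $v_{n+a}=-\sum_{j=1}^n Q_{ja}\,e_j$, which follow directly from the normalisations fixed in Remark \ref{rmk2.0} together with the defining relation $\sum_{i=1}^d Q_{ia}v_i=0$ of the lattice $K=H_2(\bar{X},\mathbb{Z})$. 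Once these verifications are in place, the remainder of the argument is essentially formal.
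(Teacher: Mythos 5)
Your proposal is correct and follows essentially the same route as the paper's proof: compute $\mathcal{F}(\Psi_i)=e^{\lambda_i}z^{v_i}$ (so $\mathcal{F}(\Psi_j)=z_j$ for $j\leq n$ under the chosen normalisation), use Lemma~\ref{lem3.1} to get a ring isomorphism $\mathbb{C}[\Psi_1^{\pm1},\ldots,\Psi_n^{\pm1}]\cong\mathbb{C}[z_1^{\pm1},\ldots,z_n^{\pm1}]$, and identify $\mathcal{L}$ with the Jacobian ideal via $z_j\,\partial W/\partial z_j=\sum_i v_i^j e^{\lambda_i}z^{v_i}$. The only cosmetic difference is that you push $\mathcal{L}$ forward while the paper pulls the Jacobian ideal back with $\mathcal{F}^{-1}$; the content is the same.
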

\begin{proof}
The functions $\Psi_1,\Psi_1^{-1},\ldots,\Psi_n,\Psi_n^{-1}$ are all
admissible, so $\mathbb{C}[\Psi_1^{\pm1},\ldots,\Psi_n^{\pm1}]$ is a
subalgebra of $\mathcal{A}(LX)$. It is easy to see that, for
$i=1,\ldots,d$, the SYZ mirror transformation
$\mathcal{F}(\Psi_i)=\widehat{\Psi}_i$ of $\Psi_i$ is nothing but
the monomial $e^{\lambda_i}z^{v_i}$. By our choice of the polytope
$\bar{P}\subset M_\mathbb{R}$, $v_1=e_1,\ldots,v_n=e_n$ is the
standard basis of $N=\mathbb{Z}^n$ and
$\lambda_1=\ldots=\lambda_n=0$. Hence,
$$\mathcal{F}(\Psi_i)=z_i,$$
for $i=1,\ldots,n$, and the induced map
$$\mathcal{F}:\mathbb{C}[\Psi_1^{\pm1},\ldots,\Psi_n^{\pm1}]\rightarrow\mathbb{C}[z_1^{\pm1},\ldots,z_n^{\pm1}]$$
is an isomorphism of $\mathbb{C}$-algebras. Now, notice that
$$z_j\frac{\partial W}{\partial z_j}
=\sum_{i=1}^d z_j\frac{\partial}{\partial
z_j}(e^{\lambda_i}z_1^{v_i^1}\ldots z_n^{v_i^n})=\sum_{i=1}^d v_i^j
e^{\lambda_i}z_1^{v_i^1}\ldots z_n^{v_i^n}=\sum_{i=1}^d v_i^j
e^{\lambda_i}z^{v_i},$$ for $j=1,\ldots,n$. The inverse SYZ
transformation of $z_j\frac{\partial W}{\partial z_j}$ is thus given
by
$$\mathcal{F}^{-1}(z_j\frac{\partial W}{\partial z_j})
=\widehat{\sum_{i=1}^d v_i^je^{\lambda_i}z^{v_i}}=\sum_{i=1}^d
v_i^j\Psi_i.$$ Thus,
$$\mathcal{F}^{-1}\Big(\Big\langle
z_j\frac{\partial W}{\partial
z_j}:j=1,\ldots,n\Big\rangle\Big)=\mathcal{L},$$ is the ideal in
$\mathbb{C}[\Psi_1^{\pm1},\ldots,\Psi_n^{\pm1}]$ generated by linear
equivalences. The result follows.
\end{proof}

\section{Examples}\label{sec4}

In this section, we give some examples to illustrate our results.\\

\noindent\textbf{Example 1. $\bar{X}=\mathbb{C}P^2$.} In this case,
$N=\mathbb{Z}^2$. The primitive generators of the 1-dimensional
cones of the fan $\Sigma$ defining $\mathbb{C}P^2$ are given by
$v_1=(1,0),v_2=(0,1),v_3=(-1,-1)\in N$, and the polytope
$\bar{P}\subset M_\mathbb{R}\cong\mathbb{R}^2$ we chose is defined
by the inequalities
$$x_1\geq0,\ x_2\geq0,\ x_1+x_2\leq t,$$
where $t>0$. See Figure 4.1 below.
\begin{figure}[ht]
\setlength{\unitlength}{1mm}
\begin{picture}(100,35)
\put(10,2){\vector(0,1){35}} \put(10,2){\vector(1,0){35}}
\curve(10,32, 40,2) \curve(40,2.8, 40,1.2) \put(40,-1){$t$}
\curve(10.8,32, 9.2,32) \put(7.8,31){$t$} \put(8,0){0}
\put(13,11){$\bar{P}\subset M_\mathbb{R}$} \put(5.3,14.5){$D_1$}
\put(23,-1.3){$D_2$} \put(24.9,17.9){$D_3$} \curve(75,17, 93,17)
\curve(75,17, 75,35) \curve(75,17, 60,2)
\put(74.15,16.15){$\bullet$} \put(71,16){$\xi$} \put(87,18){$E_1$}
\put(75.5,30){$E_2$} \put(67,6.3){$E_3$} \put(75,4){$(\Gamma_3,h)$
in $N_\mathbb{R}$} \put(44,-3){Figure 4.1}
\end{picture}
\end{figure}

The mirror manifold $Y$ is given by
\begin{eqnarray*}
Y & = & \{(Z_1,Z_2,Z_3)\in\mathbb{C}^3:Z_1Z_2Z_3=q, |Z_i|<1,\ i=1,2,3\}\\
  & = & \{(z_1,z_2)\in(\mathbb{C}^*)^2:|z_1|<1, |z_2|<1,
  |\frac{q}{z_1z_2}|<1\},
\end{eqnarray*}
where $q=e^{-t}$ is the K\"{a}hler parameter, and, the
superpotential $W:Y\rightarrow\mathbb{C}$ can be written, in two
ways, as
$$W=Z_1+Z_2+Z_3=z_1+z_2+\frac{q}{z_1z_2}.$$
In terms of the coordinates $Z_1,Z_2,Z_3$, the Jacobian ring
$Jac(W)$ is given by
\begin{eqnarray*}
Jac(W) & = & \mathbb{C}[Z_1,Z_2,Z_3]\big/\big\langle
Z_1-Z_3,Z_2-Z_3,Z_1Z_2Z_3-q\big\rangle\\
& \cong & \mathbb{C}[Z]\big/\big\langle Z^3-q\big\rangle.
\end{eqnarray*}
There are three toric prime divisors $D_1,D_2,D_3$, which are
corresponding to the three admissible functions
$\Psi_1,\Psi_2,\Psi_3:LX\rightarrow\mathbb{R}$ defined by
\begin{eqnarray*}
\Psi_1(p,v) & = & \left\{ \begin{array}{ll}
                         e^{-x_1} & \textrm{if $v=(1,0)$}\\
                         0 & \textrm{otherwise,}
                         \end{array} \right.\\
\Psi_2(p,v) & = & \left\{ \begin{array}{ll}
                         e^{-x_2} & \textrm{if $v=(0,1)$}\\
                         0 & \textrm{otherwise,}
                         \end{array} \right.\\
\Psi_3(p,v) & = & \left\{ \begin{array}{ll}
                         e^{-(t-x_1-x_2)} & \textrm{if $v=(-1,-1)$}\\
                         0 & \textrm{otherwise,}
                         \end{array} \right.
\end{eqnarray*}
for $(p,v)\in LX$ and where $x=\mu(p)\in P$, respectively. The small
quantum cohomology ring of $\mathbb{C}P^2$ has the following
presentation:
\begin{eqnarray*}
QH^*(\mathbb{C}P^2) & = & \mathbb{C}[D_1,D_2,D_3]\big/\big\langle
D_1-D_3,D_2-D_3,D_1\ast D_2\ast D_3-q\big\rangle\\
& \cong & \mathbb{C}[H]\big/\big\langle H^3-q\big\rangle,
\end{eqnarray*}
where $H\in H^2(\mathbb{C}P^2,\mathbb{C})$ is the hyperplane class.
Quantum corrections appear only in one relation, namely,
$$D_1\ast D_2\ast D_3=q.$$
Fix a point $p\in X$. Then the quantum correction is due to the
unique holomorphic curve
$\varphi:(\mathbb{C}P^1;x_1,x_2,x_3,x_4)\rightarrow\mathbb{C}P^2$ of
degree 1 (i.e. a line) with 4 marked points such that
$\varphi(x_4)=p$ and $\varphi(x_i)\in D_i$ for $i=1,2,3$. The
parameterized 3-marked, genus 0 tropical curve corresponding to this
line is $(\Gamma_3;E_1,E_2,E_3;h)$, which is glued from three half
lines emanating from the point $\xi=\textrm{Log}(p)\in N_\mathbb{R}$
in the directions $v_1$, $v_2$ and $v_3$. See Figure 4.1 above.
These half lines are the parameterized Maslov index two tropical
discs $(\Gamma_1,h_i)$, where $h_i(V)=\xi$ an
$h_i(E)=\xi+\mathbb{R}_{\geq0}v_i$, for $i=1,2,3$ (see Figure 2.3).
They are corresponding to the Maslov index two holomorphic discs
$\varphi_1,\varphi_2,\varphi_3:(D^2,\partial
D^2)\rightarrow(\mathbb{C}P^2,L_{\mu(p)})$ which pass through $p$
and intersect the corresponding toric divisors $D_1,D_2,D_3$
respectively.\\

\noindent\textbf{Example 2.
$\bar{X}=\mathbb{C}P^1\times\mathbb{C}P^1$.} The primitive
generators of the 1-dimensional cones of the fan $\Sigma$ defining
$\mathbb{C}P^1\times\mathbb{C}P^1$ are given by
$v_{1,1}=(1,0),v_{2,1}=(-1,0),v_{1,2}=(0,1),v_{2,2}=(0,-1)\in
N=\mathbb{Z}^2$. We choose the polytope $\bar{P}\subset
M_\mathbb{R}=\mathbb{R}^2$ to be defined by the inequalities
$$0\leq x_1\leq t_1,\ 0\leq x_2\leq t_2$$
where $t_1,t_2>0$. See Figure 4.2 below.
\begin{figure}[ht]
\setlength{\unitlength}{1mm}
\begin{picture}(100,36)
\put(2,6){\vector(0,1){31}} \put(2,6){\vector(1,0){42}} \curve(2,32,
39,32) \curve(39,6, 39,32) \curve(39,6.8, 39,5.2)
\put(38,2.4){$t_1$} \curve(3.8,32, 6.2,32) \put(-1,31){$t_2$}
\put(0,4){0} \put(13.8,17.3){$\bar{P}\subset M_\mathbb{R}$}
\put(-4.5,18){$D_{1,1}$} \put(39.5,18){$D_{2,1}$}
\put(17.5,3.1){$D_{1,2}$} \put(17.5,33.2){$D_{2,2}$} \curve(57,23,
103,23) \curve(85,6, 85,35) \put(84.1,22){$\bullet$}
\put(82.8,19.7){$\xi$} \put(80,2.3){$(\Gamma_2,h_2)$}
\put(58.5,20){$(\Gamma_2,h_1)$} \put(92,30){$N_\mathbb{R}$}
\put(42,-2){Figure 4.2}
\end{picture}
\end{figure}

The mirror Landau-Ginzburg model $(Y,W)$ consists of
\begin{eqnarray*}
Y\!\!\!&=&\!\!\!\{(Z_{1,1},Z_{2,1},Z_{1,2},Z_{2,2})\in
\mathbb{C}^4:Z_{1,1}Z_{2,1}=q_1,Z_{1,2}Z_{2,2}=q_2,
|Z_{i,j}|<1,\textrm{ all $i,j$}\}\\
\!\!\!&=&\!\!\!\{(z_1,z_2)\in(\mathbb{C}^*)^2:|z_1|<1,|z_2|<1,
|\frac{q_1}{z_1}|<1,|\frac{q_2}{z_2}|<1\},
\end{eqnarray*}
where $q_1=e^{-t_1}$ and $q_2=e^{-t_2}$ are the K\"{a}hler
parameters, and
$$W=Z_{1,1}+Z_{2,1}+Z_{1,2}+Z_{2,2}=z_1+\frac{q_1}{z_1}+z_2+\frac{q_2}{z_2}.$$
The Jacobian ring $Jac(W)$ is given by
\begin{eqnarray*}
Jac(W) & = &
\frac{\mathbb{C}[Z_{1,1},Z_{2,1},Z_{1,2},Z_{2,2}]}{\big\langle
Z_{1,1}-Z_{2,1},Z_{1,2}-Z_{2,2},Z_{1,1}Z_{2,1}-q_1,Z_{1,2}Z_{2,2}-q_2\big\rangle}\\
& \cong & \mathbb{C}[Z_1,Z_2]\big/\big\langle
Z_1^2-q_1,Z_2^2-q_2\big\rangle.
\end{eqnarray*}
The four toric prime divisors $D_{1,1},D_{2,1},D_{1,2},D_{2,2}$
correspond respectively to the four admissible functions
$\Psi_{1,1},\Psi_{2,1},\Psi_{1,2},\Psi_{2,2}:LX\rightarrow\mathbb{C}$
defined by
\begin{eqnarray*}
\Psi_{1,1}(p,v) & = & \left\{ \begin{array}{ll}
                         e^{-x_1} & \textrm{if $v=(1,0)$}\\
                         0 & \textrm{otherwise,}
                         \end{array} \right.\\
\Psi_{2,1}(p,v) & = & \left\{ \begin{array}{ll}
                         e^{-(t_1-x_1)} & \textrm{if $v=(0,-1)$}\\
                         0 & \textrm{otherwise,}
                         \end{array} \right.\\
\Psi_{1,2}(p,v) & = & \left\{ \begin{array}{ll}
                         e^{-x_2} & \textrm{if $v=(0,1)$}\\
                         0 & \textrm{otherwise,}
                         \end{array} \right.\\
\Psi_{2,2}(p,v) & = & \left\{ \begin{array}{ll}
                         e^{-(t_2-x_2)} & \textrm{if $v=(0,-1)$}\\
                         0 & \textrm{otherwise,}
                         \end{array} \right.
\end{eqnarray*}
for $(p,v)\in LX$ and where $x=\mu(p)\in P$. The small quantum
cohomology ring of $\mathbb{C}P^1\times\mathbb{C}P^1$ is given by
\begin{eqnarray*}
QH^*(\mathbb{C}P^1\times\mathbb{C}P^1)\!\!\!&=&\!\!\!
\frac{\mathbb{C}[D_{1,1},D_{2,1},D_{1,2},D_{2,2}]}{\big\langle
D_{1,1}-D_{2,1},D_{1,2}-D_{2,2},D_{1,1}\ast D_{2,1}-q_1,D_{1,2}\ast
D_{2,2}-q_2\big\rangle}\\
\!\!\!&\cong&\!\!\!\mathbb{C}[H_1,H_2]\big/\big\langle
H_1^2-q_1,H_2^2-q_2\big\rangle
\end{eqnarray*}
where $H_1,H_2\in H^2(\mathbb{C}P^1\times\mathbb{C}P^1)$ are the
pullbacks of the hyperplane classes in the first and second factors
respectively. Quantum corrections appear in two relations:
$$D_{1,1}\ast D_{2,1}=q_1\textrm{ and }D_{1,2}\ast D_{2,2}=q_2.$$
Let us focus on the first one, as the other one is similar. For any
$p\in X$, there are two Maslov index two holomorphic discs
$\varphi_{1,1},\varphi_{2,1}:(D^2,\partial
D^2)\rightarrow(\mathbb{C}P^1\times\mathbb{C}P^1,L_{\mu(p)})$
intersecting the corresponding toric divisors. An interesting
feature of this example is that, since the sum of the boundaries of
the two holomorphic discs is zero as a \textit{chain}, instead of as
a class, in $L_{\mu(p)}$, they glue together \textit{directly} to
give the unique holomorphic curve
$\varphi_1:(\mathbb{C}P^1:x_1,x_2,x_3)\rightarrow\mathbb{C}P^1\times\mathbb{C}P^1$
of degree 1 with $\varphi_1(x_1)\in D_{1,1}$, $\varphi_1(x_2)\in
D_{2,1}$ and $\varphi_1(x_3)=p$. So the relation $D_{1,1}\ast
D_{2,1}=q_1$ is directly corresponding to
$\Psi_{1,1}\star\Psi_{2,1}=q_1\mathbb{1}$, without going through the
corresponding relation in $QH^*_{trop}(\bar{X})$. In other words, we
do not need to go to the tropical world to see the geometry of the
isomorphism $QH^*(\mathbb{C}P^1\times\mathbb{C}P^1)\cong
\mathbb{C}[\Psi_{1,1}^{\pm1},\Psi_{1,2}^{\pm1}]/\mathcal{L}$
(although in Figure 4.2 above, we have still drawn the tropical
lines $h_1$ and $h_2$ passing through $\xi=\textrm{Log}(p)\in
N_\mathbb{R}$).\\

\noindent\textbf{Example 3. $\bar{X}$ is the toric blowup of
$\mathbb{C}P^2$ at one point.} Let $\bar{P}\subset\mathbb{R}^2$ be
the polytope defined by the inequalities
$$x_1\geq0,\ 0\leq x_2\leq t_2,\ x_1+x_2\leq t_1+t_2,$$
where $t_1,t_2>0$.
\begin{figure}[ht]
\setlength{\unitlength}{1mm}
\begin{picture}(100,32)
\put(18,4){\vector(0,1){30}} \put(18,4){\vector(1,0){60}}
\curve(18,28, 44,28) \curve(18,28.1, 44,28.1) \curve(18,27.9,
44,27.9) \put(17.1,27.1){$\bullet$} \put(43.1,27.1){$\bullet$}
\curve(44,28, 68,4) \curve(18.8,28, 17.2,28) \put(14.1,27){$t_2$}
\curve(68,3.2, 68,4.8) \put(63.5,0.6){$t_1+t_2$} \put(16,1){0}
\put(28,14){$\bar{P}\subset M_\mathbb{R}$} \put(13.5,15.5){$D_1$}
\put(38,1){$D_2$} \put(56,17){$D_3$} \put(28,29){$D_4$}
\put(48,30.5){\vector(-1,0){15}} \put(49,29.5){exceptional curve}
\put(80,15){Figure 4.3}
\end{picture}
\end{figure}

The toric Fano manifold $\bar{X}$ corresponding to this trapezoid
(see Figure 4.3 above) is the blowup of $\mathbb{C}P^2$ at a
$T_N$-fixed point. The primitive generators of the 1-dimensional
cones of the fan $\Sigma$ defining $\bar{X}$ are given by
$v_1=(1,0),v_2=(0,1),v_3=(-1,-1),v_4=(0,-1)\in N=\mathbb{Z}^2$. As
in the previous examples, we have the mirror manifold
\begin{eqnarray*}
Y & = &
\{(Z_1,Z_2,Z_3,Z_4)\in\mathbb{C}^4:Z_1Z_3=q_1Z_4,Z_2Z_4=q_2,|Z_i|<1\textrm{for
all $i$}\}\\
& = &
\{(z_1,z_2)\in(\mathbb{C}^*)^2:|z_1|<1,|z_2|<1,|\frac{q_1q_2}{z_1z_2}|<1,|\frac{q_2}{z_2}|<1\},
\end{eqnarray*}
and the superpotential
$$W=Z_1+Z_2+Z_3+Z_4=z_1+z_2+\frac{q_1q_2}{z_1z_2}+\frac{q_2}{z_2},$$
where $q_1=e^{-t_1},q_2=e^{-t_2}$. The Jacobian ring of $W$ is
\begin{equation*}
Jac(W)=\frac{\mathbb{C}[Z_1,Z_2,Z_3,Z_4]}{\big\langle
Z_1-Z_3-Z_4,Z_2-Z_4,Z_1Z_3-q_1Z_4,Z_2Z_4-q_2\big\rangle}
\end{equation*}
and the small quantum cohomology ring of $\bar{X}$ is given by
$$QH^*(\bar{X})=\frac{\mathbb{C}[D_1,D_2,D_3,D_4]}{\big\langle
D_1-D_3-D_4,D_2-D_4,D_1\ast D_3-q_1D_4,D_2\ast
D_4-q_2\big\rangle}.$$ Obviously, we have an isomorphism
$QH^*(\bar{X})\cong Jac(W)$ and, the isomorphism
$$QH^*(\bar{X})\cong\mathbb{C}[\Psi_1^{\pm1},\Psi_2^{\pm1}]/\mathcal{L}$$
in Proposition~\ref{prop2.2} still holds, as we have said in
Remark~\ref{rmk2.3}. However, the geometric picture that we have
derived in Subsection~\ref{subsec2.2} using tropical geometry breaks
down. This is because there is a \textit{rigid} holomorphic curve
contained in the toric boundary $D_\infty$ which \textit{does}
contribute to $QH^*(\bar{X})$. Namely, the quantum relation
$$D_1\ast D_3=q_1D_4$$
is due to the holomorphic curve
$\varphi:\mathbb{C}P^1\rightarrow\bar{X}$ such that
$\varphi(\mathbb{C}P^1)\subset D_4$. This curve is exceptional since
$D_4^2=-1$, and thus cannot be deformed to a curve outside the toric
boundary. See Figure 4.3 above. Hence, it is \textit{not}
corresponding to any tropical curve in $N_\mathbb{R}$. This means
that tropical geometry cannot "see" the curve $\varphi$, and it is
not clear how one could define the tropical analog of the small
quantum cohomology ring in this case.

\section{Discussions}

In this final section, we speculate the possible generalizations of
the results of this paper. The discussion will be rather informal.\\

The proofs of the results in this paper rely heavily on the
classification of holomorphic discs in a toric Fano manifold
$\bar{X}$ with boundary in Lagrangian torus fibers, and on the
explicit nature of toric varieties. Nevertheless, it is still
possible to generalize these results, in particular, the
construction of SYZ mirror transformations, to non-toric situations.
For example, one may consider a \textit{complex flag manifold}
$\bar{X}$, where the Gelfand-Cetlin integrable system provide a
natural Lagrangian torus fibration structure on $\bar{X}$ (see, for
example, Guillemin-Sternberg \cite{GS83}). The base of this
fibration is again an affine manifold with boundary but without
singularities. In fact, there is a \textit{toric degeneration} of
the complex flag manifold $\bar{X}$ to a toric variety, and the base
is nothing but the polytope associated to that toric variety.
Furthermore, the classification of holomorphic discs in a complex
flag manifold $\bar{X}$ with boundary in Lagrangian torus fibers was
recently done by Nishinou-Nohara-Ueda \cite{NNU08}, and, at least
for the full flag manifolds, there is an isomorphism between the
small quantum cohomology ring and the Jacobian ring of the mirror
superpotential (cf. Corollary 12.4 in \cite{NNU08}). Hence, one can
try to construct the SYZ mirror transformations for a complex flag
manifold $\bar{X}$ and prove results like Proposition \ref{prop1.1}
and Theorem \ref{main_thm} as in the toric Fano case.\\

Certainly, the more important (and more ambitious) task is to
generalize the constructions of SYZ mirror transformations to the
most general situations, where the bases of Lagrangian torus
fibrations are affine manifolds with both boundary and
singularities. To do this, the first step is to make the
construction of the SYZ mirror transformations become a local one.
One possible way is the following: Suppose that we have an
$n$-dimensional compact K\"{a}hler manifold $\bar{X}$, together with
an anticanonical divisor $D$. Assume that there is a Lagrangian
torus fibration $\mu:\bar{X}\rightarrow\bar{B}$, where $\bar{B}$ is
a real $n$-dimensional (possibly) singular affine manifold with
boundary $\partial\bar{B}$. We should also have
$\mu^{-1}(\partial\bar{B})=D$. Now let $U\subset
B:=\bar{B}\setminus\partial\bar{B}$ be a small open ball contained
in an affine chart of the nonsingular part of $B$, i.e.
$\mu^{-1}(b)$ is a nonsingular Lagrangian torus in $\bar{X}$ for any
$b\in U$, so that we can identify each fiber $\mu^{-1}(b)$ with
$T^n$ and identify $\mu^{-1}(U)$ with $T^*U/\mathbb{Z}^n\cong
U\times T^n$. Let $N\cong\mathbb{Z}^n$ be the fundamental group of
any fiber $\mu^{-1}(b)$, and consider the $\mathbb{Z}^n$-cover
$L\mu^{-1}(U)=\mu^{-1}(U)\times N$. Locally, the mirror manifold
should be given by the dual torus fibration
$\nu:U\times(T^n)^\vee\rightarrow U$. Denote by $\nu^{-1}(U)$ the
local mirror $U\times(T^n)^\vee$. Then we can define the
\textit{local SYZ mirror transformation}, as before, through the
fiber product $L\mu^{-1}(U)\times_U\nu^{-1}(U)$.
\begin{equation*}
\begin{CD}
  L\mu^{-1}(U)\times_U\nu^{-1}(U)   @> >>   \nu^{-1}(U) \\
  @VV V    @VV \nu V \\
  L\mu^{-1}(U)             @>\mu>>   U
\end{CD}\\
\end{equation*}
Now, fix a reference fiber $L_0=\mu^{-1}(b_0)$. Given $v\in N$,
define a function $\Upsilon_v:L\mu^{-1}U\rightarrow\mathbb{R}$ as
follows. For any point $p\in\mu^{-1}(U)$, let $L_b=\mu^{-1}(b)$ be
the fiber containing $p$, where $b=\mu(p)\in U$. Regard $v$ as an
element in $\pi_1(L_b)$. Consider the 2-chain $\gamma$ in
$\mu^{-1}(U)$ with boundary in $v\cup L_0$, and define
$$\Upsilon_v(p,v)=\exp(-\frac{1}{2\pi}\int_\gamma\omega_{\mu^{-1}(U)}),$$
where $\omega_{\mu^{-1}(U)}=\omega_{\bar{X}}|_{\mu^{-1}(U)}$ is the
restriction of the K\"{a}hler form to $\mu^{-1}(U)$. Also set
$\Upsilon_v(p,w)=0$ for any $w\in N\setminus\{v\}$ (cf. the
discussion after Lemma 2.7 in Auroux \cite{Auroux07}). This is
analog to the definitions of the functions $\Psi_1,\ldots,\Psi_d$ in
the toric Fano case, and it is easy to see that the local SYZ mirror
transformations of these kind of functions give local holomorphic
functions on the local mirror $\nu^{-1}(U)=U\times(T^n)^\vee$. We
expect that these constructions will be sufficient for the purpose
of understanding quantum corrections due to the boundary divisor
$D$. However, to take care of the quantum corrections which arise
from the proper singular Lagrangian fibers (i.e. singular fibers
contained in $X=\mu^{-1}(B)$), one must modify and generalize the
constructions of the local SYZ mirror transformations to the case
where $U\subset B$ contains singular points. For this, new ideas are
needed in order to incorporate the wall-crossing phenomena.\\

\noindent\textbf{Acknowledgements.} We thank the referees for very
useful comments and suggestions. The work of the second author was
partially supported by RGC grants from the Hong Kong Government.


\begin{thebibliography}{99}

\bibitem{Abouzaid05}
M. Abouzaid, \textit{Homogeneous coordinate rings and mirror
symmetry for toric varieties}, Geom. Topol. 10 (2006), 1097--1157
(math.SG/0511644).

\bibitem{Abouzaid06}
\underline{\qquad\quad}, \textit{Morse homology, tropical geometry,
and homological mirror symmetry for toric varieties}, Selecta Math.
(N.S.) 15 (2009), no. 2, 189--270 (math.SG/0610004).

\bibitem{Abreu00}
M. Abreu, \textit{K\"{a}hler geometry of toric manifolds in
symplectic coordinates}, Symplectic and contact topology:
interactions and perspectives (Toronto, ON/Montreal, QC, 2001),
1--24, Fields Inst. Commun., 35, Amer. Math. Soc., Providence, RI,
2003 (math.DG/0004122).

\bibitem{Aluffi97}
\textit{Quantum cohomology at the Mittag-Leffler Institute}, Edited
by P. Aluffi, Scuola Normale Superiore, Pisa, 1997.

\bibitem{Audin04}
M. Audin, \textit{Torus actions on symplectic manifolds}, Second
revised edition. Progress in Mathematics, 93. Birkh\"{a}user Verlag,
Basel, 2004.

\bibitem{Auroux07}
D. Auroux, \emph{Mirror symmetry and T-duality in the complement of
an anticanonical divisor}, J. Gokova Geom. Topol. 1 (2007), 51--91
(arXiv:0706.3207).

\bibitem{AKO04}
D. Auroux, L. Katzarkov and D. Orlov, \textit{Mirror symmetry for
weighted projective planes and their noncommutative deformations},
Ann. of Math. (2) 167 (2008), no. 3, 867--943 (math.AG/0404281).

\bibitem{AKO05}
\underline{\qquad\quad}, \textit{Mirror symmetry for del Pezzo
surfaces: vanishing cycles and coherent sheaves}, Invent. Math.
166 (2006), no. 3, 537--582 (math.AG/0506166).

\bibitem{Batyrev91}
V. Batyrev, \textit{On the classification of smooth projective
toric varieties}, Tohoku Math. J. (2) 43 (1991), no. 4, 569--585.

\bibitem{Batyrev93}
\underline{\qquad\quad}, \textit{Quantum cohomology rings of toric
manifolds}, Journ\'{e}es de G\'{e}om\'{e}trie Alg\'{e}brique
d'Orsay (Orsay, 1992). Asterisque No. 218 (1993), 9--34
(alg-geom/9310004).

\bibitem{CL08}
K.-W. Chan and N.-C. Leung, \emph{On SYZ mirror transformations}, to
appear in Advanced Studies in Pure Mathematics, "New developments in
Algebraic Geometry, Integrable Systems and Mirror Symmetry"
(arXiv:0808.1551).

\bibitem{CO03}
C.-H. Cho and Y.-G. Oh, \textit{Floer cohomology and disc
instantons of Lagrangian torus fibers in Fano toric manifolds},
Asian J. Math. 10 (2006), no. 4, 773--814 (math.SG/0308225).

\bibitem{Edwards79}
R. E. Edwards, \textit{Fourier series. A modern introduction. Vol.
1}, Graduate Texts in Mathematics, 64. Springer-Verlag, New
York-Berlin, 1979.

\bibitem{Fukaya02}
K. Fukaya, \textit{Multivalued Morse theory, asymptotic analysis
and mirror symmetry}, Graphs and patterns in mathematics and
theoretical physics, 205--278, Proc. Sympos. Pure Math., 73, Amer.
Math. Soc., Providence, RI, 2005.

\bibitem{FOOO08a}
K. Fukaya, Y.-G. Oh, H. Ohta and K. Ono, Lagrangian Floer theory on
compact toric manifolds I, to appear in Duke Math. J.
(arXiv:0802.1703).

\bibitem{FOOO08b}
\underline{\qquad\quad}, Lagrangian Floer theory on compact toric
manifolds II: Bulk deformations, preprint, 2008 (arXiv:0810.5654).

\bibitem{Fulton93}
W. Fulton, \textit{Introduction to toric varieties}, Annals of
Mathematics Studies, 131. The William H. Roever Lectures in
Geometry. Princeton University Press, Princeton, NJ, 1993.

\bibitem{Givental94}
A. Givental, \textit{Homological geometry and mirror symmetry},
Proceedings of the International Congress of Mathematicians, Vol.
1, 2 (Z\"{u}rich, 1994), 472--480, Birkh\"{a}user, Basel, 1995.

\bibitem{Givental96}
\underline{\qquad\quad}, \textit{Stationary phase integrals,
quantum Toda lattices, flag manifolds and the mirror conjecture},
Topics in singularity theory, 103--115, Amer. Math. Soc. Transl.
Ser. 2, 180, Amer. Math. Soc., Providence, RI, 1997
(alg-geom/9612001).

\bibitem{Givental97a}
\underline{\qquad\quad}, \textit{A mirror theorem for toric
complete intersections}, Topological field theory, primitive forms
and related topics (Kyoto, 1996), 141--175, Progr. Math., 160,
Birkh\"{a}user Boston, Boston, MA, 1998 (alg-geom/9701016).

\bibitem{Givental97b}
\underline{\qquad\quad}, \textit{A tutorial on quantum
cohomology}, Symplectic geometry and topology (Park City, UT,
1997), 231--264, IAS/Park City Math. Ser., 7, Amer. Math. Soc.,
Providence, RI, 1999.

\bibitem{Gross09}
M. Gross, Mirror symmetry for $\mathbb{P}^2$ and tropical geometry,
preprint, 2009 (arXiv:0903.1378).

\bibitem{GS07}
M. Gross and B. Siebert, \textit{From real affine geometry to
complex geometry}, preprint, 2007 (math.AG/0703822).

\bibitem{Guillemin94}
V. Guillemin, \textit{Moment maps and combinatorial invariants of
Hamiltonian $T^n$-spaces}, Progress in Mathematics, 122.
Birkh\"{a}user Boston, Inc., Boston, MA, 1994.

\bibitem{GS83}
V. Guillemin and S. Sternberg, \textit{The Ge\'{l}fand-Cetlin system
and quantization of the complex flag manifolds}, J. Funct. Anal. 52
(1983), no. 1, 106--128.

\bibitem{Hitchin97}
N. Hitchin, The moduli space of special Lagrangian submanifolds,
Ann. Scuola Norm. Sup. Pisa Cl. Sci. (4) {\bf 25} (1997), no. 3-4,
503--515 (dg-ga/9711002).

\bibitem{HV00}
K. Hori and C. Vafa, \textit{Mirror symmetry}, preprint, 2000
(hep-th/0002222).

\bibitem{Kontsevich98}
M. Kontsevich, \textit{Lectures at ENS}, Paris, Spring 1998, notes
taken by J. Bellaiche, J.-F. Dat, I. Marin, G. Racinet and H.
Randriambololona.

\bibitem{KS04}
M. Kontsevich and Y. Soibelman, \textit{Affine structures and
non-Archimedean analytic spaces}, The unity of mathematics,
321--385, Progr. Math., 244, Birkh\"{a}user Boston, Boston, MA, 2006
(math.AG/0406564).

\bibitem{Leung00}
N.-C. Leung, \textit{Mirror symmetry without corrections}, Comm.
Anal. Geom. 13 (2005), no. 2, 287--331 (math.DG/0009235).

\bibitem{LYZ00}
N.-C. Leung, S.-T. Yau and E. Zaslow, \textit{From special
Lagrangian to Hermitian-Yang-Mills via Fourier-Mukai transform},
Adv. Theor. Math. Phys. 4 (2000), no. 6, 1319--1341
(math.DG/0005118).

\bibitem{Mikhalkin03}
G. Mikhalkin, \textit{Enumerative tropical algebraic geometry in
$\mathbb{R}^2$}, J. Amer. Math. Soc. 18 (2005), no. 2, 313--377
(math.AG/0312530).

\bibitem{Mikhalkin06}
\underline{\qquad\quad}, \textit{Tropical geometry and its
applications}, International Congress of Mathematicians. Vol. II,
827--852, Eur. Math. Soc., Z\"{u}rich, 2006 (math.AG/0601041).

\bibitem{Mikhalkin07}
\underline{\qquad\quad}, \textit{Moduli spaces of rational tropical
curves}, Proceedings of Gokova Geometry-Topology Conference 2006,
39--51, Gokova Geometry/Topology Conference (GGT), Gokova, 2007
(arXiv:0704.0839).

\bibitem{Morrison96}
D. Morrison, \textit{The geometry underlying mirror symmetry}, New
trends in algebraic geometry (Warwick, 1996), 283--310, London Math.
Soc. Lecture Note Ser., 264, Cambridge Univ. Press, Cambridge, 1999
(alg-geom/9608006).

\bibitem{Nishinou06}
T. Nishinou, \textit{Disc counting on toric varieties via tropical
curves}, preprint, 2006 (math.AG/0610660).

\bibitem{NNU08}
T. Nishinou, Y. Nohara and K. Ueda, \textit{Toric degenerations of
Gelfand-Cetlin systems and potential functions}, preprint, 2008
(arXiv:0810.3470).

\bibitem{NS04}
T. Nishinou and B. Siebert, \textit{Toric degenerations of toric
varieties and tropical curves}, Duke Math. J. 135 (2006), no. 1,
1--51 (math.AG/0409060).

\bibitem{Seidel00}
P. Seidel, \textit{More about vanishing cycles and mutation},
Symplectic geometry and mirror symmetry (Seoul, 2000), 429--465,
World Sci. Publ., River Edge, NJ, 2001 (math.SG/0010032).

\bibitem{Siebert95}
B. Siebert, \textit{An update on (small) quantum cohomology},
Mirror symmetry, III (Montreal, PQ, 1995), 279--312, AMS/IP Stud.
Adv. Math., 10, Amer. Math. Soc., Providence, RI, 1999.

\bibitem{ST94}
B. Siebert and G. Tian, \textit{On quantum cohomology rings of
Fano manifolds and a formula of Vafa and Intriligator}, Asian J.
Math. 1 (1997), no. 4, 679--695 (alg-geom/9403010).

\bibitem{Spielberg01}
H. Spielberg, \textit{Multiple quantum products in toric varieties},
Int. J. Math. Math. Sci. 31 (2002), no. 11, 675--686
(math.SG/0107030).

\bibitem{SYZ96}
A. Strominger, S.-T. Yau and E. Zaslow, \textit{Mirror symmetry is
T-duality}, Nuclear Phys. B 479 (1996), no. 1-2, 243--259
(hep-th/9606040).

\bibitem{Ueda04}
K. Ueda, \textit{Homological mirror symmetry for toric del Pezzo
surfaces}, Comm. Math. Phys. 264 (2006), no. 1, 71--85
(math.AG/0411654).

\end{thebibliography}
\end{document}